\theoremstyle{definition}
\newtheorem{theorem}{Theorem}[section]
\newtheorem{ltheorem}{Theorem}[section]
\newtheorem{lcorollary}[ltheorem]{Corollary}
\newtheorem{corollary}[theorem]{Corollary}
\newtheorem{lemma}[theorem]{Lemma}
\newtheorem{definition}[theorem]{Definition}
\newtheorem{proposition}[theorem]{Proposition}
\newtheorem{proposition-definition}[theorem]{Proposition-Definition}
\newtheorem{remark}[theorem]{Remark}
\newtheorem{example}[theorem]{Example}
\newtheorem{question}[theorem]{Question}
\renewcommand\tableofcontents{%
	\null\hfill\textbf{\Large\contentsname}\hfill\null\par
	\@mkboth{\MakeUppercase\contentsname}{\MakeUppercase\contentsname}%
	\@starttoc{toc}%
}
\title{Categorical approach to rigidity of Roe-like algebras of coarse spaces}
\author{K. Krutoy}
\date{\today}
\email{krutoy@imj-prg.fr}
\address{Institut de Math\'ematiques de Jussieu - Paris Rive Gauche (IMJ- PRG) \\ Universit\'e Paris Cit\'e, B\^atiment Sophie Germain, 8 Place Aur\'elie Nemours, 75013 Paris, France}
\begin{document}
	\maketitle
	\begin{abstract}
		We demonstrate that any full and faithful $*$-functor between approximable categories of locally finite coarse spaces induces a coarse embedding between the underlying spaces. Furthermore, we establish a general characterisation of such $*$-functors between approximable categories and prove that the functor associating each locally finite coarse space with its approximable category is full and faithful.
	\end{abstract}
	\section*{Introduction}
	Large-scale geometry studies metric spaces by identifying those that exhibit similar properties when observed at large distances. Specifically, given two metric spaces $(X,d)$ and $(Y, \partial)$, a map $f \colon X \to Y$ is said to be \emph{coarse} if, for every $R \geq 0$, there exists $S \geq 0$ such that
	$$
		\partial(f(x), f(y)) \leq S \quad \text{whenever} \quad d(x,y) \leq R.
	$$
	Coarse maps are precisely those that preserve large-scale geometry. We identify coarse maps that are uniformly close, that is, maps $f,g \colon X \to Y$ satisfying $\sup_{x \in X} \partial(f(x), g(x)) < \infty$. Such maps are called \emph{close}. Consequently, we obtain a category whose objects are metric spaces and whose morphisms are the closeness classes of coarse maps. The classes of coarse maps $f \colon X \to Y$ for which there exist a coarse map $g \colon Y \to X$ satisfying
	$$
		f \circ g \sim \operatorname{id}_Y, \text{ and } \enspace g \circ f \sim \operatorname{id}_X
	$$
	constitute the isomorphisms in this category. These maps are referred to as \emph{coarse equivalences}, and metric spaces related by such maps are said to be \emph{coarsely equivalent}. 
	
	An active research direction in large-scale geometry is geometric group theory, which aims to deduce algebraic properties of groups from the large-scale geometry of their Cayley graphs. Specifically, geometric group theory investigates which algebraic properties of a group are preserved when passing to another group whose Cayley graph is coarsely equivalent to that of the original group. One such property is \emph{virtual nilpotence}, which is equivalent to the polynomial growth of balls in the respective Cayley graph, as demonstrated in the renowned article by Gromov \cite{PMIHES_1981__53__53_0}. Another area within large-scale geometry involves the study of index theory for elliptic operators on Riemannian manifolds, as illustrated, for instance, in \cite{RoeCCaIToCRM}.
	
	\subsection*{Operator algebraic methods in coarse geometry}\
	
	In \cite[page 262]{Niblo1993GeometricGT}, Gromov introduced the translation algebras $T(X)$ of a metric space $(X,d)$. These algebras consist of endomorphisms of a complex vector space that act on a fixed basis in a ``controlled manner''. Let $V$ be a complex vector space with a fixed basis $\mathfrak{b}$. A geometric $\mathbb{C}$-module over $(X,d)$ is a triple $(V, \mathfrak{b}, \phi)$, where
	$$
		\phi \colon \mathfrak{b} \to X
	$$
	is a map such that the preimages of bounded subsets of $X$ are finite. Every endomorphism $T$ of $V$ can be represented as a complex $\mathfrak{b} \times \mathfrak{b}$-matrix $[T_{i,j}]_{i,j \in \mathfrak{b}}$, where each column contains only finitely many nonzero entries. The propagation of $T$ is defined as
	$$
		\operatorname{prop}(T) = \sup \{d(\phi(b_1),\phi(b_2)) \mid (b_1, b_2) \in \mathfrak{b} \times \mathfrak{b}, \enspace T_{b_1, b_2} \neq 0 \}.
	$$
	Endomorphisms of finite propagation can be viewed as matrices whose nonzero entries lie within a tube of fixed width around the diagonal of $\mathfrak{b} \times \mathfrak{b}$. To illustrate this, consider the example $X = \mathbb{Z}$ equipped with the Euclidean metric. Consider a geometric $\mathbb{C}$-module that consists of a complex vector space whose basis is indexed by $\mathbb{Z}$, equipped with a map $\phi$ that assigns to each basis element its index. Here, elements of propagation $0$ are precisely the diagonal matrices. Elements of propagation $\le 1$ correspond to matrices whose nonzero entries are confined within a strip between the first sub-diagonal and the first super-diagonal. In the diagram below, the intensity of the shading represents the magnitude of propagation.
\vspace{0.1cm}
$$
	\begin{tikzpicture}[scale=1] \label{Figure 1}
		
		\node at (1,0.8)[]{};
		
		\draw[fill=black!60, draw opacity=0] (1, 7) -- (1,1) -- (1,1) -- (7,1) -- (7,7) -- (7, 7) -- cycle;
		\draw[fill=black!50, draw opacity=0] (1, 7) -- (1,2) -- (2,1) -- (7,1) -- (7,6) -- (6, 7) -- cycle;
		\draw[fill=black!40, draw opacity=0] (1, 7) -- (1,3) -- (3,1) -- (7,1) -- (7,5) -- (5, 7) -- cycle;
		\draw[fill=black!30, draw opacity=0] (1, 7) -- (1,4) -- (4,1) -- (7,1) -- (7,4) -- (4, 7) -- cycle;
    	\draw[fill=black!20, draw opacity=0] (1, 7) -- (1,5) -- (5,1) -- (7,1) -- (7,3) -- (3, 7) -- cycle;
    	\draw[fill=black!10, draw opacity=0] (1, 7) -- (1,6) -- (6,1) -- (7,1) -- (7,2) -- (2, 7) -- cycle;
    
    	\foreach \x in {1,...,7} \foreach \y in {1,...,7} {
    		\node at (\x,\y)[circle,fill,inner sep=1.5pt]{};
    	}
    
    	\foreach \x in {1,...,6} {
    		\draw (\x,8-\x) -- (\x+1, 8-\x-1);
    	}
  		
  		\node at (8,5.65)[circle,fill,inner sep=1pt]{};
  		\node at (8.3,5.35)[circle,fill,inner sep=1pt]{};
  		\draw (8, 5.65) -- (8.3, 5.35);
  		\node[draw opacity=0] at (10,5.50) { -- propagation 0};
  		
  		\path[draw=black, fill=black!10] (8,4.35) rectangle (8.3,4.65);
  		\node[draw opacity=0] at (10.15,4.50) { -- propagation $\le1$};
  		\path[draw=black, fill=black!20] (8,3.35) rectangle (8.3,3.65);
  		\node[draw opacity=0] at (10.15,3.50) { -- propagation $\le2$};
  		\path[draw=black, fill=black!30] (8,2.35) rectangle (8.3,2.65);
  		\node[draw opacity=0] at (10.15,2.50) { -- propagation $\le3$};
  	\end{tikzpicture}
  $$
 \vspace{0.1cm}
	
	The Gromov translation algebra $T(X)$ of a geometric $\mathbb{C}$-module is a subalgebra of finite propagation endomorphisms of the underlying vector space. These algebras have proven to be instrumental in the study of group amenability \cite{Elek1997TheO} and various other coarse invariants \cite[Chapter 4]{RoeBook}. Notably, these structures extend to a much broader setting; in particular, translation algebras can be defined over arbitrary rings. For further details, we refer the reader to \cite[Chapter 4]{RoeBook}.
	
	Roe introduced Roe algebras \cite{RoeCCaIToCRM} to advance the study of index theory for Dirac-type operators on complete Riemannian manifolds. These algebras serve as analytic analogues of Gromov translation algebras. Specifically, given a metric space $(X,d)$ and a nondegenerate representation $\pi \colon C_0(X) \to \mathcal{B}(H)$ of the algebra of continuous functions vanishing at infinity on a separable, infinite-dimensional Hilbert space, one defines a concrete $C^*$-subalgebra of $\mathcal{B}(H)$ generated by operators $T \in \mathcal{B}(H)$ that satisfy the following conditions:
	\begin{enumerate}
		\item (\emph{Local compactness}) For every compact subset $K \subset X$ the operators $\mathbbm{1}_K T$ and $T \mathbbm{1}_K$ are compact in $\mathcal{B}(H)$;
		\item (\emph{Finite propagation}) The quantity $\operatorname{prop}(T) = \sup \{d(x,y) \mid (x,y) \in \operatorname{Supp}(T)\}$ is finite, where $\operatorname{Supp}(T)$ is a subset of $X \times X$ consisting of points $(x,y)$ such that for all open neighbourhoods $U$ of $x$ and $V$ of $y$ the quantity $\|\mathbbm{1}_U T \mathbbm{1}_V\|$ is nonzero.
	\end{enumerate}
	Here, the nondegenerate representation of $C_0(X)$ serves as the analogue of a geometric $\mathbb{C}$-module. The finite propagation condition directly corresponds to the finite propagation condition of endomorphisms of $\mathbb{C}$-modules, while the local compactness condition replaces the requirement that the preimages of bounded sets under $\phi$ are finite. Notable examples of Roe algebras are uniform Roe algebras. Given a uniformly locally finite metric space $(X,d)$, one may define a Roe algebra associated with the representation of $\ell^{\infty}(X)$ on $\ell^2(X)$ via left multiplication. The resulting algebra is known as the uniform Roe algebra of $(X,d)$. The significance of Roe algebras has been demonstrated in various contexts, including the construction of counterexamples to the Baum--Connes conjecture for groupoid $C^*$-algebras \cite{HLS_CtBCC} and the study of positive scalar curvature on Riemannian manifolds \cite{seyedhosseini2022variant, WillettYu2020}. Furthermore, certain variants of these algebras have been employed as models for topological phases of disordered materials \cite{EM_physics}.
	
	\subsection*{Rigidity problem for Roe algebras}\
	
	It is well-known \cite[Chapter 5]{WillettYu2020} that for two coarsely equivalent metric spaces $(X, d)$ and $(Y, \partial)$ certain Roe algebras associated to $(X,d)$ and $(Y, \partial)$ are $*$-isomorphic. The rigidity problem for Roe algebras concerns whether $*$-isomorphisms of Roe algebras imply the existence of coarse equivalences between the underlying metric spaces. $\check{\text{S}}$pakula and Willett \cite{SWrigidity} demonstrated that this holds for all uniformly locally finite metric spaces with property~A. Subsequent developments were presented in \cite{BFVemb2019, BV_GelfandTypeDuality, BF_rigidity, BVFcoronas}, culminating in a definitive result in \cite{BFBFKVW_rigidity}. Specifically, the authors of \cite{BFBFKVW_rigidity} established that the uniform Roe algebras of uniformly locally finite spaces are Morita equivalent if and only if the underlying metric spaces are coarsely equivalent. Concurrently, the question of whether an isomorphism of uniform Roe algebras induces a bijective coarse equivalence was solved for property~A spaces in \cite{WW_CsA}, for nonamenable spaces in \cite{BFBFKVW_rigidity}, and for expander graphs in \cite{BBFVW_BijCoaExpanders}. However, the general case remains an open problem.
	
	A recent sequence of papers \cite{MVmodules, MVrigidity, MVpaper} by D. Martínez and F. Vigolo reformulated the theory of Roe algebras within the framework of general coarse spaces and coarse modules. In \cite{MVmodules}, the authors extended Roe's notion of a coarse module \cite{RoeCCaIToCRM} to encompass spaces not necessarily endowed with a topology. They reconstructed numerous results concerning the structure of Roe algebras in this new framework. Their second paper \cite{MVrigidity} focuses on the rigidity problem for Roe-like algebras. Specifically, Section 5 generalises the uniformisation theorem from \cite{BF_rigidity} to apply to general coarse modules over countably generated coarse spaces. They introduced approximating relations associated with a map between Roe-like algebras, a family of relations between coarse spaces dependent on specific parameters. They demonstrated that for any $*$-isomorphism (or Morita equivalence) between Roe-like algebras of coarse modules over countably generated coarse spaces, parameters can be chosen such that the approximating relations become coarse equivalences. The theory developed in these papers extends well beyond the context of Roe algebras, encompassing cases such as approximable and quasi-local algebras. 
	
	\subsection*{Main results}\
	
	The papers \cite{MVmodules, MVrigidity} provide a solution to the rigidity problem for Roe algebras of faithful\footnote{See Section 1 for preliminaries on the subject.} coarse modules over countably generated coarse spaces. However, the relationship between $*$-isomorphisms of Roe algebras of coarse modules and the induced coarse equivalences between the underlying coarse spaces remains not entirely transparent. In particular, we seek to understand how the induced coarse equivalence depends on the choice of faithful coarse modules. Furthermore, it is of significant interest to establish a functorial connection between the category of coarse spaces and a suitable category arising from operator algebraic techniques, which encapsulates the rigidity result from \cite{MVrigidity}.
	
	This paper examines the compatibility of coarse equivalences induced by $*$-isomorphisms of several faithful coarse modules over a countably generated locally finite coarse space. Particularlly, let $\mathcal{X}$ and $\mathcal{Y}$ be two locally finite coarse spaces, $\{C_X^i\}_{i \in I}$ a set of coarse $\mathcal{X}$-modules, $\{C_Y^i\}_{i \in I}$ a set of coarse $\mathcal{Y}$-modules, and $\phi_i$ a $*$-isomorphism between the algebras of approximable operators associated with $C^i_X$ and $C^i_Y$ for every $i \in I$. We aim to determine the conditions under which the induced coarse equivalences $f_i \colon \mathcal{X} \to \mathcal{Y}$ coincide. 
	
	To address this question, we assemble the coarse modules over a locally finite coarse space $\mathcal{X}$ into a $C^*$-category $\mathcal{A}(\mathcal{X})$, analogous to the one studied in \cite{BunkeEngel_HomThwBornCoarseSp} in the context of the coarse Baum--Connes conjecture. Subsequently, we investigate full and faithful $*$-functors between these categories. The following theorem demonstrates that full and faithful $*$-functors induce coarse embeddings\footnote{See Definition \ref{Def coarse maps}} between the underlying spaces.
	
	\begin{ltheorem}[Theorem \ref{*-functor => coarse emb} and Corollary \ref{ff *-funct => coarse emb (2)}] \label{theorem A}
		Let $\mathcal{X}$ and $\mathcal{Y}$ be locally finite countably generated coarse spaces with at most countably many connected components and $F \colon \mathcal{A}(\mathcal{X}) \to \mathcal{A}(\mathcal{Y})$ be a full and faithful $*$-functor. Then $F$ induces a coarse embedding $f^F \colon \mathcal{X} \to \mathcal{Y}$
	\end{ltheorem}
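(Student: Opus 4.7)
The plan is to reduce the statement to the rigidity theorem of Martínez--Vigolo from \cite{MVrigidity} by evaluating $F$ on a single well-chosen faithful object of $\mathcal{A}(\mathcal{X})$ and extracting the coarse subspace of $\mathcal{Y}$ on which the resulting image module is faithful.

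First I would fix a faithful coarse $\mathcal{X}$-module $C_X \in \mathcal{A}(\mathcal{X})$; such a module exists because $\mathcal{X}$ is locally finite, countably generated, and has at most countably many connected components, so the standard separable $\ell^2$-type Hilbert coarse module is available. Setting $D_Y := F(C_X) \in \mathcal{A}(\mathcal{Y})$, the assumption that $F$ is a full and faithful $*$-functor forces its restriction to endomorphism algebras to be a $*$-isomorphism
$$
F_{C_X,C_X} \colon \operatorname{End}_{\mathcal{A}(\mathcal{X})}(C_X) \xrightarrow{\ \cong\ } \operatorname{End}_{\mathcal{A}(\mathcal{Y})}(D_Y),
$$
i.e.\ a $*$-isomorphism between the approximable algebra of the faithful $\mathcal{X}$-module $C_X$ and the approximable algebra of the (a priori not necessarily faithful) $\mathcal{Y}$-module $D_Y$.

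The next step is to cope with the fact that $D_Y$ need not be faithful on all of $\mathcal{Y}$. I would isolate its \emph{coarse support}: a coarse subspace $\mathcal{Y}' \subseteq \mathcal{Y}$ on which $D_Y$ does act faithfully and whose associated approximable algebra coincides with $\operatorname{End}_{\mathcal{A}(\mathcal{Y})}(D_Y)$. Since $\mathcal{Y}$ is locally finite, countably generated, and has at most countably many connected components, these properties pass to the coarse subspace $\mathcal{Y}'$. The isomorphism $F_{C_X,C_X}$ is then exactly the kind of datum handled by the uniformisation/rigidity theorem of \cite[\S 5]{MVrigidity}: an isomorphism between approximable algebras of faithful coarse modules over locally finite countably generated coarse spaces. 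That theorem yields a coarse equivalence $f \colon \mathcal{X} \to \mathcal{Y}'$, and post-composing with the inclusion $\mathcal{Y}' \hookrightarrow \mathcal{Y}$ supplies the sought coarse embedding $f^F \colon \mathcal{X} \to \mathcal{Y}$.

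The step I expect to be most delicate is the construction of $\mathcal{Y}'$: one has to show that the image module $D_Y$, a priori only a coarse $\mathcal{Y}$-module, descends to a genuine faithful coarse module over a well-defined coarse subspace with the same approximable algebra, and that the inherited coarse structure on $\mathcal{Y}'$ retains the hypotheses required by MV rigidity. A secondary issue is showing that the coarse class of $f^F$ does not depend on the chosen $C_X$; this should follow by naturality, since for a second faithful module $C_X'$ any adjointable intertwiner $C_X \to C_X'$ in $\mathcal{A}(\mathcal{X})$ is transported by $F$ to an intertwiner $D_Y \to F(C_X')$ that witnesses the closeness of the two induced coarse embeddings.
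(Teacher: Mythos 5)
Your main construction follows the paper's route exactly: the ``coarse support'' $\mathcal{Y}'$ you propose is precisely the faithfulness domain $\operatorname{dom}_1(F(C_X))$ of Definition \ref{definition:: domains}, and the reduction you describe -- observe that $\operatorname{End}_{\mathcal{A}(\mathcal{Y})}(F(C_X))$ coincides with the approximable algebra of $F(C_X)$ viewed as a faithful module over $\operatorname{dom}_1(F(C_X))$, then invoke the Martínez--Vigolo rigidity theorem -- is exactly Corollary \ref{corollary: Nonfaithful rigidity}, which feeds into Theorem \ref{*-functor => coarse emb}. The existence of a faithful separable module under the stated hypotheses is Lemma \ref{cardinality of discrete partition for countably generated LFCM spaces}, as you anticipate. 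So for the existence of \emph{a} coarse embedding your outline is sound and matches the paper.

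The genuine gap is in your ``secondary issue'': independence of the choice of $C_X$, which is part of what the theorem asserts (the embedding is unique up to closeness and is the same for every faithful module). Your claim that ``any adjointable intertwiner $C_X \to C_X'$ is transported by $F$ to an intertwiner that witnesses the closeness of the two induced coarse embeddings'' does not work as stated: a morphism in $\mathcal{A}(\mathcal{X})$ between two faithful modules can be zero, or more generally can have support that is far from coarsely dense, in which case it witnesses nothing about the approximate relations of $U(C_X)$ versus $U(C_X')$. What is needed is a \emph{coarsely full} controlled-propagation intertwiner, and there is no reason one should exist directly between $C_X$ and $C_X'$. The paper circumvents this by passing to the direct sum $C \oplus D$ (which is again faithful), noting that $U(C)\oplus U(D)$ implements the isomorphism on $C\oplus D$ and that the inequality $\|\mathbbm{1}_B^{F(C)\oplus F(D)}(U(C)\oplus U(D))\mathbbm{1}_A^{C\oplus D}\| \geq \|\mathbbm{1}_B^{F(C)}U(C)\mathbbm{1}_A^{C}\|$ nests the approximate relations of $U(C)$ and $U(D)$ inside those of $U(C)\oplus U(D)$; Lemma \ref{lemma: included relations => asymptotic} then upgrades these inclusions of controlled relations with coarsely dense domains to asymptotic equivalences. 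You would also need the small preliminary step (Lemma \ref{lemma: properties of approximate relations} together with Remark \ref{remark:: folded approx relations when folded param}) showing the approximate relation is independent of the implementing unitary and of the parameters $\delta, F, E$, which your proposal does not mention.
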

	
	We then examine the structure of full and faithful $*$-functors between the associated categories and obtain a complete characterisation of these functors. Specifically, we introduce a congruence relation $\cong_u$ on the category $\mathcal{A}(\mathcal{X})$ and demonstrate that any full and faithful $*$-functor is equivalent to one induced by a coarse embedding when viewed as a functor between the quotient categories.
	
	\begin{ltheorem}(Theorem \ref{theorem: Structure_of_faf_star_functors}) \label{theorem B}
		Let $\mathcal{X}$ and $\mathcal{Y}$ be locally finite countably generated coarse spaces and $F \colon \mathcal{A}(\mathcal{X}) \to \mathcal{A}(\mathcal{Y})$ be a full and faithful $*$-functor. Then, it is naturally isomorphic to a $*$-functor induced by a coarse equivalence $f \colon \mathcal{X} \to \mathcal{Y}$ as functors between the quotient categories.
	\end{ltheorem}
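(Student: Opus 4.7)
The plan is to bootstrap from Theorem~\ref{theorem A} together with the congruence relation $\cong_u$. First apply Theorem~\ref{theorem A} to obtain a coarse embedding $f := f^F \colon \mathcal{X} \to \mathcal{Y}$; then promote $f$ to a coarse equivalence using fullness of $F$; and finally verify that $F$ is naturally isomorphic, in the quotient category, to the induced $*$-functor $F_f$.

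For the coarse-equivalence step, I would argue by contradiction: were $f(\mathcal{X})$ not coarsely dense in $\mathcal{Y}$, one could construct a coarse $\mathcal{Y}$-module $D$ supported far from $f(\mathcal{X})$ whose non-zero self-adjoint projections could not arise, modulo $\cong_u$, as images under $F$ of any morphism in $\mathcal{A}(\mathcal{X})$, contradicting fullness of $F$. Once $f$ is known to be a coarse equivalence, the pushforward construction from \cite{MVmodules} yields an induced $*$-functor $F_f \colon \mathcal{A}(\mathcal{X}) \to \mathcal{A}(\mathcal{Y})$.

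For the natural isomorphism, for each coarse $\mathcal{X}$-module $C$ I would construct a comparison morphism $\eta_C \colon F(C) \to F_f(C)$ using the approximating relations attached to $F$ in \cite{MVrigidity}. Both $F(C)$ and $F_f(C)$ are coarse $\mathcal{Y}$-modules whose geometric content lives, up to the controlled slack recorded by these relations, over $f(\mathcal{X}) \subseteq \mathcal{Y}$; since $\cong_u$ is engineered to absorb exactly this slack, the class $[\eta_C]$ is invertible in $\mathcal{A}(\mathcal{Y})/{\cong_u}$. Naturality in the quotient then follows from a diagram chase showing that for any approximable operator $T \colon C \to C'$ in $\mathcal{A}(\mathcal{X})$, the two compositions $\eta_{C'} \circ F(T)$ and $F_f(T) \circ \eta_C$ agree modulo $\cong_u$, with faithfulness of $F$ ensuring the final uniqueness.

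The main obstacle will be the first step: promoting the coarse embedding of Theorem~\ref{theorem A} to a genuine coarse equivalence using only fullness. This requires translating essential surjectivity of $F$ at the level of module categories into coarse density of $f^F(\mathcal{X})$ at the level of underlying points, a delicate process because the natural ``witness'' coarse module $D$ described above lies in the image of $F$ only up to the approximating parameters absorbed by $\cong_u$, so one must carefully track the interplay between these parameters and the geometric support of $D$. A secondary non-trivial point is verifying naturality of $\eta$ with respect to arbitrary morphisms in $\mathcal{A}(\mathcal{X})$, rather than merely endomorphisms of a fixed module; this is where the full strength of the approximating-relations framework of \cite{MVrigidity} becomes indispensable.
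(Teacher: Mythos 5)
Your overall architecture (extract a coarse map from Theorem~\ref{theorem A}, then compare $F$ with the induced pushforward functor via comparison morphisms $\eta_C$) matches the paper's, but two of your steps contain genuine gaps. The first is the attempt to upgrade $f^F$ to a coarse equivalence using only fullness. Fullness of $F$ asserts surjectivity of $F$ on the hom\nobreakdash-sets $\operatorname{Hom}(F(C), F(C'))$ for objects $C, C'$ of $\mathcal{A}(\mathcal{X})$; it says nothing about morphisms into or out of a $\mathcal{Y}$-module $D$ that is not isomorphic to an object in the image of $F$. Your witness module $D$ supported far from $f^F(\mathcal{X})$ therefore produces no contradiction. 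Indeed, if $\iota \colon \mathcal{X} \to \mathcal{Y}$ is a coarse embedding whose image is not coarsely dense, the pushforward $\iota_*$ is a full and faithful $*$-functor whose induced map is only a coarse embedding; the paper's Theorem~\ref{theorem: Structure_of_faf_star_functors} accordingly claims only that $F$ is naturally isomorphic modulo central unitaries to $f^F_*$ with $f^F$ a coarse \emph{embedding}, and a coarse equivalence is obtained only under the stronger hypothesis that $F$ is essentially surjective (Theorem~\ref{*-functor => coarse emb}).

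The second gap is in your treatment of $\eta_C$, which misreads what the congruence $\cong_u$ does. By definition $\cong_u$ identifies two operators only when they differ by pre- and post-composition with \emph{central} unitaries, i.e.\ SOT-sums of unimodular scalars over the coarsely connected components; it does not ``absorb controlled slack''. In particular, for $[\eta_C]$ to be a morphism of $\mathcal{A}(\mathcal{Y})/\cong_u$ at all, the operator $\eta_C = U(C)V(C)^*$ (with $U(C)$ and $V(C)$ spatially implementing $F$ and $f^F_*$ at $C$) must itself be approximable, and establishing this is the entire technical content of the paper's proof: one shows the approximate relations of $\eta_C$ are entourages, invokes the quantitative approximation result \cite[Proposition 9.11]{MVrigidity} --- which requires \emph{ampleness} of $C$ --- to approximate $U(C)$ by operators supported on controlled relations and conclude controlled propagation via Lemma~\ref{lemma: included relations => asymptotic}, and finally reduces an arbitrary module to an ample one through an approximable isometry $C \to \tilde{C}$. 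Your proposal contains no substitute for this argument, so the natural transformation you describe is not yet well defined. (Naturality modulo central unitaries itself is then comparatively easy, coming from the structure result $F(t) \cong_u U(D)\, t\, U(C)^*$ of Theorem~\ref{theorem: structure of fully faithful $*$-functors}, rather than from faithfulness as you suggest.)
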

	
	We conclude the article with several corollaries and open questions. In particular, we provide a solution to our initial problem.
	
	\begin{lcorollary}[Corollary \ref{corollary: initial goal}] \label{corollary A}
		Suppose given two locally finite countably generated coarse spaces $\mathcal{X}$ and $\mathcal{Y}$, a family of faithful coarse $\mathcal{X}$-modules $\{C_i^X\}_{i \in I}$, a family of faithful coarse $\mathcal{Y}$-modules $\{C_i^Y\}_{i \in I}$, and, for every $i \in I$, a $*$-isomorphism $\phi_i$ between the associated algebras of approximable operators of $C_i^X$ and $C_i^Y$. The following are equivalent:
		\begin{enumerate}
			\item The induced coarse equivalences $f_i \colon \mathcal{X} \to \mathcal{Y}$ coincide up to closeness;
			\item There exists a full and faithful $*$-functor $F \colon \mathcal{A}(\mathcal{X}) \to \mathcal{A}(\mathcal{Y})$ such that the isomorphisms $F \colon \operatorname{End}_{\mathcal{A}(\mathcal{X})}(C^X_i) \to \operatorname{End}_{\mathcal{A}(\mathcal{Y})}(C^Y_i)$ coincide with $\phi_i$.
		\end{enumerate}
	\end{lcorollary}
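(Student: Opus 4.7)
The plan is to treat the two implications separately, leaning on Theorems~\ref{theorem A} and~\ref{theorem B} together with the base rigidity theorem of \cite{MVrigidity} attached to a single $*$-isomorphism of approximable algebras. Throughout, I will write $F_f$ for a $*$-functor induced by a coarse equivalence $f$, as produced in the proof of Theorem~\ref{theorem B}, and use that, by construction, $F_f|_{\operatorname{End}(C)}$ is a $*$-isomorphism inducing the coarse equivalence $f$ in the sense of \cite{MVrigidity} for any faithful coarse $\mathcal{X}$-module $C$.

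For the direction $(2) \Rightarrow (1)$, I would start from the full and faithful $*$-functor $F$ of (2). Applying Theorem~\ref{theorem B} yields a coarse equivalence $f \colon \mathcal{X} \to \mathcal{Y}$ and a natural isomorphism $F \cong F_f$ in the quotient categories. Specialising this natural isomorphism at each object $C^X_i$ and then invoking the base rigidity theorem of \cite{MVrigidity} for the single $*$-isomorphism $\phi_i = F|_{\operatorname{End}(C^X_i)}$, one concludes that the coarse equivalence $f_i$ attached to $\phi_i$ is close to $f$ for every $i \in I$. Consequently, all $f_i$ are pairwise close.

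For the converse $(1) \Rightarrow (2)$, the plan is to fix any representative $f \colon \mathcal{X} \to \mathcal{Y}$ close to every $f_i$ (for example $f := f_{i_0}$) and then, using the constructive content behind Theorem~\ref{theorem B}, to assemble a full and faithful $*$-functor $F_0 := F_f$. In general $F_0(C^X_i) \neq C^Y_i$ and $F_0|_{\operatorname{End}(C^X_i)} \neq \phi_i$, so the goal becomes to modify $F_0$ on the countable family $\{C^X_i\}_{i \in I}$ without destroying functoriality, so that $F(C^X_i) = C^Y_i$ and $F|_{\operatorname{End}(C^X_i)} = \phi_i$ on the nose.

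The main obstacle is to perform these modifications coherently. The key input is that both $\phi_i$ and $F_0|_{\operatorname{End}(C^X_i)}$ induce coarse equivalences close to $f$; by the inner-conjugation part of the rigidity statement in \cite{MVrigidity}, they differ by conjugation by a partial isometry which upgrades to an isomorphism $u_i \colon F_0(C^X_i) \to C^Y_i$ in $\mathcal{A}(\mathcal{Y})$ intertwining $\phi_i$ and $F_0|_{\operatorname{End}(C^X_i)}$. I would assemble the $u_i$ into a natural isomorphism between $F_0$ and a modified $*$-functor $F$; since the objects $C^X_i$ are pairwise distinct and we impose no change elsewhere, the coherence conditions of a natural isomorphism are automatically satisfied. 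Finally, $F$ is full and faithful because $F_0$ is and natural isomorphism preserves these properties, and $F|_{\operatorname{End}(C^X_i)} = \phi_i$ by construction.
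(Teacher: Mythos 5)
Your proposal is correct and follows essentially the same route as the paper: the forward implication is an immediate application of the earlier rigidity results, and the converse twists the functor $f_*$ on the distinguished objects by the correction unitaries $u_i = U_i V(C_i^X)^*$, which is exactly the paper's construction. The one point worth making fully explicit is that the approximability of these $u_i$ is not a direct citation of \cite{MVrigidity} but rests on the argument of Theorem~\ref{theorem: Structure_of_faf_star_functors} (passing to an ample module in order to apply \cite{MVrigidity}, Proposition~9.11), and this is precisely where the hypothesis that every $f_i$ is close to $f$ enters.
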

	
	\begin{lcorollary}[Corollary \ref{corollary: cat equiv}] \label{corollary B}
		The functor $\mathcal{A}$ from the category of locally finite coarse spaces and equivalence classes of coarse embeddings modulo closeness to the category $\mathcal{C} / \cong_u$ of approximable $C^*$-categories and classes of natural modulo $\cong_u$ isomorphisms of full and faithful $*$-functors is full and faithful.
	\end{lcorollary}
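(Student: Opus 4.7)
The proof is essentially an unpacking of Theorems~\ref{theorem A} and~\ref{theorem B}, repackaged as the statement that the functor $\mathcal{A}$ is full and faithful. I would separate the argument into a fullness claim and a faithfulness claim, each relying on one of the two theorems, and then verify that the construction of the induced coarse embedding from Theorem~\ref{theorem A} is compatible with the congruence $\cong_u$.

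For fullness, I would start with an arbitrary full and faithful $*$-functor $F\colon\mathcal{A}(\mathcal{X})\to\mathcal{A}(\mathcal{Y})$. Theorem~\ref{theorem B} provides a coarse map $f\colon\mathcal{X}\to\mathcal{Y}$ together with a natural $\cong_u$-isomorphism between $F$ and the functor $\mathcal{A}(f)$ induced by $f$. Combining with Theorem~\ref{theorem A}, $f$ may be taken to be a coarse embedding (namely the coarse embedding $f^{F}$ produced from $F$). Hence the class of $F$ in the hom-set of $\mathcal{C}/\cong_u$ lies in the image of $\mathcal{A}$, proving fullness.

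For faithfulness, I would consider two coarse embeddings $f,g\colon\mathcal{X}\to\mathcal{Y}$ such that the induced $*$-functors $\mathcal{A}(f)$ and $\mathcal{A}(g)$ are naturally $\cong_u$-isomorphic, and aim to show $f\sim g$. My plan is to pass each of them through the construction of Theorem~\ref{theorem A}, which assigns to every full and faithful $*$-functor $G\colon\mathcal{A}(\mathcal{X})\to\mathcal{A}(\mathcal{Y})$ a coarse embedding $f^G$. The first step is to verify that applying this construction to $\mathcal{A}(f)$ recovers $f$ up to closeness, i.e.\ $f^{\mathcal{A}(f)}\sim f$; this is a direct unwinding of how $f^G$ is defined on approximable operators implementing the standard coarse module on $\mathcal{X}$. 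The second step is to check that the assignment $G\mapsto f^{G}$ descends to the quotient $\mathcal{C}/\cong_u$: if $G_1,G_2$ are naturally $\cong_u$-isomorphic, then the approximating relations built from $G_1$ and $G_2$ (in the sense of Mart\'{\i}nez--Vigolo) differ by uniformly bounded perturbations, so the resulting coarse embeddings are close. Combining these two steps yields $f\sim f^{\mathcal{A}(f)}\sim f^{\mathcal{A}(g)}\sim g$, giving faithfulness.

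The main obstacle is the second step of the faithfulness argument: establishing that a natural $\cong_u$-isomorphism between two full and faithful $*$-functors forces their induced coarse embeddings to coincide up to closeness. This amounts to tracking how unitary-like perturbations (encoded by the congruence $\cong_u$) interact with the choice of supports and approximating relations used in the proof of Theorem~\ref{theorem A}. The anticipated strategy is to fix a common ambient coarse $\mathcal{Y}$-module that receives both sides, use the naturality squares to replace each $G_i$-image of a rank-one operator by a $\cong_u$-equivalent one, and then invoke the local finiteness of $\mathcal{Y}$ to promote pointwise closeness to uniform closeness. Once this compatibility is in place, Corollary~\ref{corollary A} can be read off as the specialisation of faithfulness to a fixed family of coarse modules, but for Corollary~\ref{corollary B} itself the argument terminates at the conjunction of the two steps above with Theorem~\ref{theorem B}.
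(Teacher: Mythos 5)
Your fullness argument coincides with the paper's: Theorem~\ref{theorem: Structure_of_faf_star_functors} already says every full and faithful $*$-functor is naturally $\cong_u$-isomorphic to one induced by a coarse embedding, so fullness is immediate. The genuine gap is in the second step of your faithfulness argument, which you rightly flag as the main obstacle but for which your anticipated strategy would not succeed. The components $\eta_C \colon f_*(C) \to g_*(C)$ of a natural $\cong_u$-isomorphism are a priori only \emph{approximable} unitaries, and approximability alone gives no entourage bound on $\operatorname{Supp}(\eta_C)$; neither ``local finiteness of $\mathcal{Y}$ promoting pointwise to uniform closeness'' nor naturality squares against rank-one operators produce such a bound --- the squares only tell you that $\operatorname{Ad}_{\eta_C}$ carries $\operatorname{End}_{\mathcal{A}(\mathcal{Y})}(f_*(C))$ onto $\operatorname{End}_{\mathcal{A}(\mathcal{Y})}(g_*(C))$. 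The missing ingredient is the rigidity machinery itself: because $\operatorname{Ad}_{\eta_C}$ is such a $*$-isomorphism, Theorem~\ref{theorem: Iso => unitary is coarse-like} (a Baire-category argument requiring countable generation) makes $\eta_C$ coarse-like, and Proposition~\ref{proposition: approximable, coarse-like unitary => controlled} then upgrades the approximable unitary $\eta_C$ to one of controlled propagation. Without invoking these, your step~2 is unsupported.

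Once controlled propagation of $\eta_C$ is in hand, the paper's route is shorter than yours: since $f_*$ and $g_*$ are implemented at $C$ by the identity of $H_C$, the comparison unitary $\nu_C = V(C)U(C)^*$ has support exactly $(f\times g)(E^X_{\text{disc}})$ and the same corner norms as $\eta_C$ (they differ by central unitaries), so $\operatorname{Supp}(\eta_C) = (f\times g)(E^X_{\text{disc}})$ is an entourage and $f \sim g$ follows at once. Your decomposition $f \sim f^{\mathcal{A}(f)} \sim f^{\mathcal{A}(g)} \sim g$ is workable in principle, but both of its steps reduce to the same support computation, while additionally requiring you to verify $f^{\mathcal{A}(f)} \sim f$ separately; it therefore adds work without avoiding the key analytic input.
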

	
	The authors of \cite{MVrigidity, MVmodules, MVpaper} investigated the relationships between $*$-isomorphisms of Roe-like algebras and the geometry of the underlying coarse spaces. We explore the relationships between collections of $*$-isomorphisms of Roe-like algebras and the geometry of their associated coarse spaces. The core elements of the proofs of Theorems \ref{theorem A} and \ref{theorem B} include assembling coarse modules into additive $C^*$-categories (see Section 1.3) and introducing a family of invariants of coarse modules, specifically the domains of coarse modules (see Definition \ref{definition:: domains}). The invariance of the domains of coarse modules under invertible approximable operators (see Theorem \ref{theorem: domain invariance}) enables the application of more category-theoretic methods (), as demonstrated in Sections 3 and 4.
	
	The manuscript is structured as follows. In Section 1, we recall the fundamental notions of coarse spaces, including the relational perspective on coarse maps (see Section 1.1). Additionally, we introduce the concept of a \emph{locally finite coarse measurable (LFCM) space} (see Definition \ref{definition: discrete space}), defined as a coarse space equipped with a suitable measurable structure. We demonstrate that the category of LFCM spaces, with equivalence classes of coarse measurable maps, is equivalent to the category of locally finite coarse spaces. The notion of LFCM spaces is motivated by the need for techniques enabling work directly with a coarse space without requiring its discretisation. For instance, every proper metric space equipped with a Boolean algebra of Borel sets constitutes a LFCM space. We adapt the theory of coarse modules from \cite{MVmodules} to the framework of LFCM spaces (see Section 1.2). The principal advantage of this approach is that all coarse modules over a LFCM space are discrete in the sense of \cite[Definition 4.18]{MVmodules}. In Section 1.3, we introduce the approximable category $\mathcal{A}(\mathcal{X})$ for a LFCM space $\mathcal{X}$ and several variations of it. We establish that $\mathcal{A}(\mathcal{X})$ is an additive $C^*$-category and demonstrate that $\mathcal{A}(\mathcal{X})$ depends functorially on $\mathcal{X}$. In Section 1.4, we briefly review the strategy behind the proof of the rigidity theorem in \cite{MVrigidity} for algebras of approximable operators. In Section 2, we investigate various invariants of coarse modules, particularly their domains, which are specific parts of the underlying LFCM space whose geometry is reflected in the module. We prove that isomorphic modules have asymptotic domains (see Theorem \ref{theorem: domain invariance}). We also derive a nonfaithful analogue of \cite[Corollary 6.20]{MVrigidity} (see Corollary \ref{corollary: Nonfaithful rigidity}). Section 3 is dedicated to the proof of Theorem \ref{theorem A} for different instances of the approximable category, while Section 4 focuses on the proof of Theorem \ref{theorem B}. Both theorems rely on the additivity of approximable categories and the invariance of domains under isomorphisms of coarse modules as their central arguments. Finally, in Section 7, we proved Corollaries \ref{corollary A} and \ref{corollary B} and posed several open questions concerning other variants of categories of coarse modules.
	
	\begin{center}
		\textbf{Acknowledgements}
	\end{center}
	
	The author sincerely thanks his advisors, Alessandro Vignati and Romain Tessera, for their insightful discussions on the topic and their valuable suggestions for improving the presentation. We are also deeply grateful to Diego Martínez and Federico Vigolo for identifying a mistake in Theorem 2.8 in the earlier version of the manuscript and for their valuable comments and remarks.
	
	\section{Preliminaries}
	
	In this section, we present the preliminary results on coarse geometry, coarse geometric modules and the strategy of the proof of the main result of \cite{MVrigidity}. The theory of coarse modules and their categories have undergone minor changes to facilitate subsequent results. 
	 
	\subsection{Coarse geometry}
	
	We start by recalling the definition of a coarse space. We refer the reader to the second chapter of \cite{RoeBook} for an in-depth discussion of the subject.  Broadly speaking, coarse spaces serve as abstractions of metric spaces, allowing the study of large-scale geometric properties.
	
	\begin{definition}
    	A \emph{coarse space} is defined as a pair $(X, \mathcal{E}^X)$, where $X$ is a set, and $\mathcal{E}^X \subset \mathcal{P}(X \times X)$ satisfies the following conditions:
    	\begin{enumerate}
        	\item $\Delta_X = \{(x,x) \mid x \in X\} \in \mathcal{E}^X$, referred to as the diagonal;
        	\item If $E \in \mathcal{E}^X$ and $F \subseteq E$, then $F \in \mathcal{E}^X$;
        	\item If $E, F \in \mathcal{E}^X$, then $E \cup F \in \mathcal{E}^X$;
        	\item If $E \in \mathcal{E}^X$, then $E^T = \{(y,x) \mid (x,y) \in E\} \in \mathcal{E}^X$;
        	\item If $E, F \in \mathcal{E}^X$, then $F \circ E = \{(z,x) \mid \exists y \in X \colon (z,y) \in F, (y,x) \in E\} \in \mathcal{E}^X$.
    	\end{enumerate}
    	A coarse space $(X, \mathcal{E})$ is said to be \emph{countably generated} if there exists a countable subset $\{E_k\}_{k \in \mathbb{N}} \subset \mathcal{E}$ such that $\mathcal{E}$ is the smallest coarse structure on $X$ containing $\{E_k\}_{k \in \mathbb{N}}$.
	\end{definition}

	The elements of $\mathcal{E}^X$ are typically referred to as \emph{controlled sets} or \emph{entourages}. An entourage $E$ is \emph{symmetric} if $E^T = E$. A symmetric entourage that contains the diagonal $\Delta_X$ is said to be a \emph{gauge}. For $A \subset X$ and an entourage $E \in \mathcal{E}^X$, the \emph{$E$-neighbourhood} of $A$ is the set
	$$
		E[A] = \{x \in X \mid \exists a \in A \colon (x, a) \in E\}.
	$$ 
	A subset $A$ of $X$ is said to be \emph{$E$-bounded} if contained within the $E$-neighbourhood of any of its points; equivalently, if $A \times A \subset E$. A subset is \emph{bounded} if it is \emph{$E$-bounded} for some $E \in \mathcal{E}^X$.
	
	Recall from \cite[Chapter 2]{RoeBook} that an extended metric space $(X,d)$ can be regarded as a coarse space by equipping $X$ with the coarse structure generated by the family $\{E_n\}_{n \in \mathbb{N}}$, where  
	$$  
		E_n = \{(x,y) \mid d(x,y) \leq n\}.  
	$$  
 	By \cite[Theorem 2.55]{RoeBook}, a coarse structure on a set $X$ is countably generated if and only if it is induced by an extended metric on $X$. For an example of a coarse structure that is not countably generated, see \cite[Section 2.3]{BFV_GURAR}.
	
	\begin{definition}\label{Def coarse maps}
    Let $(X, \mathcal{E}^X)$ and $(Y, \mathcal{E}^Y)$ be coarse spaces.
    	\begin{enumerate}
        	\item A map $f \colon X \to Y$ is \emph{coarse} if, for every entourage $E \in \mathcal{E}^X$, the set $(f \times f)(E)$ is an entourage in $Y$.
        	\item Two coarse maps $f,g \colon X \to Y$ are \emph{close} (denoted $f \sim g$) if, for every entourage $E \in \mathcal{E}^X$, the set $(f \times g)(E)$ is an entourage in $Y$.
        	\item A coarse map $f \colon X \to Y$ is a \emph{coarse equivalence} if there exists a map $g \colon Y \to X$ such that $f \circ g \sim \operatorname{id}_Y$ and $g \circ f \sim \operatorname{id}_X$.
        	\item A coarse map $f \colon X \to Y$ is a \emph{coarse embedding} if it is a coarse equivalence as a map from $X$ to $\operatorname{im}(f)$, where $\operatorname{im}(f)$ is equipped with the coarse structure 
        	$$
            	\mathcal{E}^{\operatorname{im}(f)} = \{E \cap (\operatorname{im}(f) \times \operatorname{im}(f)) \mid E \in \mathcal{E}^Y\}.
        	$$
    	\end{enumerate}
	\end{definition}
	
	In the metric setting, the definitions of coarse maps and the closeness relation coincide with those given in the introduction. It can be verified that closeness is an equivalence relation on the set of coarse maps. Morphisms of coarse spaces should be regarded as closeness classes of coarse maps, in which case, coarse equivalences correspond precisely to isomorphisms. Note that different authors may adopt distinct and possibly inequivalent definitions of coarse maps.
	
	\begin{definition}[\cite{MVmodules}, Definition 3.11]
		Let $(X, \mathcal{E})$ be a coarse space and $A, B \subset X$. We say that $A$ is \emph{subordinate} to $B$ ($A \prec B$) if there exists an entourage $E \in \mathcal{E}^X$ such that $A \subset E[B]$. The set $A$ is \emph{asymptotically equivalent} to $B$ ($A \asymp B$) if both $A \prec B$ and $B \prec A$ hold.
	\end{definition}
	
	For two coarse spaces $(X, \mathcal{E}^X)$ and $(Y, \mathcal{E}^Y)$, we endow their product $Y \times X$ with the minimal coarse structure generated by $\mathcal{E}^Y \times \mathcal{E}^X$. In this work, a relation $R$ from a set $X$ to a set $Y$ is simply a subset of $Y \times X$. Let $\pi_X$ and $\pi_Y$ denote the coordinate projections from $Y \times X$ onto $X$ and $Y$, respectively. Relations are more convenient than maps; however, both will be utilised throughout this work.
	
	\begin{definition}[\cite{MVmodules}, section 3.2]
		Let $(X, \mathcal{E}^X)$ and $(Y, \mathcal{E}^Y)$ be coarse spaces. A relation $R \subset Y \times X$ is said to be controlled if
		$$
			R \circ \mathcal{E}^X \circ R^T \subset \mathcal{E}^Y,
		$$
		that is, for any $E \in \mathcal{E}^X$, the set $R \circ E \circ R^T$ is an entourage. A controlled relation $R$ is called
		\begin{enumerate}
    		\item \emph{densely defined} if $\pi_X(R) \asymp X$;
    		\item \emph{coarsely surjective} if $\pi_Y(R) \asymp Y$;
    		\item a \emph{partial coarse embedding} if $R^T$ is a controlled relation.
		\end{enumerate}
	\end{definition}
	
	For coarse spaces $(X, \mathcal{E}^X)$ and $(Y, \mathcal{E}^Y)$, observe that a partial coarse embedding $R \subset Y \times X$ is necessarily proper, meaning that the preimage of any bounded set in $Y$ is bounded in $X$. Indeed, for a bounded set $B \subset Y$, we have  
	$$  
		R^T[B] \times R^T[B] = R^T \circ (B \times B) \circ R \in \mathcal{E}^Y,  
	$$  
	since $R^T$ is a controlled relation.

	\begin{remark}
		As demonstrated in \cite[Lemma 3.31]{MVmodules}, any densely defined coarse relation defines a coarse map. Conversely, the graph of a coarse map defines a densely defined controlled relation. Thus, we have a correspondence between the language of coarse maps and controlled relations.
		\begin{center}
			\renewcommand{\arraystretch}{1.5} % Adjust this value for desired row height
			\begin{tabular}{| c | c |} 
    			\hline
    			\textbf{Partial maps} & \textbf{Relations} \\ [2ex]
    			\hline
    			Partial coarse map & Controlled relation \\ [1ex]
    			Coarse map & Densely defined controlled relation \\ [1ex]
    			Cobounded partial coarse map & Coarsely surjective controlled relation \\ [1ex]
    			Expansive coarse map & Densely defined partial coarse embedding \\ [1ex]
    			Coarse equivalence & Coarsely surjective, densely defined partial coarse embedding \\ [1ex]
    			Close coarse maps & Asymptotic controlled relations \\ [1ex]
    			\hline
			\end{tabular}
		\end{center}
		For further details, we refer the reader to \cite{MVmodules, MVrigidity}. A result that will play a significant role in the subsequent developments is the following.
	\end{remark}
	
	\begin{lemma}[\cite{MVmodules}, Corollary 3.42] \label{lemma: included relations => asymptotic}
    	Let $(X, \mathcal{E}^X)$ and $(Y, \mathcal{E}^Y)$ be coarse spaces, and let $R_1$ and $R_2$ be controlled relations from $X$ to $Y$. If $R_1 \prec R_2$, and $\pi_X(R_2) \prec \pi_X(R_1)$, then $R_1 \asymp R_2$.
	\end{lemma}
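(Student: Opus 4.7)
The plan is to establish the missing direction $R_2 \prec R_1$, since combined with the hypothesis $R_1 \prec R_2$ this immediately gives $R_1 \asymp R_2$ by symmetry. The overall strategy is: starting from an arbitrary point $(y_2,x_2) \in R_2$, use $\pi_X(R_2) \prec \pi_X(R_1)$ to approximate $x_2$ by some $x_1 \in \pi_X(R_1)$, then use $R_1 \prec R_2$ applied to a witness $(y_1,x_1) \in R_1$ to produce an auxiliary companion $(y_2',x_2') \in R_2$ that is close to $(y_1,x_1)$, and finally exploit the fact that $R_2$ itself is a controlled relation to transfer closeness in the $X$-coordinate to closeness in the $Y$-coordinate.

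Concretely, I would unpack the hypotheses: the inclusion $R_1 \prec R_2$ in the product coarse structure yields entourages $F \in \mathcal{E}^Y$ and $G \in \mathcal{E}^X$ such that every $(y_1,x_1) \in R_1$ admits some $(y_2',x_2') \in R_2$ with $(y_1,y_2') \in F$ and $(x_1,x_2') \in G$, while $\pi_X(R_2) \prec \pi_X(R_1)$ yields $H \in \mathcal{E}^X$ with $\pi_X(R_2) \subset H[\pi_X(R_1)]$. Given $(y_2,x_2) \in R_2$, choose $x_1 \in \pi_X(R_1)$ with $(x_2,x_1) \in H$, pick any $y_1$ with $(y_1,x_1) \in R_1$, and apply the first hypothesis to get $(y_2',x_2') \in R_2$ as above; then $(x_2,x_2') \in H \circ G$.

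The key step (and the only genuinely geometric one) is the following: since $(y_2,x_2), (y_2',x_2') \in R_2$ have $X$-coordinates related by $H \circ G$, and $R_2$ is controlled, the set
$$
F' := R_2 \circ (H \circ G) \circ R_2^T \in \mathcal{E}^Y
$$
is an entourage of $Y$, and one checks directly that $(y_2,y_2') \in F'$. Combining with $(y_1,y_2') \in F$ gives $(y_2,y_1) \in F' \circ F^T \in \mathcal{E}^Y$. Thus $(y_2,x_2)$ lies in the $\bigl((F' \circ F^T) \times H\bigr)$-neighbourhood of the point $(y_1,x_1) \in R_1$, so $R_2 \subset \bigl((F' \circ F^T) \times H\bigr)[R_1]$ and therefore $R_2 \prec R_1$.

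The main obstacle I anticipate is purely notational: keeping track of which composition lives where and orienting the relations correctly, particularly in verifying that $(y_2,y_2')\in R_2\circ(H\circ G)\circ R_2^T$ really follows from the definition of composition. Once that bookkeeping is set up cleanly, the rest is routine manipulation of entourages, and the whole argument fits into the ``triangle inequality'' paradigm, with the control of $R_2$ playing the role of the hidden glue between the $X$-side hypothesis and the $Y$-side conclusion.
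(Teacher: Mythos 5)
Your argument is correct. Note that the paper itself gives no proof of this lemma --- it is quoted verbatim from \cite{MVmodules} (Corollary 3.42) --- so there is nothing to compare against; your self-contained derivation is a valid direct proof. The one direction you need, $R_2 \prec R_1$, is obtained exactly as it should be: the choice of $x_1$, $y_1$, and the auxiliary point $(y_2',x_2')$ is sound, the verification that $(y_2,y_2') \in R_2 \circ (H\circ G)\circ R_2^T$ matches the paper's composition convention, and all the entourages $F$, $G$, $H$, $F'$ are fixed in advance of the point $(y_2,x_2)$, so the resulting neighbourhood $\bigl((F'\circ F^T)\times H\bigr)[R_1] \supset R_2$ is uniform. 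The only implicit step worth flagging is your use of the fact that every entourage of the product coarse structure on $Y\times X$ is contained in a rectangle $F\times G$ with $F\in\mathcal{E}^Y$, $G\in\mathcal{E}^X$; this is true (rectangles are closed under the generating operations up to enlargement) but deserves a sentence in a written-up version.
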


	The coarse category $\textbf{Coarse}$ is a category whose objects are coarse spaces and for two coarse spaces $(X, \mathcal{E}^X)$ and $(Y, \mathcal{E}^Y)$, the morphisms from $(X, \mathcal{E}^X)$ to $(Y, \mathcal{E}^Y)$ are coarse maps modulo closeness. The morphisms between coarse spaces can be treated as densely defined controlled relations modulo asymptotic equivalence. \newline

	We are now ready to introduce a measurable-like structure on coarse spaces that will substitute the algebra of Borel subsets in Roe's definition of coarse modules \cite{RoeCCaIToCRM}[Definition 4.2]. A partition $\{A_i\}_{i \in I}$ of a coarse space $(X, \mathcal{E}^X)$ is said to be \emph{locally finite} if every bounded subset of $X$ intersects at most finitely many elements of the partition. It is said to be \emph{controlled} if there exists a gauge $E \in \mathcal{E}^X$ such that $A_i$ is $E$-bounded for all $i \in I$.
	
	\begin{definition} \label{definition: discrete space}
		A \emph{locally finite coarse measurable (LFCM) space} is a triple $(X, \mathcal{E}^X, \mathfrak{A}^X)$, where $(X, \mathcal{E}^X)$ is a coarse space, and $\mathfrak{A}^X$ is a Boolean algebra of subsets of $X$ for which there exists a measurable, locally finite, controlled partition $\{A_i\}_{i \in I}$ of $X$ such that $B \in \mathfrak{A}^X$ if and only if $B \cap A_i \in \mathfrak{A}^X$ for all $i \in I$.
	\end{definition}
	
	Throughout the paper, LFCM spaces will be denoted by letters in a special calligraphic font ($\mathcal{X}, \mathcal{Y}, \mathcal{Z}, \ldots$) to simplify notation. The partition of a LFCM space used in the definition will be referred to as a discrete partition, and the gauge that controls its elements will be referred to as a discreteness gauge (denoted by $E^X_{\text{disc}}$). Note that for a fixed discrete partition $\{A_i\}_{i \in I}$ the discreteness gauge $E^X_{\text{disc}}$ may be chosen to be a disjoint union of blocks of the form $A_i \times A_i$, i.e.
	$$
		E^X_{\text{disc}} = \bigsqcup_{i \in I} A_i \times A_i.
	$$
	
	\begin{definition}
		Let $\mathcal{X}$ and $\mathcal{Y}$ be LFCM spaces. A map $f \colon X \to Y$ is said to be a coarse measurable map if $f \colon (X, \mathcal{E}^X) \to (Y, \mathcal{E}^Y)$ is coarse, and $f^{-1}(\mathfrak{A}^Y) \subset \mathfrak{A}^X$.
	\end{definition}
	
	A coarse space is said to be \textit{locally finite} if every bounded subset of it is finite. In the case of a locally finite metric space $(X, d)$, the algebra of Borel subsets is simply the power set $\mathcal{P}(X)$. We equip all locally finite coarse spaces with the power set Boolean algebra (which implies that the discrete partition is a partition of $X$ into singletons, and $E^X_{\text{disc}} = \Delta_X$). This leads to a functor from the full subcategory of \textbf{Coarse} whose objects are locally finite coarse spaces to the category $\textbf{LFCM}$ of LFCM spaces, where the morphisms are $\sim$-equivalence classes of coarse maps.
	
	\begin{proposition} \label{proposition: discritization of LFCM spaces}
		Let $\mathcal{X}$, $\mathcal{Y}$ be LFCM spaces, and let $f \colon (X, \mathcal{E}^X) \to (Y, \mathcal{E}^Y)$ be a coarse map. Then, there exists a coarse measurable map $\tilde{f} \colon \mathcal{X} \to \mathcal{Y}$ that is close to $f$.
	\end{proposition}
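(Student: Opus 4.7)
The plan is to replace $f$ by a map that is constant on each block of the discrete partition of $\mathcal{X}$ and takes values in a fixed set of representatives of the blocks of the discrete partition of $\mathcal{Y}$. The coarseness of $f$ together with the fact that both partitions are controlled ensures that this replacement only perturbs $f$ by entourages, while the block-wise constancy forces measurability.

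Concretely, fix discrete partitions $\{A_i\}_{i \in I}$ and $\{B_j\}_{j \in J}$ of $\mathcal{X}$ and $\mathcal{Y}$ together with discreteness gauges $E^X_{\mathrm{disc}}$ and $E^Y_{\mathrm{disc}}$. Using the axiom of choice, for each $i \in I$ pick $x_i \in A_i$ and let $j(i) \in J$ be the unique index with $f(x_i) \in B_{j(i)}$; for each $j \in J$ pick $y_j \in B_j$. Define $\tilde{f} \colon X \to Y$ by $\tilde{f}(x) = y_{j(i)}$ whenever $x \in A_i$.

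For measurability, given $B \in \mathfrak{A}^Y$, the intersection $\tilde{f}^{-1}(B) \cap A_i$ equals $A_i$ when $y_{j(i)} \in B$ and is empty otherwise; since each $A_i$ lies in $\mathfrak{A}^X$ (the discrete partition is measurable by definition), the defining property of $\mathfrak{A}^X$ gives $\tilde{f}^{-1}(B) \in \mathfrak{A}^X$. For closeness, given $E \in \mathcal{E}^X$ and $(x, x') \in E$ with $x' \in A_i$, the pair $(x_i, x')$ lies in $A_i \times A_i \subset E^X_{\mathrm{disc}}$, so $(f(x_i), f(x'))$ lies in the entourage $(f \times f)(E^X_{\mathrm{disc}} \circ E)$, and $(f(x_i), y_{j(i)}) \in E^Y_{\mathrm{disc}}$; composing in $Y$ shows $(f(x), \tilde{f}(x'))$ is in an entourage of $\mathcal{Y}$. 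Coarseness of $\tilde{f}$ follows by the same kind of chain applied twice, one on each side, again using only $(f \times f)(E^X_{\mathrm{disc}} \circ E \circ E^X_{\mathrm{disc}})$ and $E^Y_{\mathrm{disc}}$.

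There is no substantial obstacle here; the argument is essentially a bookkeeping exercise relying on the two features packaged into the notion of an LFCM space, namely that the discrete partition is controlled in $\mathcal{X}$ (so collapsing blocks to points is a closeness-preserving move) and that its elements are measurable (so the resulting block-constant map automatically has measurable preimages). The only care needed is to apply the coarseness hypothesis on $f$ through the composite entourage $E^X_{\mathrm{disc}} \circ E \circ E^X_{\mathrm{disc}}$ rather than $E$ itself, so that representatives $x_i$ may be substituted freely.
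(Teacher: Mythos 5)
Your proof is correct and is essentially the paper's argument unfolded: the paper factors $\tilde{f}$ as $i_Y \circ \pi_Y \circ f \circ i_X \circ \pi_X$ through the discretised index spaces $\mathcal{I} = (I, \mathcal{E}^I, \mathcal{P}(I))$ and $\mathcal{J}$, and your choice of representatives $x_i$, $y_j$ is exactly a choice of the coarse inverses $i_X$, $i_Y$, yielding the same block-constant map $x \mapsto y_{j(i)}$. Your direct verifications of measurability and closeness via $E^X_{\mathrm{disc}}$ and $E^Y_{\mathrm{disc}}$ are sound.
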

	\begin{proof}
		Fix a discrete partition $\{A_i\}_{i \in I}$ of $\mathcal{X}$ and consider a triple $\mathcal{I}_X = (I, \mathcal{E}^I, \mathcal{P}(I))$, where $E \subset I \times I$ belongs to $\mathcal{E}^I$ if and only if
		$$
			\bigsqcup_{(i,j) \in E} A_i \times A_j \in \mathcal{E}^X.
		$$
		It is straightforward to verify that $\mathcal{I}_X$ defines a LFCM space, with its discrete partition given by the partition of $I$ into singletons. There is an evident projection $\pi_X \colon X \to I$ that collapses each $A_i$ to a point $i \in I$. Note that $\pi_X$ is measurable, coarse, expansive, and surjective, thus constituting a coarse equivalence. Let $i_X \colon I \to X$ be a coarse inverse of $\pi_X$. Since the measurable structure on $I$ is provided by the power set $\mathcal{P}(I)$, it follows that $i_X$ is measurable, and consequently, $\pi_X$ is a bimeasurable coarse equivalence. Similarly, select a discrete partition $\{B_j\}_{j \in J}$ to obtain a LFCM space $(J, \mathcal{E}^J, \mathcal{P}(J))$, which is coarsely equivalent to $\mathcal{Y}$ via a bimeasurable map $\pi_Y$. Consider the following diagram:
		$$
			\begin{tikzcd}
    			\mathcal{X} \arrow{d}{\pi_X} \arrow{r}{f} & \mathcal{Y} \\
    			\mathcal{I} \arrow{r}{\tilde{f}_0} & \mathcal{J} \arrow{u}{i_Y},
			\end{tikzcd}
		$$
		where $\tilde{f}_0 = \pi_Y \circ f \circ i_X$. The map $\tilde{f}_0$ is coarse as it is a composition of coarse maps, and it is measurable since the measurable structures on $\mathcal{I}_X$ and $\mathcal{I}_Y$ are simply the power sets. Define $\tilde{f} = i_Y \circ \tilde{f}_0 \circ \pi_X$. It follows that $\tilde{f}$ is a coarse measurable map, and since $i_Y \circ \pi_Y \sim \operatorname{id}_Y$, and $i_X \circ \pi_X \sim \operatorname{id}_X$, one concludes that $f \sim \tilde{f}$.
	\end{proof}
	
	As one may observe from the proof, the spaces under consideration are precisely the locally finite coarse spaces modulo large bounded parts. Despite the seemingly restrictive conditions of Definition \ref{definition: discrete space}, LFCM spaces encompass all proper metric spaces equipped with the Borel algebra of measurable subsets.
	
	\begin{corollary}
		Let $\textbf{LC}$ denote the full subcategory of $\textbf{Coarse}$ whose objects are locally finite coarse spaces. The functor $\mathcal{P} \colon \textbf{LC} \to \textbf{LFCM}$ that sends a locally finite coarse space $(I, \mathcal{E}^I)$ to a LFCM space $(I, \mathcal{E}^I, \mathcal{P}(I))$ is an equivalence of categories.
	\end{corollary}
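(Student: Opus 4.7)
The plan is to verify the three standard conditions for an equivalence of categories: essential surjectivity, fullness, and faithfulness. Both fullness and faithfulness should fall out almost immediately from the observation that, when the Boolean algebra on both the source and target is the full power set, the measurability condition on morphisms is vacuous; the content is in essential surjectivity.

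For fullness and faithfulness, fix locally finite coarse spaces $(I, \mathcal{E}^I)$ and $(J, \mathcal{E}^J)$. A morphism in $\textbf{LC}$ from $(I, \mathcal{E}^I)$ to $(J, \mathcal{E}^J)$ is a closeness class of coarse maps $I \to J$. A morphism in $\textbf{LFCM}$ from $\mathcal{P}(I, \mathcal{E}^I)$ to $\mathcal{P}(J, \mathcal{E}^J)$ is a closeness class of coarse measurable maps. Since $f^{-1}(\mathcal{P}(J)) \subset \mathcal{P}(I)$ holds automatically, every coarse map is coarse measurable, so the underlying sets of morphisms coincide. The closeness relation in each category is defined in terms of the coarse structures only, so the equivalence classes agree as well. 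Thus $\mathcal{P}$ induces a bijection on Hom-sets, giving both fullness and faithfulness.

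For essential surjectivity, let $\mathcal{X} = (X, \mathcal{E}^X, \mathfrak{A}^X)$ be a LFCM space, and reuse the construction appearing in the proof of Proposition \ref{proposition: discritization of LFCM spaces}. Fix a discrete partition $\{A_i\}_{i \in I}$ of $\mathcal{X}$ and form the LFCM space $\mathcal{I}_X = (I, \mathcal{E}^I, \mathcal{P}(I))$ with coarse structure induced by the partition. By construction $(I, \mathcal{E}^I)$ is a locally finite coarse space, so $\mathcal{I}_X = \mathcal{P}(I, \mathcal{E}^I)$. The collapsing projection $\pi_X \colon \mathcal{X} \to \mathcal{I}_X$ was shown in that proof to be a bimeasurable coarse equivalence, and hence an isomorphism in $\textbf{LFCM}$. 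This exhibits $\mathcal{X}$ as isomorphic to the image of $(I, \mathcal{E}^I)$ under $\mathcal{P}$.

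The only step requiring any care is essential surjectivity, but it is precisely the content of Proposition \ref{proposition: discritization of LFCM spaces} and the remarks preceding it, so no new work is needed; the main obstacle — namely, discretising a general LFCM space — has already been cleared in the previous proposition, and the corollary is essentially a packaging of that result into categorical language.
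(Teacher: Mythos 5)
Your proof is correct and matches the intended argument: the paper leaves this corollary without an explicit proof precisely because essential surjectivity is the content of Proposition \ref{proposition: discritization of LFCM spaces} (via the discretisation $\mathcal{I}_X$ and the bimeasurable coarse equivalence $\pi_X$), while fullness and faithfulness are immediate since measurability is vacuous for power-set Boolean algebras and closeness depends only on the coarse structures. No issues.
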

	
	\subsection{Coarse geometric modules}
	
	Coarse modules were first introduced by Roe \cite{RoeCCaIToCRM} for proper metric spaces as non-degenerate representations of the algebra of compactly supported continuous complex-valued functions. The authors of \cite{MVmodules} presented a topologically independent definition of coarse modules for a coarse space $ (X, \mathcal{E}) $ as a triple $ (\mathfrak{A}, H, \mathbbm{1}_{\bullet}) $, where $ \mathfrak{A} $ is a Boolean algebra of subsets of $ X $, $ H $ is a Hilbert space, and $ \mathbbm{1}_{\bullet} $ denotes a representation of $ \mathfrak{A} $ on $ H $. In light of our decision to fix the Boolean algebra, we will adapt the aforementioned definition accordingly.
	
	\begin{definition}
		Let $ \mathcal{X} $ be a LFCM space. A coarse $ \mathcal{X} $-module $ C $ is a pair $ (H_C, \mathbbm{1}_{\bullet}^C) $, where $ H_C $ is a Hilbert space and $ \mathbbm{1}_{\bullet}^C $ is a representation of $ \mathfrak{A}^X $ on $ H_C $, such that for some gauge $ E_{\text{ndeg}}^X $, the set
		$$
    		H_C^0 = \operatorname{span} \{ \mathbbm{1}_A^C H_C \mid A \in \mathfrak{A}^X \text{ is } E^X_{\text{ndeg}} \text{-bounded} \}
		$$
		is dense in $ H_C $. The gauge $E^X_{\text{ndeg}}$ will be referred to as a non-degeneracy gauge.
	\end{definition}
	
	As noted in \cite{MVmodules}, for a proper metric space $ (X, d) $, the object defined above coincides with a non-degenerate representation of $ C_0(X) $ when $ \mathfrak{A}^X$ is taken to be the algebra of Borel measurable sets.
	
	\begin{remark}
		According to the conditions imposed on the algebra of measurable sets of a LFCM space $ \mathcal{X} $, it can be verified that any coarse geometric module as defined here is a discrete coarse geometric module in the sense of \cite{MVmodules}. The discreteness gauge may be taken as the non-degeneracy gauge. It is important to observe that the local finiteness of the discrete partition implies that
		$$
			H_C^0 = \operatorname{span} \{\mathbbm{1}_A^C H \mid A \in \mathfrak{A}^X \text{ is bounded}\}.
		$$
	\end{remark}
	
	\begin{definition}[\cite{MVmodules}, Definition 4.10]
		Let $ \mathcal{X} $ be a LFCM space. A coarse $\mathcal{X}$-module $C$ is \emph{faithful} if the set
		$$
			\bigcup \{ A \in \mathfrak{A}^X \mid A \text{ is } E^X_{\text{disc}} \text{-bounded and } \mathbbm{1}_A^C \neq 0 \}
		$$
		is coarsely dense in $X$.
	\end{definition}
	
	Our primary focus is on bounded operators acting between the underlying Hilbert spaces of coarse modules, which, with a slight abuse of notation, will be denoted by $B(C_X, C_Y)$. Throughout this paper, coarse modules will be denoted by plain capital letters, such as $C, D, \ldots$. If it is necessary to emphasise that $C$ is an $\mathcal{X}$-module, we shall include the respective subscript $C_X$.
	
	\begin{definition}[\cite{MVmodules}, Definition 5.6]
		Let $\mathcal{X}$, $\mathcal{Y}$ be LFCM spaces, $C_X$, $C_Y$ be coarse modules, and $t \colon H_{C_X} \to H_{C_Y}$ be a bounded operator. The support of $t$ is the following subset of $Y \times X$:
		$$
			\operatorname{Supp}(t) = \bigcup \{B \times A \mid A \in \mathfrak{A}^X \text{ is } E_{\text{ndeg}}^X \text{-bounded, } B \in \mathfrak{A}^Y \text{ is } E_{\text{ndeg}}^Y \text{-bounded}, \mathbbm{1}_B t \mathbbm{1}_A \neq 0\}.
		$$
	\end{definition}
	
	In light of \cite[Proposition 5.7]{MVmodules}, the asymptotic equivalence class of the support defined above coincides with the concept of support as introduced in \cite{MVmodules}. In fact, up to asymptotic equivalence, the defined object is independent of the choice of the non-degeneracy gauge, thereby permitting the selection of the discreteness gauge. To simplify the notation, from this point onwards, the non-degeneracy gauge of a coarse module will always be taken to be the discreteness gauge of the LFCM space.
	
	\begin{lemma}[\cite{MVmodules}, Lemma 5.3] \label{lemma: properties of support}
		Let $\mathcal{X}$, $\mathcal{Y}$, and $\mathcal{Z}$ be LFCM spaces, with $C_X$, $C_Y$, and $C_Z$ denoting coarse modules. Consider bounded operators $t_1, t_2 \colon H_{C_X} \to H_{C_Y}$ and $s \colon H_{C_Y} \to H_{C_Z}$. Then the following hold:
		\begin{enumerate}
			\item (Support and adjoints) $\operatorname{Supp}(s^*) = \operatorname{Supp}(s)^T$;
		
			\item (Support and sums) $\operatorname{Supp}(t_1 + t_2) \subset \operatorname{Supp}(t_1) \cup \operatorname{Supp}(t_2)$;
		
			\item (Support and compositions) $\operatorname{Supp}(s t_1) \subset \operatorname{Supp}(s) \circ E^Y_{\text{disc}} \circ \operatorname{Supp}(t_1)$.
		\end{enumerate}
	\end{lemma}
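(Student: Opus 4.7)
The plan is to prove all three items directly from the definition of $\operatorname{Supp}$, using only elementary properties of the Boolean-algebra representation $\mathbbm{1}_{\bullet}$. Items (1) and (2) are essentially formal. For (1), observe that $\mathbbm{1}_A s^* \mathbbm{1}_B = (\mathbbm{1}_B s \mathbbm{1}_A)^*$, since projections of the form $\mathbbm{1}_{\bullet}$ are self-adjoint; hence the pair of bounded measurable sets $(A, B)$ witnesses $(x, y) \in \operatorname{Supp}(s^*)$ if and only if $(B, A)$ witnesses $(y, x) \in \operatorname{Supp}(s)$, giving $\operatorname{Supp}(s^*) = \operatorname{Supp}(s)^T$. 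For (2), the identity $\mathbbm{1}_B (t_1 + t_2)\mathbbm{1}_A = \mathbbm{1}_B t_1 \mathbbm{1}_A + \mathbbm{1}_B t_2 \mathbbm{1}_A$ shows that if $(B, A)$ witnesses some point of $\operatorname{Supp}(t_1 + t_2)$, then it witnesses that point for at least one of the $t_i$, yielding the inclusion.

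For item (3), I would fix a discrete partition $\{B_j\}_{j \in J}$ of $\mathcal{Y}$ so that $E^Y_{\text{disc}} = \bigsqcup_j B_j \times B_j$. Since $\mathbbm{1}_{\bullet}$ is a representation of a Boolean algebra, $\mathbbm{1}_{B_j}\mathbbm{1}_{B_k} = \mathbbm{1}_{B_j \cap B_k} = 0$ for $j \neq k$, so $\{\mathbbm{1}_{B_j}\}_j$ is a family of pairwise orthogonal projections. The key auxiliary claim is that $\sum_{j \in J} \mathbbm{1}_{B_j} = \operatorname{id}_{H_{C_Y}}$ in the strong operator topology: the SOT-limit $P$ of the increasing net of finite partial sums is a projection, and for any $\xi = \mathbbm{1}_B \zeta$ with $B \in \mathfrak{A}^Y$ bounded, local finiteness of the discrete partition implies $B$ meets only finitely many $B_j$, whence $P\xi = \sum_{j : B_j \cap B \neq \emptyset} \mathbbm{1}_{B_j \cap B} \zeta = \mathbbm{1}_B \zeta = \xi$; density of $H_{C_Y}^0$ then forces $P = \operatorname{id}$.

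Granted this, let $(z, x) \in \operatorname{Supp}(s t_1)$ be witnessed by an $E^X_{\text{disc}}$-bounded $A \in \mathfrak{A}^X$ with $x \in A$ and an $E^Z_{\text{disc}}$-bounded $C \in \mathfrak{A}^Z$ with $z \in C$. Expanding via the SOT-decomposition, $\mathbbm{1}_C s t_1 \mathbbm{1}_A = \sum_j \mathbbm{1}_C s \mathbbm{1}_{B_j} t_1 \mathbbm{1}_A$, so nonvanishing of the left-hand side forces some index $j_0$ with $\mathbbm{1}_C s \mathbbm{1}_{B_{j_0}} t_1 \mathbbm{1}_A \neq 0$, and hence both $\mathbbm{1}_C s \mathbbm{1}_{B_{j_0}} \neq 0$ and $\mathbbm{1}_{B_{j_0}} t_1 \mathbbm{1}_A \neq 0$. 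Since $B_{j_0}$ is $E^Y_{\text{disc}}$-bounded and necessarily nonempty, choosing any $y \in B_{j_0}$ gives $(z, y) \in \operatorname{Supp}(s)$, $(y, y) \in B_{j_0} \times B_{j_0} \subset E^Y_{\text{disc}}$, and $(y, x) \in \operatorname{Supp}(t_1)$, so $(z, x) \in \operatorname{Supp}(s) \circ E^Y_{\text{disc}} \circ \operatorname{Supp}(t_1)$. The only delicate point is the SOT expansion, which is a standard approximate-unit argument leaning on the local finiteness of the discrete partition together with the density of $H_{C_Y}^0$.
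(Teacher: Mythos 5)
Your proposal is correct and follows essentially the same route as the paper: items (1) and (2) are treated as formal consequences of the definition, and item (3) is obtained by inserting the SOT-decomposition of the identity along a discrete partition of $\mathcal{Y}$ to locate an index $j_0$ with $\mathbbm{1}_C s \mathbbm{1}_{B_{j_0}} \neq 0$ and $\mathbbm{1}_{B_{j_0}} t_1 \mathbbm{1}_A \neq 0$. The only difference is that you spell out the approximate-unit argument for $\sum_j \mathbbm{1}_{B_j} = \operatorname{id}$ via local finiteness and density of $H^0_{C_Y}$, which the paper simply asserts.
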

	\begin{proof}
		The proof of the first two statements is identical to that in \cite{MVmodules}. For the third assertion, let $\{B_i\}_{i \in I}$ be a discrete partition of $Y$. Since the sum in the Strong Operator Topology (SOT) of $\mathbbm{1}_{B_i}$ over $I$ converges to the identity, it follows that for any product of measurable sets $C \times A$ from the union in the definition of the support of $s t_1$, there exists $i \in I$ such that
		$$
			\mathbbm{1}_C s \mathbbm{1}_{B_i} \neq 0, \quad \text{and} \quad \mathbbm{1}_{B_i} t_1 \mathbbm{1}_A \neq 0.
		$$
		Therefore, it follows that $C \times A \subset \operatorname{Supp}(s) \circ E^Y_{\text{disc}} \circ \operatorname{Supp}(t_1)$.
	\end{proof}
	
	Note that the third statement of the preceding lemma is stronger than the corresponding one in \cite{MVmodules}. This enhancement arises from the fact that the modules considered in our work are discrete, as defined in \cite[Definition 4.18]{MVmodules}. The classes of operators of particular interest in this paper are summarised in the following definition.
	
	\begin{definition}[\cite{MVrigidity}]
		Let $\mathcal{X}$, $\mathcal{Y}$ be LFCM spaces, and $C_X^1$, $C_X^2$, $C_Y$ be coarse modules. Let $t \colon H_{C_X^1} \to H_{C_X^2}$ and $g \colon H_{C_X^1} \to H_{C_Y}$ be bounded operators.
		\begin{enumerate}
    		\item $g$ is \emph{controlled} if the support of $g$ is a controlled relation;
    		\item \begin{enumerate}
        		\item $t$ is said to have \emph{controlled propagation} if its support is an entourage. For an entourage $E$ in $\mathcal{X}$, the operator $t$ is said to have \emph{$E$-controlled propagation} if its support is contained within $E$;
        		\item For an entourage $E$ in $\mathcal{X}$ and $\varepsilon > 0$, the operator $t$ is said to be \emph{$(\varepsilon, E)$-approximable} if there exists a bounded operator $s$ of $E$-controlled propagation such that $\|t - s\| \le \varepsilon$;
        		\item $t$ is said to be \emph{approximable} if for every $\varepsilon > 0$, there exists an entourage $E \in \mathcal{E}_X$ such that $t$ is $(\varepsilon, E)$-approximable.
    		\end{enumerate}
		\end{enumerate}
	\end{definition}

	For a detailed exposition of the properties of the aforementioned classes of operators, we refer to \cite{MVmodules, MVrigidity}. When we wish to emphasise that $t \colon H_{C_X^1} \to H_{C_X^2}$ is approximable, we will write $t \colon C_X^1 \to C_X^2$. The motivation for this notation will become apparent shortly. We conclude this section with several propositions that are required for the subsequent results.

	\begin{proposition} \label{proposition: approximable operators form a Banach space}
		Let $\mathcal{X}$, $\mathcal{Y}$ be LFCM spaces, and $C_X^1$, $C_X^2$, $C_Y$ be coarse modules. The set of approximable operators from $C_X^1$ to $C_X^2$ forms a Banach space. The set of approximable operators from $C_X^1$ to itself forms a $C^*$-algebra.
	\end{proposition}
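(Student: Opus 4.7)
The proof splits into two blocks: the Banach-space statement and the $C^*$-algebra statement. For the Banach-space part, I would verify that the approximable operators form a norm-closed linear subspace of $B(H_{C_X^1}, H_{C_X^2})$. Linearity is a direct consequence of Lemma \ref{lemma: properties of support}: given $t_1, t_2$ that are respectively $(\varepsilon/2, E_1)$- and $(\varepsilon/2, E_2)$-approximable by $s_1, s_2$, the operator $s_1 + s_2$ has propagation contained in the entourage $E_1 \cup E_2$ by the support-of-sums property, and the triangle inequality gives $\|t_1 + t_2 - (s_1 + s_2)\| \le \varepsilon$. Scalar multiplication is immediate. For norm-closedness, given $t_n \to t$ with each $t_n$ approximable and $\varepsilon > 0$, I choose $n$ with $\|t - t_n\| \le \varepsilon/2$ and an $E$-controlled $s$ with $\|t_n - s\| \le \varepsilon/2$, yielding a $(\varepsilon, E)$-approximation of $t$.

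For the $C^*$-algebra statement, I need two additional closure properties: under composition and under taking adjoints. Closure under adjoints uses the support-of-adjoints identity from Lemma \ref{lemma: properties of support}: if $s$ is an $E$-controlled approximant of $t$ within $\varepsilon$, then $s^*$ is an $E^T$-controlled approximant of $t^*$ within $\varepsilon$, since $\|t^* - s^*\| = \|t - s\|$. The $C^*$-identity $\|t^* t\| = \|t\|^2$ is automatic as we are sitting inside the $C^*$-algebra $B(H_{C_X^1})$.

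For closure under composition, suppose $t_1$ is $(\varepsilon_1, E_1)$-approximable by $s_1$ and $t_2$ is $(\varepsilon_2, E_2)$-approximable by $s_2$. By the support-of-compositions bound, $\operatorname{Supp}(s_1 s_2) \subset E_1 \circ E^X_{\text{disc}} \circ E_2$, which is an entourage of $\mathcal{X}$. The estimate
\[
\|t_1 t_2 - s_1 s_2\| \le \|t_1 - s_1\|\,\|t_2\| + \|s_1\|\,\|t_2 - s_2\| \le \varepsilon_1 \|t_2\| + (\|t_1\| + \varepsilon_1)\varepsilon_2
\]
can be made arbitrarily small by first fixing $\varepsilon_1$ small relative to $\|t_2\|$ and then choosing $\varepsilon_2$ small relative to $\|t_1\| + \varepsilon_1$, producing an approximant of $t_1 t_2$ with controlled propagation and arbitrarily small error.

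I do not expect a genuine obstacle here: the entire argument is a routine packaging of Lemma \ref{lemma: properties of support}. The only point worth stating carefully is the intermediate $E^X_{\text{disc}}$ factor in the composition bound, which is harmless because composing any entourage with a gauge again yields an entourage of $\mathcal{X}$, so the approximant $s_1 s_2$ does have controlled propagation in the required sense.
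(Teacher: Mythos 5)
Your proof is correct. The paper does not actually supply an argument for this proposition (it is stated without proof, implicitly deferring to the cited work of Mart\'inez and Vigolo), and your verification is precisely the routine packaging of Lemma \ref{lemma: properties of support} --- closure under sums, limits, adjoints, and composition, with the $E^X_{\text{disc}}$ factor in the composition bound absorbed by the coarse-structure axioms --- that the author evidently has in mind.
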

	
	Controlled operators will serve as $*$-homomorphisms between $C^*$-algebras of approximable operators as suggested by the following proposition.
		
	\begin{proposition}[\cite{MVmodules}, Proposition 7.1] \label{proposition: properties of controlled operators}
		Let $\mathcal{X}$ and $\mathcal{Y}$ be LFCM spaces. Let $s_1, s_2 \colon H_{C_X} \to H_{C_Y}$ with asymptotically equivalent supports. The following are equivalent:
		\begin{enumerate}
    		\item The operators $s_1$ and $s_2$ are controlled;
    		\item For every $E \in \mathcal{E}^X$, there exists $F \in \mathcal{E}^Y$ such that for every $E$-controlled propagation operator $t \colon C_X \to C_X$, the operator $s_1 t s_2^*$ has $F$-controlled propagation.
		\end{enumerate}
	\end{proposition}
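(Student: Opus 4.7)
The plan is to establish the two implications separately: $(1) \Rightarrow (2)$ is a direct support-composition estimate, while $(2) \Rightarrow (1)$ requires probing $s_1$ and $s_2$ by rank-one operators.

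For $(1) \Rightarrow (2)$, I would iterate the support-of-composition inclusion from Lemma \ref{lemma: properties of support} to obtain
$$
\operatorname{Supp}(s_1 t s_2^*) \subset \operatorname{Supp}(s_1) \circ E^X_{\text{disc}} \circ \operatorname{Supp}(t) \circ E^X_{\text{disc}} \circ \operatorname{Supp}(s_2)^T.
$$
When $t$ has $E$-controlled propagation, the middle three factors lie in the fixed entourage $E' := E^X_{\text{disc}} \circ E \circ E^X_{\text{disc}} \in \mathcal{E}^X$. The controlled-relation axiom directly bounds only the symmetric expression $\operatorname{Supp}(s_1) \circ E' \circ \operatorname{Supp}(s_1)^T$, so I would use $\operatorname{Supp}(s_2) \asymp \operatorname{Supp}(s_1)$ in the product coarse structure on $Y \times X$ to write $\operatorname{Supp}(s_2)^T \subset E_0 \circ \operatorname{Supp}(s_1)^T \circ F_0^T$ for some $E_0 \in \mathcal{E}^X$ and $F_0 \in \mathcal{E}^Y$. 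Substituting and absorbing $E_0$ into $E'$, the controlledness of $s_1$ produces a single entourage $F \in \mathcal{E}^Y$ containing $\operatorname{Supp}(s_1 t s_2^*)$ for every $E$-controlled $t$, which is the conclusion of $(2)$.

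For $(2) \Rightarrow (1)$, I would fix $E \in \mathcal{E}^X$ and apply the hypothesis to the enlarged entourage $E'' := E^X_{\text{disc}} \circ E \circ E^X_{\text{disc}}$ to obtain a corresponding $F \in \mathcal{E}^Y$. Given any $(y, x) \in \operatorname{Supp}(s_1)$ and $(y', x') \in \operatorname{Supp}(s_2)$ with $(x, x') \in E$, the definition of support lets me pick $E^X_{\text{disc}}$-bounded blocks $A \ni x$, $A' \ni x'$, $E^Y_{\text{disc}}$-bounded blocks $B \ni y$, $B' \ni y'$, and unit vectors $\xi_1 \in \mathbbm{1}_A H_{C_X}$, $\xi_2 \in \mathbbm{1}_{A'} H_{C_X}$ satisfying $\mathbbm{1}_B s_1 \xi_1 \neq 0$ and $\mathbbm{1}_{B'} s_2 \xi_2 \neq 0$. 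A short direct computation using the block structure of $E^X_{\text{disc}}$ shows the rank-one operator $t := \xi_1 \xi_2^*$ satisfies $\operatorname{Supp}(t) \subset E''$, so $s_1 t s_2^* = (s_1 \xi_1)(s_2 \xi_2)^*$ has $F$-controlled propagation, and the non-vanishing conditions force $(y, y') \in \operatorname{Supp}(s_1 t s_2^*) \subset F$. Varying the quadruple yields $\operatorname{Supp}(s_1) \circ E \circ \operatorname{Supp}(s_2)^T \subset F \in \mathcal{E}^Y$, and a final use of $\operatorname{Supp}(s_1) \asymp \operatorname{Supp}(s_2)$ exactly as in the first implication upgrades this to the controlled-relation condition $\operatorname{Supp}(s_1) \circ E \circ \operatorname{Supp}(s_1)^T \in \mathcal{E}^Y$. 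The controlledness of $s_2$ then follows by the same asymptotic equivalence.

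The main obstacle will be the bookkeeping in $(2) \Rightarrow (1)$: passing between the block-level definition of support and set-theoretic entourages introduces enlargements by $E^X_{\text{disc}}$ and $E^Y_{\text{disc}}$ at each step, so one must apply the hypothesis to $E''$ rather than $E$, and track that the final entourage $F$ absorbs the discreteness gauge of $\mathcal{Y}$ when translating ``$\mathbbm{1}_B (s_1 t s_2^*) \mathbbm{1}_{B'} \neq 0$ for some $E^Y_{\text{disc}}$-bounded $B, B'$'' into ``$(y,y') \in F$''. Local finiteness of the discrete partition keeps all such enlargements inside the respective coarse structures, so the argument goes through, but verifying the correct flow of quantifiers (a single $F$ witnessing the conclusion for all qualifying quadruples) is the principal technical point.
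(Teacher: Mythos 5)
Your argument is correct. Note that the paper does not prove this proposition at all --- it is imported verbatim from \cite{MVmodules} (Proposition 7.1 there) --- so there is no in-paper proof to compare against; your reconstruction, with the iterated support-composition estimate for $(1)\Rightarrow(2)$ and the rank-one probing $t=\xi_1\xi_2^*$ for $(2)\Rightarrow(1)$, is essentially the standard argument used in that reference, and your handling of the asymmetry between $s_1$ and $s_2$ via the asymptotic equivalence of supports and of the quantifier order (one $F$ for all qualifying rank-one operators) is sound.
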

	
	\begin{proposition}[\cite{MVmodules}, Lemma 7.17] \label{proposition: approximable, coarse-like unitary => controlled}
		Let $\mathcal{X}$ be a LFCM space, and let $C_1$, $C_2$ be coarse $\mathcal{X}$-modules. Any controlled, approximable unitary $u \colon H_{C_1} \to H_{C_2}$ has controlled propagation.
	\end{proposition}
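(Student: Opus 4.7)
The plan is to show that for every $E^X_{\text{disc}}$-bounded subset $A$, the image set $V_A := \pi_Y(\operatorname{Supp}(u) \cap (Y \times A))$ lies within a uniform entourage-thickening of $A$. Writing $\operatorname{Supp}(u) = \bigcup_A V_A \times A$ then immediately places $\operatorname{Supp}(u)$ inside a single entourage of $\mathcal{X}$. I would combine approximability and unitarity to pin $V_A$ geometrically near $A$, and use controlledness to bound the internal ``diameter'' of $V_A$.

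First, I would apply approximability to fix $s \colon H_{C_1} \to H_{C_2}$ of $E_0$-controlled propagation with $\|u - s\| \le 1/2$. For a disc-bounded $A$ with $\mathbbm{1}_A \neq 0$ and a unit vector $\xi \in \mathbbm{1}_A H_{C_1}$, the support bound on $s$ forces $\mathbbm{1}_{A_j} s \mathbbm{1}_A = 0$ whenever the partition element $A_j$ is disjoint from $E_0[A]$; setting $\tilde{E} := \bigcup \{A_j : A_j \cap E_0[A] \neq \emptyset\} \subseteq (E^X_{\text{disc}} \circ E_0)[A]$, one gets $s\xi \in \mathbbm{1}_{\tilde{E}} H_{C_2}$. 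Unitarity gives $\|u\xi\| = 1$ while $\|\mathbbm{1}_{\tilde{E}^c} u\xi\| = \|\mathbbm{1}_{\tilde{E}^c}(u - s)\xi\| \le 1/2$, so the Pythagorean identity yields $\|\mathbbm{1}_{\tilde{E}} u\xi\| \ge \sqrt{3}/2$. Hence some partition element $A_{j_0} \subseteq \tilde{E}$ satisfies $\mathbbm{1}_{A_{j_0}} u \mathbbm{1}_A \neq 0$, placing $A_{j_0}$ inside $V_A \cap (E^X_{\text{disc}} \circ E_0)[A]$.

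Next, I would invoke controlledness of $u$ with $E = E^X_{\text{disc}}$: setting $R := \operatorname{Supp}(u)$ and $G := R \circ E^X_{\text{disc}} \circ R^T$, one has $G \in \mathcal{E}^X$. For any $y_1, y_2 \in V_A$ witnessed by $x_1, x_2 \in A$, the pair $(x_1, x_2) \in A \times A \subseteq E^X_{\text{disc}}$ gives $(y_1, y_2) \in G$, so $V_A \times V_A \subseteq G$. Picking any $z_0 \in A_{j_0}$ from the previous step, every $y \in V_A$ belongs to $G[z_0] \subseteq (G \circ E^X_{\text{disc}} \circ E_0)[A]$. Setting $E' := G \circ E^X_{\text{disc}} \circ E_0 \in \mathcal{E}^X$, one has $V_A \subseteq E'[A]$ uniformly in $A$; composing with $A \times A \subseteq E^X_{\text{disc}}$ gives $V_A \times A \subseteq E' \circ E^X_{\text{disc}}$, so $\operatorname{Supp}(u) \subseteq E' \circ E^X_{\text{disc}} \in \mathcal{E}^X$, as required.

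The main obstacle will be the passage in the first step from the norm inequality on $\mathbbm{1}_{\tilde{E}^c} u\xi$ to the non-vanishing of an actual ``matrix block'' $\mathbbm{1}_{A_{j_0}} u \mathbbm{1}_A$. This step depends crucially on the strict positive lower bound on $\|u\xi\|$ coming from unitarity: without it, one could only conclude that the bulk of the mass of $u\xi$ lives near $A$, which is not enough to say anything about the genuine, amplitude-insensitive support of $u$.
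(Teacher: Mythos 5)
The paper does not prove this proposition at all: it is imported verbatim as \cite[Lemma 7.17]{MVmodules}, so there is no in-text argument to compare against. Judged on its own, your proof is correct and self-contained, and it uses each hypothesis exactly where it is needed. Approximability plus the discrete partition localises $s\xi$ in $\mathbbm{1}_{\tilde{E}}H_{C_2}$ (note $\tilde E$ is measurable because it is a union of partition elements, and $\mathbbm{1}_{\tilde E}=\operatorname{SOT}\text{-}\sum_{A_j\subseteq\tilde E}\mathbbm{1}_{A_j}$, the same device the paper uses in the proof of Lemma \ref{lemma: properties of support}); unitarity turns the resulting mass estimate $\|\mathbbm{1}_{\tilde E}u\xi\|\geq\sqrt3/2$ into a genuinely nonzero block $\mathbbm{1}_{A_{j_0}}u\mathbbm{1}_A\neq 0$, which pins one point of the fibre $\operatorname{Supp}(u)[A]$ inside $(E^X_{\mathrm{disc}}\circ E_0)[A]$; and controlledness, applied with $E=E^X_{\mathrm{disc}}\supseteq A\times A$, gives $\operatorname{Supp}(u)[A]\times\operatorname{Supp}(u)[A]\subseteq G$ and hence drags the whole fibre into $E'[A]$ for the uniform entourage $E'=G\circ E^X_{\mathrm{disc}}\circ E_0$. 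Two cosmetic points: since both modules live over the same space, your $Y$ is just $X$; and the displayed identity $\operatorname{Supp}(u)=\bigcup_A V_A\times A$ should be the inclusion $\operatorname{Supp}(u)\subseteq\bigcup_A V_A\times A$ (the reverse containment need not hold, but only the inclusion is used). Your closing remark is also the right diagnosis of why unitarity (or at least bounded invertibility from below) cannot be dropped.
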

	
	\subsection{Approximable category}
	
	In this subsection, we introduce the primary objects of study, specifically the approximable category. Related constructs have been explored in \cite{BunkeEngel_HomThwBornCoarseSp} within the framework of addressing the Baum--Connes conjecture.
	
	\begin{definition}
		Let $\mathcal{X}$ be a LFCM space. For an infinite cardinal $\kappa$, define the category $\mathcal{A}_{\kappa}(\mathcal{X})$, whose objects are coarse $\mathcal{X}$-modules of rank at most $\kappa$. Morphisms between two coarse $\mathcal{X}$-modules $C_0$ and $C_1$ are approximable operators from $C_0$ to $C_1$. Additionally, define the category $\mathcal{A}_{\infty}(\mathcal{X})$ with objects consisting of coarse $\mathcal{X}$-modules of arbitrary rank and morphisms given by approximable operators. Hereafter, $\mathcal{A}_{\infty}(\mathcal{X})$ (respectively, $\mathcal{A}_{\kappa}(\mathcal{X})$) will be referred to as the \emph{approximable category} (respectively, the \emph{approximable category of rank $\kappa$}) of $\mathcal{X}$.
	\end{definition}
	
	Most of the theory developed in this work applies equally to $\mathcal{A}_{\infty}(\mathcal{X})$ and $\mathcal{A}_{\kappa}(\mathcal{X})$. Accordingly, we introduce the notation $\mathcal{A}(\mathcal{X})$ to denote either $\mathcal{A}_{\kappa}(\mathcal{X})$ or $\mathcal{A}_{\infty}(\mathcal{X})$. Furthermore, we adopt the convention that if an expression involves $\mathcal{A}(\mathcal{X})$ multiple times, then all instances refer either to $\mathcal{A}_{\kappa}(\mathcal{X})$ for a fixed $\kappa$, or to $\mathcal{A}_{\infty}(\mathcal{X})$ exclusively.
	
	Lemma \ref{lemma: properties of support} establishes that $\mathcal{A}(\mathcal{X})$ is a well-defined category. Moreover, a straightforward verification, combined with Proposition \ref{proposition: approximable operators form a Banach space}, demonstrates that $\mathcal{A}(\mathcal{X})$ forms a $C^*$-category. For a detailed treatment of $C^*$-categories, we refer the reader to \cite{BunkeEngel_AdditiveCstarCat, Mitchener_CstarCats}. The approximable category of rank $\kappa$ can be regarded as a small $C^*$-category.

	Suppose we are given a coarse measurable map $f \colon \mathcal{X} \to \mathcal{Y}$ between LFCM spaces. For an $\mathcal{X}$-module $C$, define a representation $\mathbbm{1}_{\bullet}^{f_*C}$ of $\mathfrak{A}^Y$ on $H_C$ by 
	$$
		\mathbbm{1}_{A}^{f_*C} = \mathbbm{1}_{f^{-1}(A)}^C.
	$$
	Since $f$ is measurable, this yields a well-defined representation. Analogous constructions have been studied in \cite[page 41, 4.9]{RoeCCaIToCRM} and \cite[Definition 8.9]{BunkeEngel_HomThwBornCoarseSp}.
	
	\begin{lemma} \label{lemma: Pushforward of a module}
    Let $\mathcal{X}$ and $\mathcal{Y}$ be LFCM spaces, $f \colon \mathcal{X} \to \mathcal{Y}$ be a coarse measurable map, and $C$ be an $\mathcal{X}$-module. The representation $\mathbbm{1}^{f_*(C)}_{\bullet}$ of $\mathfrak{A}^Y$ on $H_C$ is non-degenerate. Consequently, it defines a coarse $\mathcal{Y}$-module.
	\end{lemma}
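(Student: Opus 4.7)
The plan is to verify two things in turn: first, that $\mathbbm{1}_\bullet^{f_*(C)}$ is a bona fide representation of the Boolean algebra $\mathfrak{A}^Y$ on $H_C$, and second, that it is non-degenerate with respect to some explicit gauge in $\mathcal{Y}$. For the first point, measurability of $f$ guarantees $f^{-1}(\mathfrak{A}^Y) \subset \mathfrak{A}^X$, so each $\mathbbm{1}_A^{f_*(C)}$ is a well-defined projection; since preimage commutes with finite unions, intersections, and complements, the representation axioms transfer verbatim from $\mathbbm{1}_\bullet^C$.

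The substantive content lies in producing a non-degeneracy gauge. I would fix discrete partitions $\{A_i\}_{i \in I}$ of $\mathcal{X}$ and $\{B_j\}_{j \in J}$ of $\mathcal{Y}$ and set
$$
F \;:=\; E^Y_{\text{disc}} \circ (f \times f)(E^X_{\text{disc}}) \circ E^Y_{\text{disc}},
$$
which is an entourage in $\mathcal{Y}$ because $f$ is coarse and the coarse structure is closed under composition. The claim is that the span of $\mathbbm{1}_A^{f_*(C)} H_C$ over $F$-bounded $A \in \mathfrak{A}^Y$ is dense in $H_C$. Since the subspace $H_C^0$ from the definition of a coarse $\mathcal{X}$-module is already dense (taking the non-degeneracy gauge of $C$ to be $E^X_{\text{disc}}$), it suffices to absorb each $\mathbbm{1}_B^C H_C$, for $B \in \mathfrak{A}^X$ bounded, into the desired span.

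Splitting $B$ against $\{A_i\}$ — a finite decomposition by local finiteness of the partition together with boundedness of $B$ — I reduce to the case $B \subset A_i$ for some $i$. Then $f(B) \subset f(A_i)$ is $(f \times f)(E^X_{\text{disc}})$-bounded, hence (again by local finiteness, now of $\{B_j\}$) it meets only finitely many partition elements $B_{j_1}, \ldots, B_{j_n}$. Setting $A := B_{j_1} \cup \cdots \cup B_{j_n} \in \mathfrak{A}^Y$, a short chain of entourage memberships via auxiliary points in each $B_{j_k} \cap f(A_i)$ shows that $A$ is $F$-bounded. Since $B \subset f^{-1}(f(B)) \subset f^{-1}(A)$, one has $\mathbbm{1}_B^C = \mathbbm{1}_B^C \mathbbm{1}_{f^{-1}(A)}^C = \mathbbm{1}_B^C \mathbbm{1}_A^{f_*(C)}$, so $\mathbbm{1}_B^C H_C \subset \mathbbm{1}_A^{f_*(C)} H_C$, as required.

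No step is genuinely an obstacle here; the only point demanding care is the verification that the collective set $A$ is $F$-bounded, which is a bookkeeping exercise in the definition of $F$ and could easily be mis-written if the compositions on either side of $(f\times f)(E^X_{\text{disc}})$ are omitted.
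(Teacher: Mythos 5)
Your proof is correct and follows essentially the same route as the paper: fix discrete partitions of both spaces and use local finiteness of $\{B_j\}_{j\in J}$ together with coarseness of $f$ to absorb each $\mathbbm{1}_B^C H_C$ into the span generated by $\mathbbm{1}^{f_*C}_\bullet$ applied to bounded sets. The only difference is cosmetic: the paper keeps the finitely many $B_{j_k}$ separate and takes $E^Y_{\text{disc}}$ itself as the non-degeneracy gauge, so the enlarged gauge $F$ and the verification that the union $B_{j_1}\cup\cdots\cup B_{j_n}$ is $F$-bounded are not needed.
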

	\begin{proof}
    	Let $\{A_i\}_{i \in I}$ and $\{B_j\}_{j \in J}$ be discrete partitions of $\mathcal{X}$ and $\mathcal{Y}$ respectively. Observe that the set $\{f^{-1}(B_j)\}_{j \in J} \subset \mathfrak{A}^X$ forms a locally finite cover of $\mathcal{X}$. Consequently, for every $i \in I$, there exists a finite subset $J_i \subset J$ such that 
		$$
			A_i \subset \bigcup_{j \in J_i} f^{-1}(B_j), \qquad \qquad \mathbbm{1}_{A_i}^C H_C \subset \sum_{j \in J_i} \mathbbm{1}_{B_j}^{f_*C} H_C.
		$$
		The right-hand side of the above expression is merely the application of $\mathbbm{1}_{\bullet}^C$ to the left-hand side. The conclusion follows immediately.
	\end{proof}

	Denote the $\mathcal{Y}$-module $(\mathbbm{1}^{f_*(C)}_{\bullet}, H_C)$ by $f_*C$. We refer to $f_*(C)$ as the \emph{pushforward} of $C$ along $f$. Observe that the identity map on $H_C$, regarded as a bounded operator between $C$ and $f_*C$, is supported on $f$. In particular, the unitary
		$$
			U_f(C) \colon C \to f_*C, \quad U_f(C) = \operatorname{id}_{H_C}
		$$
	is a controlled unitary. Therefore, $\operatorname{Ad}_{U_f(C)}$ maps controlled propagation operators on $C$ to controlled propagation operators on $f_*C$, by Proposition \ref{proposition: properties of controlled operators}. Observe that the assumption of $\mathcal{Y}$ being LFCM is essential for Lemma \ref{lemma: Pushforward of a module} to hold; this is the primary justification for working with LFCM spaces.

	\begin{definition}
    	Let $\mathcal{X}$ and $\mathcal{Y}$ be LFCM spaces and $f \colon \mathcal{X} \to \mathcal{Y}$ be a coarse measurable map. Define a functor $f_* \colon \mathcal{A}(\mathcal{X}) \to \mathcal{A}(\mathcal{Y})$, which maps a coarse $\mathcal{X}$-module $C$ to its pushforward $f_*C$ along $f$, and a morphism $t \colon C_0 \to C_1$ to the conjugation $f_*t = U_f(C_1) t U_f(C_0)^*$.
	\end{definition}

	As $\operatorname{Ad}_{U_f(C)}$ preserves controlled propagation operators, $f_*$ is a well-defined functor. Note that $f_*(t^*) = f_*(t)^*$, so $f_*$ is a $*$-functor. In particular, the correspondence $\mathcal{X} \to \mathcal{A}(\mathcal{X})$ gives rise to a functor from the category of LFCM spaces and coarse maps to the category of all $C^*$-categories. Suppose we are given two coarse measurable maps $f, g \colon \mathcal{X} \to \mathcal{Y}$ that are close to each other. For a coarse $\mathcal{X}$-module $C$, consider the unitaries
	$$
		\eta_C \colon f_*(C) \to g_*(C), \quad \eta_C = U_g(C) U_f(C)^*.
	$$
	Note that the following diagram commutes for every pair of coarse $\mathcal{X}$-modules $C_0$ and $C_1$, and any approximable operator $t \colon C_0 \to C_1$:
	$$
		\begin{tikzcd}
    		f_*(C_0) \arrow{d}{\eta_{C_0}} \arrow{r}{f_*t} & f_*(C_1) \arrow{d}{\eta_{C_1}} \\
    		g_*(C_0) \arrow{r}{g_*t} & g_*(C_1)
		\end{tikzcd}
	$$
	It is important to observe that the support of $\eta_C$ is contained within $(f \times g)(E_{\text{disc}}^X)$, and since $f$ is close to $g$, the unitaries $\eta_C$ have controlled propagation.

	\begin{corollary}
    	For two coarse measurable maps $f, g \colon \mathcal{X} \to \mathcal{Y}$ that are close to each other, the induced functors $f_*, g_*$ are naturally isomorphic. In particular, the correspondence $\mathcal{X} \mapsto \mathcal{A}(\mathcal{X})$ extends to a functor between the category \textbf{LFCM} and the category of all $C^*$-categories, with classes of natural equivalence of functors as morphisms.
	\end{corollary}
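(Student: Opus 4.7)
The strategy is to take the unitaries $\eta_C$ exhibited immediately before the corollary and verify that they assemble into a natural isomorphism $\eta \colon f_* \Rightarrow g_*$ in the $C^*$-category sense, after which the functoriality of $\mathcal{A}$ on closeness classes is a matter of checking strict compatibility with composition of coarse measurable maps.

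First, I would verify that each $\eta_C$ is actually a morphism in $\mathcal{A}(\mathcal{Y})$, i.e. an approximable operator $f_*C \to g_*C$. The text has already observed that $\operatorname{Supp}(\eta_C) \subset (f \times g)(E^X_{\mathrm{disc}})$; since $f \sim g$, this set is an entourage of $\mathcal{Y}$, so $\eta_C$ has controlled propagation. Any controlled propagation operator is trivially approximable (take $s = t$, $\varepsilon = 0$ in the definition), hence $\eta_C \in \operatorname{Hom}_{\mathcal{A}(\mathcal{Y})}(f_*C, g_*C)$. Its inverse $\eta_C^{-1} = U_f(C) U_g(C)^*$ is likewise a morphism by the symmetric argument, so $\eta_C$ is an isomorphism in $\mathcal{A}(\mathcal{Y})$.

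Next, naturality: for $t \colon C_0 \to C_1$ approximable, a direct calculation using that every $U_{\bullet}(C)$ is the identity on the underlying Hilbert space yields
$$
\eta_{C_1} \circ f_*t \;=\; U_g(C_1) U_f(C_1)^* U_f(C_1) \, t \, U_f(C_0)^* \;=\; U_g(C_1)\, t \, U_f(C_0)^*,
$$
$$
g_*t \circ \eta_{C_0} \;=\; U_g(C_1) \, t \, U_g(C_0)^* U_g(C_0) U_f(C_0)^* \;=\; U_g(C_1) \, t \, U_f(C_0)^*,
$$
so the naturality square commutes on the nose. This establishes $\eta \colon f_* \Rightarrow g_*$ as a natural isomorphism of $*$-functors.

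For the second statement I would check the two remaining coherence points. Closeness is preserved under composition, so the assignment $[f] \mapsto f_*$ is well-defined on closeness classes modulo natural isomorphism. Moreover, for composable coarse measurable maps $f \colon \mathcal{X} \to \mathcal{Y}$ and $h \colon \mathcal{Y} \to \mathcal{Z}$, one checks directly from the definition $\mathbbm{1}_A^{f_*C} = \mathbbm{1}_{f^{-1}(A)}^C$ that $(h \circ f)_* C = h_*(f_*C)$ as coarse $\mathcal{Z}$-modules, and since all the unitaries $U_{\bullet}(C)$ are identity maps on Hilbert spaces, $(h \circ f)_* t = h_*(f_*t)$ as well; similarly $(\operatorname{id}_{\mathcal{X}})_* = \operatorname{id}_{\mathcal{A}(\mathcal{X})}$ on the nose. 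Thus $\mathcal{A}$ is strictly functorial at the level of coarse measurable maps, and the natural isomorphism $\eta$ constructed above descends the assignment to closeness classes, producing the desired functor into the category of $C^*$-categories with natural equivalence classes of $*$-functors as morphisms. There is no real obstacle here; the only subtlety worth stating carefully is confirming that $\eta_C$ indeed lives in the approximable category, which is immediate once controlled propagation has been verified.
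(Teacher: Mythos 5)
Your proposal is correct and follows essentially the same route as the paper: the paper's (largely implicit) argument consists precisely of defining $\eta_C = U_g(C)U_f(C)^*$, noting that $\operatorname{Supp}(\eta_C) \subset (f\times g)(E^X_{\text{disc}})$ is an entourage because $f \sim g$, and reading off naturality from the commuting square displayed just before the corollary. Your additional verification of strict functoriality $(h\circ f)_* = h_* \circ f_*$ on objects and morphisms is the right way to fill in the ``in particular'' clause, which the paper leaves to the reader.
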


	The approximable category possesses more structure than that of a mere $C^*$-category. In particular, it is finitely additive. We refer to \cite[Chapter 8, section 2]{maclane} for a comprehensive treatment of additive categories.
		
	\begin{definition}
		Let $C_0$, $C_1$ be coarse $\mathcal{X}$-modules. Consider the coarse $\mathcal{X}$-module $C_0 \oplus C_1$, whose underlying Hilbert space is $H_{C_0} \oplus H_{C_1}$, with the representation given by
		$$
			A \mapsto \mathbbm{1}_A^{C_0} \oplus \mathbbm{1}_A^{C_1}.
		$$
	\end{definition}
		
	Verifying that the representation defined above satisfies the nondegeneracy condition and thus defines a coarse $\mathcal{X}$-module is straightforward. The canonical inclusions $i_{k}\colon C_k \to C_0 \oplus C_1$, for $k = 0, 1$, are controlled propagation operators.
		
	\begin{lemma} \label{lemma: Finite coproducts}
    	Let $\mathcal{X}$ be a LFCM space. The direct sum of coarse $\mathcal{X}$-modules is the coproduct in the approximable category $\mathcal{A}(\mathcal{X})$.
	\end{lemma}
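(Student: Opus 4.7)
The plan is to verify the universal property of the coproduct directly. Given any coarse $\mathcal{X}$-module $D$ and any pair of approximable operators $t_0 \colon C_0 \to D$ and $t_1 \colon C_1 \to D$, the only bounded operator $t \colon H_{C_0} \oplus H_{C_1} \to H_D$ satisfying $t \circ i_k = t_k$ for $k = 0, 1$ is the map $(v_0, v_1) \mapsto t_0 v_0 + t_1 v_1$, by the universal property of the Hilbert space direct sum. So uniqueness comes for free, and the only real content of the lemma is that this operator $t$ is approximable.

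To establish approximability of $t$, I would fix $\varepsilon > 0$ and, using Proposition~\ref{proposition: approximable operators form a Banach space} together with the definition of approximability, pick entourages $E_0, E_1 \in \mathcal{E}^X$ and operators $s_k$ with $E_k$-controlled propagation such that $\|t_k - s_k\| \leq \varepsilon/2$ for $k = 0, 1$. Setting $E = E_0 \cup E_1$ (still an entourage) and $s(v_0, v_1) = s_0 v_0 + s_1 v_1$, a direct computation using the representation $\mathbbm{1}_A^{C_0 \oplus C_1} = \mathbbm{1}_A^{C_0} \oplus \mathbbm{1}_A^{C_1}$ gives
$$
\mathbbm{1}_B^D \, s \, \mathbbm{1}_A^{C_0 \oplus C_1}(v_0, v_1) = \mathbbm{1}_B^D s_0 \mathbbm{1}_A^{C_0} v_0 + \mathbbm{1}_B^D s_1 \mathbbm{1}_A^{C_1} v_1,
$$
so $\operatorname{Supp}(s) \subset \operatorname{Supp}(s_0) \cup \operatorname{Supp}(s_1) \subset E$. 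Then a routine norm estimate using $\|(v_0, v_1)\|^2 = \|v_0\|^2 + \|v_1\|^2$ bounds $\|t - s\|$ by $\sqrt{2}\,\varepsilon$ (or one can simply replace $\varepsilon$ by $\varepsilon / \sqrt{2}$ at the outset), which proves that $t$ is approximable.

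The verification that $t \circ i_k = t_k$ is immediate from the definitions of $i_k$ and $t$. The step I expect to require the most care is keeping the entourage estimates honest: one must ensure that a common entourage controls both approximants simultaneously, which is why one must pass to $E = E_0 \cup E_1$ before combining $s_0$ and $s_1$. Everything else reduces to the fact that the direct sum of Hilbert spaces is a biproduct and that finite unions of entourages are entourages.
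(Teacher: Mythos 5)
Your proposal is correct and follows essentially the same route as the paper: both verify the universal property via the Hilbert space direct sum and reduce the lemma to the approximability of the induced operator $t = t_0\pi_0 + t_1\pi_1$. The paper simply abbreviates the approximability step by writing $t = t_0 i_0^* + t_1 i_1^*$ and citing closure of approximable operators under sums, compositions and adjoints (using that the inclusions $i_k$ have controlled propagation), whereas you unpack that closure by hand with the explicit approximant $s$ and the entourage $E = E_0 \cup E_1$; both are fine.
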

	\begin{proof}
    	Since the underlying Hilbert space of $C_0 \oplus C_1$ is $H_{C_0} \oplus H_{C_1}$, for any coarse $\mathcal{X}$-module $C$ and approximable maps $\eta_k \colon C_k \to C$, with $k = 0, 1$, there exists a unique bounded operator $\eta \colon C_0 \oplus C_1 \to C$ given by
		$$
			\eta = \eta_1 i_1^* + \eta_2 i_2^*.
		$$
		Since sums, compositions, and adjoints of approximable operators are again approximable, the operator $\eta$ is a morphism in $\mathcal{A}(\mathcal{X})$, which ensures that the coproduct diagram commutes.
	\end{proof}
	
	It follows from \cite[page 228]{maclane} that $\mathcal{A}(\mathcal{X})$ is additive. Indeed, it is enriched in Banach spaces and, thus, in Abelian groups, making it a preadditive category. Since $\mathcal{A}(\mathcal{X})$ has all finite coproducts, it also has all finite products, which are biproducts. Consequently, $\mathcal{A}(\mathcal{X})$ is additive. Furthermore, $*$-functors are $\mathbb{C}$-linear by definition, and thus they are $\textbf{Ab}$-enriched, which implies they are additive.
		
	\begin{corollary}
		For a LFCM space $\mathcal{X}$, the approximable category $\mathcal{A}(\mathcal{X})$ is an additive $C^*$-category. The functors induced by coarse measurable maps are additive $*$-functors.
	\end{corollary}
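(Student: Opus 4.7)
The plan is to assemble the three structural ingredients that have already been established earlier in the section. First, $\mathcal{A}(\mathcal{X})$ is a $C^*$-category: morphism spaces are Banach by Proposition \ref{proposition: approximable operators form a Banach space}, composition preserves approximability by the support-tracking Lemma \ref{lemma: properties of support}(3), and the adjoint involution is available via part (1) of the same lemma together with the fact that the $C^*$-identity is inherited from the ambient spaces $B(H_{C_0},H_{C_1})$. Second, $\mathcal{A}(\mathcal{X})$ is preadditive: each morphism space is a Banach space, hence an abelian group, and bilinearity of composition is inherited from $B(\cdot,\cdot)$, so the hom-functor lands in abelian groups.

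Third, to upgrade preadditivity to additivity I would invoke Lemma \ref{lemma: Finite coproducts}, which supplies all finite coproducts. A standard fact (Mac Lane VIII.2) says that in a preadditive category with finite coproducts, every coproduct diagram is automatically a biproduct, hence also a product. Concretely, the inclusions $i_k\colon C_k\to C_0\oplus C_1$ have obvious retractions $p_k=i_k^*$ (which are controlled, hence approximable), satisfying $p_k i_k=\operatorname{id}$, $p_k i_{1-k}=0$, and $i_0 p_0+i_1 p_1=\operatorname{id}_{C_0\oplus C_1}$; this presents $C_0\oplus C_1$ as a biproduct, and an initial/terminal zero object is provided by the zero module. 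Thus $\mathcal{A}(\mathcal{X})$ satisfies the conditions of an additive category.

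For the second assertion, I would unpack the definition of $f_*$. On morphisms it is the map $t\mapsto U_f(C_1)\,t\,U_f(C_0)^*$ where $U_f(C)$ is the unitary identity supported on $f$; this map is evidently $\mathbb{C}$-linear and satisfies $f_*(t^*)=f_*(t)^*$, so $f_*$ is a $\mathbb{C}$-linear $*$-functor. A $\mathbb{C}$-linear functor between preadditive categories is in particular $\textbf{Ab}$-enriched, and any such functor between additive categories preserves finite biproducts up to canonical isomorphism, hence is additive. If one prefers an on-the-nose verification, observe that $\mathbbm{1}_A^{f_*(C_0\oplus C_1)}=\mathbbm{1}_{f^{-1}(A)}^{C_0}\oplus\mathbbm{1}_{f^{-1}(A)}^{C_1}=\mathbbm{1}_A^{f_*C_0}\oplus\mathbbm{1}_A^{f_*C_1}$, so $f_*$ strictly commutes with the direct sum of modules, and with the inclusion/projection data.

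There is no serious obstacle; the only delicate point is checking that the retractions used to exhibit biproducts genuinely lie in $\mathcal{A}(\mathcal{X})$, but this reduces to noting that $i_k$ has controlled propagation (its support sits inside $E^X_{\mathrm{disc}}$) and that Lemma \ref{lemma: properties of support}(1) makes $i_k^*$ automatically approximable. Everything else is bookkeeping on top of Lemma \ref{lemma: Finite coproducts} and Proposition \ref{proposition: approximable operators form a Banach space}.
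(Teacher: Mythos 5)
Your argument is correct and follows essentially the same route as the paper: preadditivity from the Banach-space enrichment of the Hom-sets, finite coproducts from Lemma \ref{lemma: Finite coproducts} upgraded to biproducts via the standard Mac Lane fact, and additivity of $f_*$ from its $\mathbb{C}$-linearity. The extra checks you include (the retractions $i_k^*$ being approximable, the strict compatibility of $f_*$ with direct sums) are sound and only make explicit what the paper leaves implicit.
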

	
	For a coarse $\mathcal{X}$-module $C$ the $C^*$-algebra $\operatorname{End}_{\mathcal{A}(\mathcal{X})}(C)$ of approximable operators on $H_C$ is called the \emph{algebra of approximable operators on $C$}. Our definition of the aforementioned $C^*$-algebra coincides with the one given in \cite{MVmodules, MVrigidity}.
	
	\subsection{Rigidity of $*$-isomorphisms for countably generated spaces}
	
	The rigidity problem concerns whether, given a $*$-isomorphism between the algebras of approximable operators on faithful modules, one can deduce a coarse equivalence between the underlying coarse spaces. This question was completely resolved for countably generated coarse spaces in \cite{BFBFKVW_rigidity} and \cite{MVrigidity}. In this section, we recall the strategy employed in \cite{MVrigidity}. Although the results apply to a more general setting than LFCM spaces, we focus on the LFCM case for simplicity.
	
	\begin{theorem}[\cite{MVrigidity}, Proposition 7.2] \label{theorem: spatial implementation}
		Let $\mathcal{X}$ and $\mathcal{Y}$ be LFCM spaces, and let $C_X$ and $C_Y$ be faithful coarse modules. Suppose 
		$$
			\phi \colon \operatorname{End}_{\mathcal{A}(\mathcal{X})}(C_X) \to \operatorname{End}_{\mathcal{A}(\mathcal{Y})}(C_Y)
		$$
		is a $*$-isomorphism. Then, there exists a unitary operator $U_{\phi} \colon H_{C_X} \to H_{C_Y}$ such that $\phi = \operatorname{Ad}_{U_{\phi}}$.
	\end{theorem}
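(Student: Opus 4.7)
The plan is to reduce the theorem to the classical fact that every $*$-isomorphism between algebras of compact operators on two Hilbert spaces is spatially implemented by a unitary. The crux is to identify $\mathcal{K}(H_{C_X})$ intrinsically inside $A_X := \operatorname{End}_{\mathcal{A}(\mathcal{X})}(C_X)$ in a manner respected by any $*$-isomorphism. First I would show $\mathcal{K}(H_{C_X}) \subseteq A_X$, and symmetrically for $C_Y$: for vectors $\xi \in \mathbbm{1}_A^{C_X}H_{C_X}$ and $\eta \in \mathbbm{1}_B^{C_X}H_{C_X}$ with $A,B \in \mathfrak{A}^X$ bounded, the rank-one operator $|\xi\rangle\langle\eta|$ has support contained in $A \times B \subseteq (A \cup B) \times (A \cup B)$, which is an entourage since $A \cup B$ is bounded. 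Hence such rank-one operators have controlled propagation; Proposition \ref{proposition: approximable operators form a Banach space} gives norm-closedness of $A_X$, and the density of $H_{C_X}^0$ then yields $\mathcal{K}(H_{C_X}) \subseteq A_X$.

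Next I would characterize $\mathcal{K}(H_{C_X})$ intrinsically in $A_X$ via the fact that a projection $p \in A_X$ satisfies $p A_X p = \mathbb{C}p$ if and only if $p$ is a rank-one projection. One direction is immediate: if $p$ has rank one, then $pA_Xp \subseteq pB(H_{C_X})p = \mathbb{C}p$. Conversely, if a minimal $p$ had rank $\geq 2$, pick linearly independent $\xi,\eta \in pH_{C_X}$; then $|\xi\rangle\langle\eta| \in \mathcal{K}(H_{C_X}) \subseteq A_X$ satisfies $p|\xi\rangle\langle\eta|p = |\xi\rangle\langle\eta|$, which is not a scalar multiple of $p$, a contradiction. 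Since $\phi$ is a $*$-isomorphism it maps minimal projections to minimal projections bijectively, so rank-one projections in $\mathcal{K}(H_{C_X})$ correspond to rank-one projections in $\mathcal{K}(H_{C_Y})$; passing to spans and norm-closures, $\phi$ restricts to a $*$-isomorphism $\mathcal{K}(H_{C_X}) \to \mathcal{K}(H_{C_Y})$.

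The classical spatial-implementation theorem for the compact operators then supplies a unitary $U_\phi \colon H_{C_X} \to H_{C_Y}$ with $\phi|_{\mathcal{K}(H_{C_X})} = \operatorname{Ad}_{U_\phi}$. To extend to all of $A_X$, for $a \in A_X$ and $k \in \mathcal{K}(H_{C_X})$ the product $ak$ is compact, so $\phi(a)\phi(k) = \phi(ak) = U_\phi (ak) U_\phi^* = (U_\phi a U_\phi^*)\phi(k)$, giving $(\phi(a) - U_\phi a U_\phi^*)\phi(k) = 0$ for every compact $k$; non-degeneracy of $\mathcal{K}(H_{C_Y})$ on $H_{C_Y}$ then forces $\phi(a) = U_\phi a U_\phi^*$ on all of $A_X$, as desired.

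The principal delicacy is the intrinsic identification of $\mathcal{K}(H_{C_X})$: one needs simultaneously that rank-one projections lie in and are minimal in $A_X$, and that no higher-rank projection is minimal in $A_X$. Both directions hinge on the containment $\mathcal{K}(H_{C_X}) \subseteq A_X$, which is where non-degeneracy of the module and the LFCM convention that bounded sets yield entourages enter. Faithfulness of the modules is used chiefly to guarantee that the Hilbert spaces are rich enough for the argument to produce a meaningful unitary; the core algebraic steps above work for any pair of non-degenerate coarse modules whose approximable algebras happen to be $*$-isomorphic.
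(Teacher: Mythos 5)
Your overall strategy (locate the compacts intrinsically, invoke spatial implementation for elementary $C^*$-algebras, extend by non-degeneracy) is the standard one; note the paper does not prove this statement itself but imports it from \cite{MVrigidity}. However, your first step contains a genuine error: the inclusion $\mathcal{K}(H_{C_X}) \subseteq \operatorname{End}_{\mathcal{A}(\mathcal{X})}(C_X)$ is false whenever $\mathcal{X}$ has more than one coarsely connected component, and the theorem is stated for arbitrary LFCM spaces, which may be badly disconnected. The point is that the union of two bounded sets $A$ and $B$ is bounded only if $A\times B$ is an entourage, i.e.\ only if $A$ and $B$ lie in the same coarse component; for $\xi$ supported in a component $X_1$ and $\eta$ in a different component $X_2$, the rank-one operator $|\xi\rangle\langle\eta|$ is not even approximable, since any controlled-propagation operator $s$ satisfies $\mathbbm{1}_{X_1} s\, \mathbbm{1}_{X_2}=0$, so $\||\xi\rangle\langle\eta| - s\|\ge \|\xi\|\|\eta\|$. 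This is consistent with the remark following the theorem in the paper, which records (via \cite[Corollary 6.18]{MVmodules}) only the componentwise inclusion $\mathcal{K}(\mathbbm{1}_{X_k}H_{C})\subseteq \operatorname{End}_{\mathcal{A}(\mathcal{X})}(C)$. The same issue infects your minimality argument: for a projection $p$ of rank $\ge 2$ the vectors $\xi,\eta\in pH_{C_X}$ you pick need not give an operator $|\xi\rangle\langle\eta|$ lying in the algebra.

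The gap is repairable along the same lines. Each $\mathbbm{1}_{X_k}$ is an approximable central projection, so every approximable projection commutes with all $\mathbbm{1}_{X_k}$; a minimal projection is therefore supported in a single component, and your rank-one characterisation goes through after restricting to $\mathbbm{1}_{X_k}H_{C_X}$. The closed ideal generated by the minimal projections is then the $c_0$-direct sum $\bigoplus_k \mathcal{K}(\mathbbm{1}_{X_k}H_{C_X})$ rather than $\mathcal{K}(H_{C_X})$. A $*$-isomorphism carries this ideal to the corresponding ideal of $\operatorname{End}_{\mathcal{A}(\mathcal{Y})}(C_Y)$, permuting the elementary blocks and acting as $\operatorname{Ad}_{u_k}$ on each; since $\operatorname{SOT-}\sum_k \mathbbm{1}_{X_k}=1$, the ideal acts non-degenerately, the block unitaries assemble into a single unitary $U_\phi$, and your final extension step (kill $\phi(a)-U_\phi a U_\phi^*$ against the ideal and use non-degeneracy) applies verbatim. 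For coarsely connected $\mathcal{X}$ and $\mathcal{Y}$ (e.g.\ genuine metric spaces) your proof is correct as written.
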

	
	The unitary obtained in Theorem \ref{theorem: spatial implementation} is not unique; it is unique up to central approximable unitaries. Specifically, for any two unitaries $U_{\phi}^1, U_{\phi}^2 \colon H_{C_X} \to H_{C_Y}$ that implement $\phi$, there exists a central unitary $V \in \operatorname{End}_{\mathcal{A}(\mathcal{X})}(C_X)$ such that $U_{\phi}^1 = U_{\phi}^2 V$. 
	\begin{remark}
		Let $\mathcal{X}$ be an LFCM space, and let $\{X_k\}_{k \in J}$ denote the decomposition of $X$ into its coarsely connected components. Each such component is measurable, as $X_k = E^X_{\text{disc}}[X_k]$ for every $k \in J$. Let $C$ be a coarse $\mathcal{X}$-module. The central unitaries of the algebra of approximable operators $\operatorname{End}_{\mathcal{A}(\mathcal{X})}(C)$ on $C$ are precisely of the form  
		$$
			U = \operatorname{SOT-} \sum_{k \in J} \alpha_k \mathbbm{1}_{X_k},
		$$  
		for some collection of complex numbers $\{\alpha_k\}_{k \in J}$ of modulus one. Indeed, by \cite[Corollary 6.18]{MVmodules}, the algebra $\operatorname{End}_{\mathcal{A}(\mathcal{X})}(C)$ contains the algebra of compact operators on $\mathbbm{1}_{X_k} H_C$ for each $k \in J$. Consequently,  
		$$
			\mathbbm{1}_{X_k} U = U \mathbbm{1}_{X_k} = \alpha_k \mathbbm{1}_{X_k}.
		$$  
		By summing in the strong operator topology, the result follows.
	\end{remark}
	The desired coarse equivalence is extracted from the unitaries $U_{\phi}$.
	
	\begin{definition}[\cite{MVrigidity}, Definition 6.3]
		Let $\mathcal{X}$ and $\mathcal{Y}$ be LFCM spaces, and let $T \colon H_{C_X} \to H_{C_Y}$ be a bounded operator. For $\delta > 0$, $F \in \mathcal{E}^Y$, and $E \in \mathcal{E}^X$, the approximate relation of $T$ is a subset of $Y \times X$ defined by
		$$
			f^T_{\delta, F, E} = \bigcup \{B \times A \mid B \in \mathfrak{A}^Y \text{ is } F \text{-bounded, } A \in \mathfrak{A}^X \text{ is } E \text{-bounded, } \|\mathbbm{1}_B^{C_Y} T \mathbbm{1}_A^{C_X}\| > \delta\}.
		$$
	\end{definition}
	
	\begin{remark} \label{remark:: folded approx relations when folded param}
		It is trivial to see that if $\delta_2 < \delta_1$, $F_1 \subset F_2 \in \mathcal{E}^Y$, and $E_1 \subset E_2 \in \mathcal{E}^X$, then the approximate relations satisfy $f^T_{\delta_1, F_1, E_1} \subset f^T_{\delta_2, F_2, E_2}$.
	\end{remark}
	
	For arbitrary bounded operators, approximate relations yield no information about the underlying LFCM spaces. However, a specific class of operators renders approximate relations as partial coarse maps.
	
	\begin{definition}[\cite{BVFcoronas}, Definition 3.2]
		An operator $T \colon H_{C_X} \to H_{C_Y}$ is said to be \emph{coarse-like} if, for every $\varepsilon > 0$ and $E \in \mathcal{E}^X$, there exists $F \in \mathcal{E}^Y$ such that the map $\operatorname{Ad}_T \colon B(H_{C_X}) \to B(H_{C_Y})$ sends $E$-controlled propagation contractions to $(\varepsilon, F)$-approximable operators.
	\end{definition}
	
	The authors of \cite{MVrigidity} use the term \emph{weakly-approximately controlled} to refer to coarse-like operators. Whether a unitary implementing a $*$-isomorphism between algebras of approximable operators on faithful modules over LFCM spaces is coarse-like remains an open problem. The following theorem was first established in \cite{BFVemb2019} for uniformly locally finite metric spaces.
	
	\begin{theorem}[\cite{MVrigidity}, Theorem 5.5] \label{theorem: Iso => unitary is coarse-like}
		Let $\mathcal{X}$ and $\mathcal{Y}$ be countably generated LFCM spaces, $C_X$ and $C_Y$ be faithful coarse modules, and let $U \colon H_{C_X} \to H_{C_Y}$ be a unitary operator such that $\operatorname{Ad}_{U}$ restricts to a $*$-isomorphism
		$$
			\operatorname{Ad}_{U} \colon \operatorname{End}_{\mathcal{A}(\mathcal{X})}(C_X) \to \operatorname{End}_{\mathcal{A}(\mathcal{Y})}(C_Y).
		$$
		Then $U$ is coarse-like.
	\end{theorem}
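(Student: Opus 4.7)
The plan is to argue by contradiction: if $U$ were not coarse-like, I would construct a single $E_0$-controlled propagation contraction on $C_X$ whose image under $\operatorname{Ad}_U$ fails to be approximable, contradicting the hypothesis that $\operatorname{Ad}_U$ restricts to a $*$-isomorphism between the algebras of approximable operators. The essential ingredients are countable generation of both coarse structures, which enables a diagonal subsequence argument, and the fact that approximable operators form a $C^*$-subalgebra closed under SOT-limits of uniformly separated blocks.

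Assume the conclusion fails. Then there exist $\varepsilon_0 > 0$ and $E_0 \in \mathcal{E}^X$ such that for every $F \in \mathcal{E}^Y$ one can find an $E_0$-controlled propagation contraction $t \colon H_{C_X} \to H_{C_X}$ with $U t U^*$ not $(\varepsilon_0, F)$-approximable. Fix an increasing generating sequence $F_1 \subset F_2 \subset \cdots$ of $\mathcal{E}^Y$ and pick corresponding contractions $t_n$. Failure of $(\varepsilon_0, F_n)$-approximability, combined with the discreteness of the partition of $\mathcal{Y}$ and a standard Hahn--Banach slicing, yields bounded measurable sets $B_n, B_n' \in \mathfrak{A}^Y$ with $(B_n \times B_n') \cap F_n = \emptyset$ and
$$
\|\mathbbm{1}_{B_n}^{C_Y} U t_n U^* \mathbbm{1}_{B_n'}^{C_Y}\| > \varepsilon_0 / 2.
$$
Pulling back via $U^*$ and exhausting the non-degenerate subspace $H_{C_X}^0$ by bounded measurable subsets of $\mathcal{X}$, one obtains bounded $A_n, A_n' \in \mathfrak{A}^X$ such that $\mathbbm{1}_{A_n}^{C_X}$ and $\mathbbm{1}_{A_n'}^{C_X}$ capture most of the mass producing the above matrix coefficient; since $t_n$ has $E_0$-controlled propagation, one may further arrange $A_n' \subset E_0[A_n]$. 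Now invoke countable generation of $\mathcal{E}^X$ and $\mathcal{E}^Y$ to extract, by a diagonal argument, a subsequence such that the sets $A_n \cup A_n'$ are pairwise $E$-separated in $\mathcal{X}$ for every $E \in \mathcal{E}^X$ and the sets $B_n \cup B_n'$ are pairwise $F$-separated in $\mathcal{Y}$ for every $F \in \mathcal{E}^Y$; the $\mathcal{Y}$-side separation uses that $(B_n \times B_n') \cap F_n = \emptyset$ and that the $F_n$ exhaust $\mathcal{E}^Y$.

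Finally, assemble
$$
T = \sum_{n \ge 1} \mathbbm{1}_{A_n}^{C_X} t_n \mathbbm{1}_{A_n'}^{C_X},
$$
where the sum converges in the strong operator topology. Pairwise $E_0$-disjointness of the $A_n \cup A_n'$ forces $\|T\| \leq 1$ and $\operatorname{Supp}(T) \subset E_0$, so $T$ has controlled propagation and lies in $\operatorname{End}_{\mathcal{A}(\mathcal{X})}(C_X)$. Consequently $U T U^*$ must be approximable. However, by construction $\|\mathbbm{1}_{B_n}^{C_Y} U T U^* \mathbbm{1}_{B_n'}^{C_Y}\| > \varepsilon_0/2$ for every $n$ while the pairs $(B_n, B_n')$ leave every entourage of $\mathcal{Y}$, so no $F$-controlled propagation operator can approximate $UTU^*$ to within $\varepsilon_0/2$. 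This contradicts the hypothesis and forces $U$ to be coarse-like. The principal obstacle is the combined witness- and subsequence-extraction of the middle paragraph: I must ensure that truncating $t_n$ to bounded regions preserves the lower bound on the bad matrix coefficient, that the separation conditions on $\{A_n, A_n'\}$ and $\{B_n, B_n'\}$ can be arranged simultaneously, and that the SOT-sum $T$ genuinely retains propagation $\leq E_0$; countable generation of both coarse structures is used essentially at each of these points.
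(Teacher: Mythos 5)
The paper does not actually prove this statement: it quotes it from Mart\'inez--Vigolo, and the remark immediately after it records that the real proof is a Baire-category (``uniformisation'') argument, which is where countable generation enters. Your proposal takes a more naive route, and it has a genuine gap at its very first extraction step. From the failure of $(\varepsilon_0,F_n)$-approximability of $S_n=Ut_nU^*$ you claim to produce a single pair of bounded measurable sets $B_n,B_n'$ with $(B_n\times B_n')\cap F_n=\emptyset$ and $\|\mathbbm{1}_{B_n}S_n\mathbbm{1}_{B_n'}\|>\varepsilon_0/2$. The contrapositive of that claim is: if every $F_n$-separated block of $S_n$ has norm at most $\varepsilon_0/2$, then $S_n$ is $(\varepsilon_0,F_n)$-approximable. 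This is a quantitative form of ``quasi-locality implies approximability'', and it is not a consequence of Hahn--Banach slicing: the norm distance from $S_n$ to the WOT-closed set of $F_n$-propagation operators can be large while being spread over infinitely many separated blocks, each individually small, and reassembling small blocks into a genuine approximant requires a bounded truncation procedure that is available only under extra hypotheses such as property~A (\v{S}pakula--Zhang); in the generality of this theorem it is precisely the hard open point. This is why the post-\v{S}pakula--Willett rigidity literature abandons block extraction: the cited proof instead shows that the sets $\{t : \|t\|\le 1,\ \operatorname{Supp}(t)\subset E_0,\ UtU^*\text{ is }(1/k,F_n)\text{-approximable}\}$ are closed in a suitable Polish topology on the ball of $E_0$-propagation operators and cover it, so Baire's theorem yields one with nonempty interior, and this is then upgraded to the uniform statement. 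Countable generation is used to make this a \emph{countable} cover, not to run a diagonal subsequence argument.

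A secondary, in principle fixable, issue: even granting the separated blocks, your concluding estimate $\|\mathbbm{1}_{B_n}UTU^*\mathbbm{1}_{B_n'}\|>\varepsilon_0/2$ concerns the full sum $T=\sum_m \mathbbm{1}_{A_m}t_m\mathbbm{1}_{A_m'}$ and not only its $m=n$ term; you must pass to a further subsequence making the cross terms $\mathbbm{1}_{B_n}U\mathbbm{1}_{A_m}t_m\mathbbm{1}_{A_m'}U^*\mathbbm{1}_{B_n'}$ for $m\neq n$ summably small (using that $\mathbbm{1}_{B}U\xi\to 0$ as $B$ escapes every entourage, for fixed $\xi$), since separation of the index sets alone does not prevent interference. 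But the first gap is the essential one, and it cannot be closed within your framework without an additional assumption on $\mathcal{Y}$.
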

	
	The assumption that the spaces are countably generated is essential for the proof strategy, as the core argument relies on the application of the Baire category theorem.
	
	\begin{theorem}[\cite{MVrigidity}, Corollary 6.20] \label{theorem: rigidity of $*$-isomorphisms}
		Let $\mathcal{X}$ and $\mathcal{Y}$ be countably generated LFCM spaces, and let $C_X$ and $C_Y$ be faithful coarse modules. Suppose $U \colon H_{C_X} \to H_{C_Y}$ is a unitary operator such that $\operatorname{Ad}_{U}$ restricts to a $*$-isomorphism:
		$$
			\operatorname{Ad}_{U} \colon \operatorname{End}_{\mathcal{A}(\mathcal{X})}(C_X) \to \operatorname{End}_{\mathcal{A}(\mathcal{Y})}(C_Y).
		$$
		Then, for any $\delta \in (0,1)$, there exist sufficiently large $F \in \mathcal{E}^Y$ and $E \in \mathcal{E}^X$ such that $f^U_{\delta, F, E}$ is a coarse equivalence with inverse $f^{U^*}_{\delta, E, F}$.

	\end{theorem}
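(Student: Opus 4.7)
The plan is to extract the coarse equivalence by analyzing the approximate relations of the implementing unitary. Since $\operatorname{Ad}_{U^*}$ is the inverse $*$-isomorphism, Theorem \ref{theorem: Iso => unitary is coarse-like} applies to both $U$ and $U^*$, yielding that each is coarse-like. A first observation is the operator-theoretic identity $\|\mathbbm{1}_B^{C_Y} U \mathbbm{1}_A^{C_X}\| = \|\mathbbm{1}_A^{C_X} U^* \mathbbm{1}_B^{C_Y}\|$, which gives the crucial transpose symmetry $f^U_{\delta, F, E} = (f^{U^*}_{\delta, E, F})^T$. In the relational setting, the inverse of a coarse equivalence is its transpose, so once $f^U_{\delta, F, E}$ is shown to be a coarsely surjective, densely defined partial coarse embedding, $f^{U^*}_{\delta, E, F}$ automatically serves as its inverse. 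By this symmetry, it suffices to establish two statements for both relations: they are controlled, and they are densely defined.

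For controlledness of $f^U_{\delta, F, E}$, I would fix $E \supseteq E^X_{\text{disc}}$ and an auxiliary entourage $E' \in \mathcal{E}^X$, and choose $\varepsilon$ small compared to $\delta$ (concretely $\varepsilon < \delta^2/2$). Coarse-likeness of $U$ applied to a sufficiently large enlargement of $E \circ E' \circ E$ yields $F_0 \in \mathcal{E}^Y$ such that $U \mathbbm{1}_A^{C_X} U^*$ is $\varepsilon$-close to an operator $s_A$ of $F_0$-controlled propagation, for all relevant bounded $A \subset X$. For $(B_1, A_1), (B_2, A_2) \in f^U_{\delta, F, E}$ with $A_1 \cup A_2$ being $E'$-bounded, the defining lower bound $\|\mathbbm{1}_{B_i}^{C_Y} U \mathbbm{1}_{A_i}^{C_X}\|^2 > \delta^2$ (obtained by squaring and using $\mathbbm{1}_{A_i} \leq \mathbbm{1}_{A_1 \cup A_2}$) together with the $\varepsilon$-approximation by $s_{A_1 \cup A_2}$ forces each $B_i$ into an $F_0$-neighborhood of the support of $s_{A_1 \cup A_2}$; since each $B_i$ is $F$-bounded, the pair $(B_1, B_2)$ then lies in an entourage of $\mathcal{Y}$ depending only on $F$, $F_0$, and $E'$. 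The symmetric argument for $f^{U^*}_{\delta, E, F}$ uses coarse-likeness of $U^*$.

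For dense definition of $f^U_{\delta, F, E}$, faithfulness of $C_X$ ensures coarse density of $E^X_{\text{disc}}$-bounded sets $A$ with $\mathbbm{1}_A^{C_X} \neq 0$. For such $A$ and a unit vector $v \in \mathbbm{1}_A^{C_X} H_{C_X}$, unitarity gives $\|Uv\| = 1$; coarse-likeness of $U$ concentrates the mass of $Uv$ in a tube of bounded height around $A$ in $Y$ modulo $\varepsilon$, and non-degeneracy of $C_Y$ together with local finiteness of the discrete partition expresses this tube as a locally finite union of $F$-bounded pieces. A pigeonhole argument then produces an $F$-bounded $B$ with $\|\mathbbm{1}_B^{C_Y} Uv\| > \delta$ for $F$ sufficiently large, giving $(B, A) \in f^U_{\delta, F, E}$, hence $\pi_X(f^U) \asymp X$. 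Dense definition of $f^{U^*}_{\delta, E, F}$ is handled symmetrically using faithfulness of $C_Y$ and coarse-likeness of $U^*$.

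The main obstacle is the coordinated choice of parameters. The approximation threshold $\varepsilon$, the auxiliary entourage $E'$, and the resulting $F_0$ from coarse-likeness must be balanced against $\delta$ so that the lower norm bounds survive the $\varepsilon$-approximation, and simultaneously $F$ must be chosen large enough to absorb the tubes around images of bounded sets. These constraints are mutually dependent and yield only an existence statement for $F$ and $E$, with no effective bound.
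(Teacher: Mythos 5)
The paper does not actually prove this statement: it is imported verbatim as \cite[Corollary 6.20]{MVrigidity}, so there is no internal proof to compare against. Judging your argument on its own terms, the architecture is the standard uniformisation route: coarse-likeness of $U$ and $U^*$ via Theorem \ref{theorem: Iso => unitary is coarse-like}, the transpose identity $f^{U^*}_{\delta,E,F}=(f^{U}_{\delta,F,E})^T$, and the reduction to showing that both relations are controlled and densely defined. The dense-definition part is vague at the concentration step (the assertion that coarse-likeness ``concentrates the mass of $Uv$ in a tube'' is essentially what must be proved, via a rank-one projection, its controlled approximant, and a pigeonhole over the discrete partition), but the intended argument is the correct one.

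The controlledness step, however, contains a genuine gap. From $\|\mathbbm{1}_{B_i}U\mathbbm{1}_{A_1\cup A_2}U^*\mathbbm{1}_{B_i}\|>\delta^2$ and the $\varepsilon$-approximation of $U\mathbbm{1}_{A_1\cup A_2}U^*$ by an operator $s$ of $F_0$-controlled propagation, you only learn that each $B_i\times B_i$ meets $\operatorname{Supp}(s)\subset F_0$. This places each $B_i$ near the support of $s$, but $\operatorname{Supp}(s)$ is merely \emph{some} subset of $F_0$ and may be coarsely dense in $Y$ (for instance, $s$ can differ from any given approximant by a norm-small perturbation supported on all of $\Delta_Y$), so two sets that both meet it can be arbitrarily far apart; nothing forces $(B_1,B_2)$ into a fixed entourage. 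Concretely, if $P=\mathbbm{1}_{A_1\cup A_2}$ has rank two with $U\xi_i$ supported in $B_i$, then $\mathbbm{1}_{B_2}UPU^*\mathbbm{1}_{B_1}=0$ even though both one-sided cut-downs have norm $1$, so no lower bound on the mixed cut-down is available from the projection. The repair is the usual rank-one trick: choose unit vectors $\xi_i\in\mathbbm{1}_{A_i}H_{C_X}$ with $\|\mathbbm{1}_{B_i}U\xi_i\|>\delta$ and conjugate the partial isometry $t=\xi_2\xi_1^*$, whose propagation is contained in $A_2\times A_1\subset E\circ E'\circ E$; then $\mathbbm{1}_{B_2}UtU^*\mathbbm{1}_{B_1}=(\mathbbm{1}_{B_2}U\xi_2)(\mathbbm{1}_{B_1}U\xi_1)^*$ has norm $>\delta^2$, so for $\varepsilon<\delta^2$ the $F_0$-controlled approximant of $UtU^*$ has nonzero cut-down by $\mathbbm{1}_{B_2}$ and $\mathbbm{1}_{B_1}$, forcing $B_2\times B_1\subset F\circ F_0\circ F$. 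It is this two-sided cut-down of a single operator carrying $\mathbbm{1}_{A_1}H_{C_X}$ into $\mathbbm{1}_{A_2}H_{C_X}$ that links $B_1$ to $B_2$; the projection $\mathbbm{1}_{A_1\cup A_2}$ cannot play that role.
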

	
	Compare the above results with their analogues in \cite{SWrigidity, BF_rigidity, BV_GelfandTypeDuality, BFBFKVW_rigidity}.
	
	\section{Domains of coarse modules}
	
	Some modules more effectively capture the geometry of the underlying LFCM spaces than others. The following two examples demonstrate this phenomenon.
	
	\begin{example} \label{example: uniform module}
		Let $\mathcal{X}$ be an LFCM space with a discrete partition $\{A_i\}_{i \in I}$. Suppose that a subset $B \subset X$ is measurable if and only if it can be expressed as a disjoint union of elements of this partition. For a measurable subset $A \subset X$, define  
		$$ 
			I_A = \{i \in I \mid A_i \subset A\}.
		$$ 
		Consider the representation of $\mathfrak{A}^X$ on $\ell^2(I)$ given by  
		$$  
			\mathfrak{A}^X \ni A \mapsto \mathbbm{1}_A^u = \text{SOT-}\sum_{i \in I_A} \mathbbm{1}_i \in \operatorname{Proj}(\mathcal{B}(\ell^2(I))),  
		$$  
		where $\mathbbm{1}_i$ denotes the rank-one projection onto the span of $\delta_i \in \ell^2(I)$. This representation is non-degenerate, as the linear span of $\{\delta_i \mid i \in I\}$ is dense in $\ell^2(I)$. When the discrete partition consists of singletons, the resulting module is commonly referred to as \emph{the uniform module}. For a general LFCM space $\mathcal{X} = (X, \mathcal{E}^X, \mathfrak{A}^X)$, fix a discrete partition $\{A_i\}_{i \in I}$. Define an LFCM space $\mathcal{I} = (I, \mathcal{E}^I, \mathcal{P}(I))$, as in the proof of Proposition \ref{proposition: discritization of LFCM spaces}. The inclusion map $i \colon \mathcal{I} \to \mathcal{X}$ is a bimeasurable coarse equivalence, allowing one to consider the pushforward $i_*C_u$ of the uniform module $C_u$ of $\mathcal{I}$ to $\mathcal{X}$. Note that the pushforward representation satisfies  
		$$
			\mathbbm{1}^{i_*C_u}_{A_j} = \mathbbm{1}^{C_u}_{i^{-1}(A_j)} = \mathbbm{1}^{C_u}_{\{j\}} \neq 0,
		$$
		for all $j \in I$.
	\end{example}

	\begin{example} \label{example: bounded module}
		Let $\mathcal{X}$ be a LFCM space. Fix a point $x \in X$, and consider a representation of $\mathfrak{A}^X$ on $\mathbb{C}$ defined by
		$$
			\mathfrak{A}^X \ni A \mapsto \mathbbm{1}^{x}_A = \begin{cases}
					1, & \text{if } x \in A; \\
					0, & \text{otherwise.}
				\end{cases}
		$$
		The non-degeneracy condition is satisfied trivially. A representation of a similar nature may be defined on a Hilbert space of any dimension. This module will be referred to as a bounded module.
	\end{example}

	These two modules exhibit fundamentally different characteristics. In many instances, the algebra of approximable operators on the uniform module fully encodes the coarse geometry of $\mathcal{X}$ (see \cite{BV_GelfandTypeDuality, BBFVW_BijCoaExpanders} for the case of uniformly locally finite spaces). By contrast, the algebra of approximable operators on a bounded module is always $\mathbb{C}$ and contains no information about the underlying coarse space. It is desirable to construct invariants that distinguish these cases.
	
	\begin{definition}\label{definition:: domains}
		Let $\mathcal{X}$ be a LFCM space, and $C$ be a coarse $\mathcal{X}$-module. For a cardinal $\kappa$, the $\kappa$-domain of $C$ is defined as the subset of $\mathcal{X}$ given by
		$$
			\operatorname{dom}_{\kappa}(C) = \bigcup \{A \in \mathfrak{A}^X \mid A \text{ is } E_{\text{disc}}^X\text{-bounded and } \text{rk}(\mathbbm{1}^C_A) \geq \kappa\}.
		$$
		If $\kappa = 1$, $\operatorname{dom}_{\kappa}(C)$ is called the faithfulness domain of $C$, while for $\kappa = \aleph_0$, it is called the ampleness domain of $C$. A coarse module $C$ is said to be faithful (respectively, ample) if its faithfulness (respectively, ampleness) domain is asymptotic to $\mathcal{X}$.
	\end{definition}
	% Comment by Federico
	Observe that for a finite cardinal $\kappa \neq 1$, the asymptotic equivalence class of the $\kappa$-domain depends on the choice of discreteness gauge. To illustrate this, consider a space $\mathcal{X}$ which is a connected pair of points:
	$$
		\mathcal{X} = (\{x,y\}, \mathcal{P}(\{x,y\} \times \{x,y\}), \mathcal{P}(\{x,y\})).
	$$
	Let $C$ be a coarse $\mathcal{X}$-module, whose underlying Hilbert space is $\mathbb{C}^2$, and the representation is given by mapping each point to a one-dimensional projection. Consider two distinct choices of discreteness gauges for $C$:
	$$
		E_1 = \Delta_X, \qquad E_2 = X \times X.
	$$
	With respect to $E_1$, the $2$-domain of $C$ is empty, whereas for $E_2$, the $2$-domain of $C$ coincides with the entire set $X$. This phenomenon does not arise for infinite cardinals $\kappa$ or for $\kappa = 1$, as demonstrated in the following lemma.

	\begin{lemma}
		Let $\mathcal{X}$ be a LFCM space, and $C$ be a coarse $\mathcal{X}$-module. For an infinite cardinal $\kappa$, or $\kappa = 1$, the asymptotic equivalence class of $\kappa$-domain of $C$ is independent of the choice of the discreteness gauge.
	\end{lemma}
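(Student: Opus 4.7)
The plan is to fix two discreteness gauges $E_1$ and $E_2$, with $E_1$ controlling a chosen discrete partition $\{A_i\}_{i \in I}$ of $\mathcal{X}$, and argue by symmetry that it suffices to prove $\operatorname{dom}^{E_2}_{\kappa}(C) \prec \operatorname{dom}^{E_1}_{\kappa}(C)$, where the superscript indicates which gauge is used in the definition.

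I take an arbitrary $E_2$-bounded measurable set $A$ with $\operatorname{rk}(\mathbbm{1}^C_A) \geq \kappa$ and aim to show $A \subset E_2[\operatorname{dom}^{E_1}_{\kappa}(C)]$. Since $A$ is bounded and $\{A_i\}$ is locally finite, the intersection $A \cap A_i$ is nonempty only for finitely many indices $i_1, \dots, i_n \in I$, and the Boolean algebra structure ensures each $A \cap A_{i_k}$ is measurable. Thus $\mathbbm{1}^C_A = \sum_{k=1}^n \mathbbm{1}^C_{A \cap A_{i_k}}$ decomposes as a finite sum of mutually orthogonal projections, whose ranks add to $\operatorname{rk}(\mathbbm{1}^C_A)$.

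The key observation is that the condition on the rank transfers to at least one summand, both when $\kappa = 1$ (nonzero sum forces a nonzero summand) and when $\kappa$ is infinite (a finite sum of cardinals attaining an infinite value forces one of them to attain that value). In either case there exists $k$ with $\operatorname{rk}(\mathbbm{1}^C_{A \cap A_{i_k}}) \geq \kappa$, so $A \cap A_{i_k} \subset \operatorname{dom}^{E_1}_{\kappa}(C)$ because $A \cap A_{i_k} \subset A_{i_k}$ is $E_1$-bounded. Picking any $x \in A \cap A_{i_k}$, the $E_2$-boundedness of $A$ yields $A \subset E_2[\{x\}] \subset E_2[\operatorname{dom}^{E_1}_{\kappa}(C)]$, which gives the desired subordination after taking the union over all such $A$.

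There is no substantial obstacle here: the argument is purely combinatorial once the decomposition via the discrete partition is set up, and the dichotomy between the infinite and $\kappa = 1$ cases versus finite $\kappa > 1$ is exactly the place where additivity of ranks interacts with the hypothesis. This also clarifies why the lemma fails for finite $\kappa > 1$, consistent with the example of the two-point connected space given just before the statement.
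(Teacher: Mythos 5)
Your proof is correct and follows essentially the same route as the paper: decompose the bounded set along the finitely many elements of a locally finite discrete partition that it meets, use additivity of ranks of orthogonal projections together with the fact that $\kappa$ is infinite or equal to $1$ to locate one piece of rank at least $\kappa$, and then absorb $A$ into an entourage-neighbourhood of that piece. The only (cosmetic) difference is that you compare two arbitrary gauges symmetrically via the exact decomposition $\mathbbm{1}_A = \sum_k \mathbbm{1}_{A \cap A_{i_k}}$, whereas the paper compares a given gauge to the minimal gauge $\bigsqcup_i A_i \times A_i$ using the upper bound $\mathbbm{1}_A \le \sum_k \mathbbm{1}_{A_{i_k}}$.
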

	\begin{proof}
		Suppose given a discrete partition $\{A_i\}_{i \in I}$, and a discreteness gauge $E^X_{\text{disc}}$. Consider an alternative discreteness gauge  
		$$
			F^X_{\text{disc}} = \bigsqcup_{i \in I} A_i \times A_i,
		$$
		Let $\operatorname{dom}_{\kappa}^E(C)$ and $\operatorname{dom}_{\kappa}^F(C)$ denote the $\kappa$-domains of $C$ with respect to the initial discreteness gauge $E^X_{\text{disc}}$ and the alternative gauge $F^X_{\text{disc}}$, respectively. Since $F^X_{\text{disc}}$ is contained within $E^X_{\text{disc}}$, every $F^X_{\text{disc}}$-bounded set is necessarily $E^X_{\text{disc}}$-bounded, implying that
		$$
			\operatorname{dom}_{\kappa}^F(C) \subset \operatorname{dom}_{\kappa}^E(C).
		$$
		Conversely, every $E^X_{\text{disc}}$-bounded set $A$ is contained within a finite union $\bigcup_{k=1}^n A_{i_k}$ of elements of the discrete partition, where $A \cap A_{i_k} \neq \emptyset$ for all $1 \leq k \leq n$. Consequently, for an $E^X_{\text{disc}}$-bounded set $A$ satisfying $\operatorname{rk}(\mathbbm{1}_A) \geq \kappa$, we obtain
		$$
			\kappa \leq \operatorname{rk}(\mathbbm{1}_A) \leq \operatorname{rk} \left(\sum_{k=1}^n \mathbbm{1}_{A_{i_k}} \right).
		$$
		Since $\kappa$ is either an infinite cardinal or equal to $1$, it follows that for some $1 \leq k \leq n$, we have $\operatorname{rk}(\mathbbm{1}_{A_{i_k}}) \geq \kappa$. Furthermore, as $A$ is $E^X_{\text{disc}}$-bounded and intersects $A_{i_k}$, it follows that $E^X_{\text{disc}}[A_{i_k}] \supset A$. Thus, we conclude that  
		$$
			\operatorname{dom}_{\kappa}^E \subset E^X_{\text{disc}}[\operatorname{dom}_{\kappa}^F].
		$$
		In particular, the $\kappa$-domains corresponding to the two discreteness gauges are asymptotically equivalent.
	\end{proof}
	
	From now on the term $\kappa$-domain will apply only to infinite cardinals $\kappa$, or $\kappa = 1$. Henceforth, we shall work with measurable representatives of $\kappa$-domains. Observe that if the discreteness gauge is given by  
	$$
		E^X_{\text{disc}} = \bigsqcup_{i \in I} A_i \times A_i,
	$$
	for a discrete partition $\{A_i\}_{i \in I}$, then all $\kappa$-domains are measurable. Indeed, if $A \subset X$ is $E^X_{\text{disc}}$-bounded, then it is contained within some partition element $A_i$, which implies
	$$
		\operatorname{rk}(\mathbbm{1}_A) \leq \operatorname{rk}(\mathbbm{1}_{A_i}).
	$$
	Thus, any $\kappa$-domain is a disjoint union of elements of the discrete partition and is consequently measurable. Following this observation, when referring to $\kappa$-domains, we shall always consider them with respect to the discreteness gauge $E^X_{\text{disc}}$ introduced above.
	% End of comment
	
	Faithful and ample modules for proper metric spaces were introduced in \cite{RoeCCaIToCRM}. An equivalent topologically independent definition was provided in \cite{MVmodules}. Note that our definition of faithful and ample modules coincides with that in \cite{MVmodules}.
	
	\begin{definition}
		Let $\mathcal{X}$ be a LFCM space, and let $C_0$, $C_1$ be coarse $\mathcal{X}$-modules. A bounded operator $t \colon H_{C_0} \to H_{C_1}$ is said to be \emph{coarsely full} if $\pi_1(\operatorname{Supp}(t)) \asymp \operatorname{dom}_1(C_1)$.
	\end{definition} 
	
	Coarsely full operators are particularly useful when comparing the domains of coarse modules. Indeed, given two coarse modules $C_0$ and $C_1$ and a coarsely full operator $t \colon H_{C_0} \to H_{C_1}$, we have
	\begin{equation*}
		\begin{split}
			\operatorname{dom}_1(C_1) & \asymp \pi_1(\operatorname{Supp}(t)) = \operatorname{Supp}(t)[\operatorname{dom}_1(C_0)].
		\end{split}
	\end{equation*}
	In particular, if there exists a coarsely full, controlled propagation operator $t \colon C_0 \to C_1$ between coarse $\mathcal{X}$-modules $C_0$ and $C_1$, then the faithfulness domain of $C_1$ is subordinated to that of $C_0$. The following lemma presents examples of coarsely full operators.
	 
	\begin{lemma} \label{lemma: Invertible operator is fully supported}
		Let $\mathcal{X}$ be a LFCM space and $C_0, C_1$ be coarse $\mathcal{X}$-modules. Any bounded invertible operator $g \colon H_{C_0} \to H_{C_1}$ is coarsely full.
	\end{lemma}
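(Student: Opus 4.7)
The plan is to prove the stronger set-level equality
\[
\pi_1(\operatorname{Supp}(g)) = \operatorname{dom}_1(C_1),
\]
from which the required asymptotic equivalence is immediate. The inclusion $\pi_1(\operatorname{Supp}(g)) \subseteq \operatorname{dom}_1(C_1)$ is a triviality and does not use invertibility: if $B$ is $E^X_{\text{disc}}$-bounded and appears in $\pi_1(\operatorname{Supp}(g))$, then $\mathbbm{1}_B^{C_1} g \mathbbm{1}_A^{C_0} \neq 0$ for some bounded $A$, which forces $\mathbbm{1}_B^{C_1} \neq 0$ and hence $B \subseteq \operatorname{dom}_1(C_1)$.

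For the reverse inclusion I would fix an $E^X_{\text{disc}}$-bounded $B \subseteq \operatorname{dom}_1(C_1)$ with $\mathbbm{1}_B^{C_1} \neq 0$ and produce a single element $A_i$ of a fixed discrete partition $\{A_i\}_{i \in I}$ of $\mathcal{X}$ such that $\mathbbm{1}_B^{C_1} g \mathbbm{1}_{A_i}^{C_0} \neq 0$; since we are working with the discreteness gauge in the form $E^X_{\text{disc}} = \bigsqcup_i A_i \times A_i$, this places $B \times A_i$ inside $\operatorname{Supp}(g)$ and hence $B \subseteq \pi_1(\operatorname{Supp}(g))$. To locate such an $A_i$, I would pick a unit vector $v$ with $\mathbbm{1}_B^{C_1} v = v$, use invertibility to set $w := g^{-1} v \in H_{C_0}$, and expand $w$ along the discrete partition: the family $\{\mathbbm{1}_{A_i}^{C_0}\}_{i \in I}$ consists of pairwise orthogonal projections whose SOT sum equals the identity (this follows from non-degeneracy together with the explicit form of the discreteness gauge), so $w = \sum_{i \in I} \mathbbm{1}_{A_i}^{C_0} w$ converges in norm.

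Choosing a finite $F \subseteq I$ with the partial sum $w_F := \sum_{i \in F} \mathbbm{1}_{A_i}^{C_0} w$ satisfying $\|w - w_F\| < 1/(2\|g\|)$, a single triangle-inequality estimate gives
\[
\|\mathbbm{1}_B^{C_1} g w_F\| \geq \|\mathbbm{1}_B^{C_1} g w\| - \|g\| \cdot \|w - w_F\| = \|v\| - \|g\| \cdot \|w - w_F\| > 1/2,
\]
so at least one summand $\mathbbm{1}_B^{C_1} g \mathbbm{1}_{A_i}^{C_0} w$ with $i \in F$ must be nonzero, yielding the desired $A_i$ and completing the argument.

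I do not foresee a serious obstacle: invertibility is used only through the existence of a preimage $w$ of the test vector $v$, and all remaining work is bookkeeping with the non-degeneracy condition. The one mildly subtle point is to decompose $w$ along the partition elements $A_i$ themselves, rather than along arbitrary $E^X_{\text{disc}}$-bounded measurable subsets, so that the nonzero summand produces a product $B \times A_i$ of two $E^X_{\text{disc}}$-bounded sets inside $\operatorname{Supp}(g)$; for this the explicit form $E^X_{\text{disc}} = \bigsqcup_i A_i \times A_i$ adopted after Definition \ref{definition: discrete space} is essential.
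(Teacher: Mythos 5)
Your proposal is correct and is essentially the same argument as the paper's: both reduce to the observation that invertibility forces $\mathbbm{1}_B^{C_1} g \neq 0$ whenever $\mathbbm{1}_B^{C_1} \neq 0$, and then use the SOT resolution of the identity coming from the discrete partition to locate a bounded measurable $A$ with $\mathbbm{1}_B^{C_1} g \mathbbm{1}_A^{C_0} \neq 0$. The only cosmetic difference is that you run the localisation at the level of a single vector $v = \mathbbm{1}_B^{C_1} v$ and its preimage $g^{-1}v$, whereas the paper argues by contradiction at the operator level by summing $\mathbbm{1}_A^{C_1} g \mathbbm{1}_B^{C_0}$ over all bounded $B$ in the strong operator topology.
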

	\begin{proof}
		Let $A \subset \operatorname{dom}_1(C_1)$ be a measurable and $E^X_{\text{disc}}$-bounded subset of $X$. Suppose that 
		$$
			A \subset X \setminus \pi_1(\operatorname{Supp}(g)).
		$$
		Then, for every $E^X_{\text{disc}}$-bounded set $B \in \mathfrak{A}^X$, we have $\mathbbm{1}^{C_1}_A g \mathbbm{1}^{C_0}_B = 0$. By summing over all such $B$ in the strong operator topology, it follows that $\mathbbm{1}^{C_1}_A g = 0$. Since $g$ is invertible, this implies $\mathbbm{1}_A^{C_1} = 0$, so that $A \subset X \setminus \operatorname{dom}_1(C_1)$, contradicting the initial assumption. This establishes that 
		$$
			\operatorname{dom}_1(C_1) \subset E^X_{\text{disc}}[\pi_1(\operatorname{Supp}(g))].
		$$
		Since the reverse inclusion always holds, we conclude that
		$$
			\operatorname{dom}_1(C_1) \asymp \pi_1(\operatorname{Supp}(g)),
		$$
		and thus, $g$ is coarsely full.
	\end{proof}
	
	The set of coarsely full operators is not limited to invertible ones. For instance, if $\mathcal{X}$ is a bounded coarse space, then any non-zero bounded operator between two coarse $\mathcal{X}$-modules is coarsely full. Our objective is to demonstrate that two isomorphic coarse $\mathcal{X}$-modules in $\mathcal{A}(\mathcal{X})$ possess asymptotically equivalent $\kappa$-domains, where $\kappa$ is either an infinite cardinal or $\kappa = 1$. To this end, we first establish a technical lemma.
	
	\begin{lemma} \label{lemma: translations of kappa-domain}
		Let $\mathcal{X}$ be a LFCM space, and let $C^0$ and $C^1$ be coarse $\mathcal{X}$-modules. Suppose that $t \colon C^0 \to C^1$ is an invertible controlled propagation operator. Then, for every $E^X_{\text{disc}}$-bounded measurable subset $B$ of $X$ satisfying $\operatorname{rk}(\mathbbm{1}_B^{C_1}) \geq \kappa$, there exists an element $A$ of the discrete partition $\{A_i\}_{i \in I}$ of $\mathcal{X}$ such that
		$$
			\operatorname{rk}(\mathbbm{1}_{A}^{C_0}) \geq \kappa \quad \text{ and } \quad \mathbbm{1}_{B}^{C_1} t \mathbbm{1}_{A}^{C_0} \neq 0.
		$$
	\end{lemma}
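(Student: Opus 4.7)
The plan is to use the controlled propagation of $t$ to localise $\mathbbm{1}_B^{C_1} t$ onto a finite collection of partition elements intersecting $\operatorname{Supp}(t)^T[B]$, and then extract the required partition element by a rank count that exploits the surjectivity of $t$.

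First, I would set $E := \operatorname{Supp}(t) \in \mathcal{E}^X$. By the very definition of support, for any partition element $A_i$ with $\mathbbm{1}_B^{C_1} t \mathbbm{1}_{A_i}^{C_0} \neq 0$ one necessarily has $B \times A_i \subset E$, and hence $A_i \subset E^T[B]$. Since $B$ is $E^X_{\text{disc}}$-bounded and $E$ is an entourage, $E^T[B]$ is bounded in $\mathcal{X}$; by local finiteness of the discrete partition, only finitely many partition elements, say $A_{i_1}, \ldots, A_{i_n}$, can meet $E^T[B]$. Using SOT convergence of $\sum_{i \in I} \mathbbm{1}_{A_i}^{C_0}$ to the identity of $H_{C_0}$, one obtains the finite decomposition
$$
\mathbbm{1}_B^{C_1} t = \sum_{k=1}^n \mathbbm{1}_B^{C_1} t \mathbbm{1}_{A_{i_k}}^{C_0},
$$
as every other term vanishes.

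Next, since $t$ is invertible as a bounded operator, it is in particular surjective, so $\mathbbm{1}_B^{C_1} t(H_{C_0}) = \mathbbm{1}_B^{C_1} H_{C_1}$ has dimension $\operatorname{rk}(\mathbbm{1}_B^{C_1}) \geq \kappa$. The decomposition above shows that this image is contained in $\sum_{k=1}^n \mathbbm{1}_B^{C_1} t(\mathbbm{1}_{A_{i_k}}^{C_0} H_{C_0})$, and therefore
$$
\kappa \leq \operatorname{rk}(\mathbbm{1}_B^{C_1}) \leq \sum_{k=1}^n \operatorname{rk}\bigl(\mathbbm{1}_B^{C_1} t \mathbbm{1}_{A_{i_k}}^{C_0}\bigr) \leq \sum_{k=1}^n \operatorname{rk}(\mathbbm{1}_{A_{i_k}}^{C_0}).
$$
Both for $\kappa = 1$ and for infinite $\kappa$ (where $n \cdot \lambda = \lambda$ for any $\lambda \geq \aleph_0$), a finite sum of cardinals all strictly below $\kappa$ cannot reach $\kappa$; hence some index $k$ satisfies $\operatorname{rk}(\mathbbm{1}_B^{C_1} t \mathbbm{1}_{A_{i_k}}^{C_0}) \geq \kappa > 0$, which in turn forces $\operatorname{rk}(\mathbbm{1}_{A_{i_k}}^{C_0}) \geq \kappa$ and $\mathbbm{1}_B^{C_1} t \mathbbm{1}_{A_{i_k}}^{C_0} \neq 0$. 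Taking $A = A_{i_k}$ proves the lemma.

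The main obstacle is the cardinality bookkeeping: one has to justify carefully that a finite sum of cardinals strictly below $\kappa$ remains strictly below $\kappa$, so that at least one summand necessarily reaches $\kappa$ while simultaneously guaranteeing the nonvanishing condition. Beyond this, everything follows directly from the support identities of Lemma \ref{lemma: properties of support} and the surjectivity granted by invertibility of $t$.
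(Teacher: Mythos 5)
Your proposal is correct and follows essentially the same route as the paper: both use controlled propagation to confine $\mathbbm{1}_B^{C_1}t$ to a bounded region meeting only finitely many partition elements, then apply rank subadditivity together with the cardinal pigeonhole for $\kappa=1$ or $\kappa$ infinite, with invertibility of $t$ supplying $\operatorname{rk}(\mathbbm{1}_B^{C_1}t)=\operatorname{rk}(\mathbbm{1}_B^{C_1})$. The only cosmetic difference is that the paper first passes through the intermediate bounded set $A' = E^X_{\text{disc}}\circ\operatorname{Supp}(t)^T[B]$ before covering it by partition elements, whereas you go to the finite family of partition elements directly.
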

	\begin{proof}
		Define $A' = E^X_{\text{disc}} \circ \operatorname{Supp}(t)^T[B]$. Since $t$ has controlled propagation, the set $\operatorname{Supp}(t)^T$ is an entourage, implying that $A'$ is a bounded, measurable subset of $X$. Observe that $\mathbbm{1}_B^{C_1} t \mathbbm{1}_{A'}^{C_0} = \mathbbm{1}_B^{C_1} t$, leading to the chain of inequalities
		$$
			\operatorname{rk}(\mathbbm{1}_{A'}^{C_0}) \ge \operatorname{rk}(\mathbbm{1}_B^{C_1} t \mathbbm{1}_{A'}^{C_0}) \ge \operatorname{rk}(\mathbbm{1}_B^{C_1} t) = \operatorname{rk}(\mathbbm{1}_B^{C_1}) \ge \kappa.
		$$
		Here, the final equality holds due to the invertibility of $t$. Since $A'$ is bounded, it is contained in a finite union of elements of the discrete partition $\{A_{i_k}\}_{k=1}^n$. Given that $\kappa$ is either an infinite cardinal or $\kappa = 1$, the inequality
		$$
			\kappa \le \operatorname{rk}(\mathbbm{1}_B^{C_1} t \mathbbm{1}_{A'}^{C_0}) \le \sum_{k=1}^n \operatorname{rk}(\mathbbm{1}_B^{C_1} t \mathbbm{1}_{A_{i_k}}^{C_0}) 
		$$
		implies that for some $1 \le k \le n$, the rank of $\mathbbm{1}_B^{C_1} t \mathbbm{1}_{A_{i_k}}^{C^0}$ must be at least $\kappa$. Since $t$ is invertible, we conclude
		$$
			\kappa \le \operatorname{rk}(\mathbbm{1}_B^{C_1} t \mathbbm{1}_{A_{i_k}}^{C_0}) \le \operatorname{rk}(t \mathbbm{1}_{A_{i_k}}^{C_0}) = \operatorname{rk}(\mathbbm{1}_{A_{i_k}}^{C_0}).
		$$
		Hence, setting $A = A_{i_k}$ satisfies the desired conditions, completing the proof of the lemma.
	\end{proof}
	
	We are now equipped to prove that the $\asymp$-equivalence class of the $\kappa$-domain is an invariant of a coarse module.
	% Federico found a mistake.
	\begin{theorem}\label{theorem: domain invariance}
		Let $\mathcal{X}$ be a LFCM space, and let $C^0$ and $C^1$ be coarse $\mathcal{X}$-modules. Suppose there exists an invertible approximable operator $t \colon C^0 \to C^1$. Then, for any infinite cardinal $\kappa$ or $\kappa = 1$, the $\kappa$-domains of $C^0$ and $C^1$ are asymptotically equivalent:
		$$
			\operatorname{dom}_{\kappa}(C^0) \asymp \operatorname{dom}_{\kappa}(C^1).
		$$
	\end{theorem}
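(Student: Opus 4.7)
The plan is to reduce the statement to the controlled-propagation case handled by Lemma \ref{lemma: translations of kappa-domain} via a standard Neumann-series perturbation. Since $t$ is invertible, its inverse is bounded with norm $M = \|t^{-1}\| < \infty$. As $t$ is approximable, for any $\varepsilon \in (0, 1/M)$ one may choose a controlled propagation operator $s \colon C^0 \to C^1$ with $\|t - s\| < \varepsilon$. Writing $s = t(I + t^{-1}(s - t))$ and noting $\|t^{-1}(s - t)\| < 1$, the Neumann series shows that $I + t^{-1}(s - t)$ is invertible, so $s$ itself is invertible as a bounded operator between the underlying Hilbert spaces. Thus $s$ is an invertible operator of controlled propagation to which Lemma \ref{lemma: translations of kappa-domain} applies.

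To establish $\operatorname{dom}_{\kappa}(C^1) \prec \operatorname{dom}_{\kappa}(C^0)$, I would fix the discrete partition $\{A_i\}_{i \in I}$ so that, as observed in the discussion immediately preceding the theorem, $\operatorname{dom}_{\kappa}(C^1)$ equals the disjoint union of those $A_j$ satisfying $\operatorname{rk}(\mathbbm{1}_{A_j}^{C^1}) \geq \kappa$. For each such $A_j$, Lemma \ref{lemma: translations of kappa-domain} applied to $s$ produces an element $A_k$ of the partition with $\operatorname{rk}(\mathbbm{1}_{A_k}^{C^0}) \geq \kappa$ and $\mathbbm{1}_{A_j}^{C^1} s \mathbbm{1}_{A_k}^{C^0} \neq 0$, so $A_j \times A_k \subset \operatorname{Supp}(s)$. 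Setting $E = \operatorname{Supp}(s) \in \mathcal{E}^X$, this gives $A_j \subset E[A_k] \subset E[\operatorname{dom}_{\kappa}(C^0)]$, and taking the union over $j$ yields $\operatorname{dom}_{\kappa}(C^1) \subset E[\operatorname{dom}_{\kappa}(C^0)]$. The reverse subordination follows by the symmetric argument applied to the adjoint $s^* \colon C^1 \to C^0$, which is again invertible and has controlled propagation since $\operatorname{Supp}(s^*) = \operatorname{Supp}(s)^T$ by Lemma \ref{lemma: properties of support}. Combining both subordinations gives $\operatorname{dom}_{\kappa}(C^0) \asymp \operatorname{dom}_{\kappa}(C^1)$.

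The only delicate point is the perturbation step, which hinges on approximating $t$ finely enough that invertibility is preserved; the rest amounts to bookkeeping with supports and the observation that $\kappa$-domains are disjoint unions of partition elements. Note that the assumption that $\kappa$ is either infinite or equal to $1$ enters only through Lemma \ref{lemma: translations of kappa-domain}, where it is needed to extract a single partition element of rank at least $\kappa$ from a finite sum of such operators.
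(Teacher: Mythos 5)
Your proof is correct and rests on the same two pillars as the paper's argument: the perturbation of the invertible approximable $t$ to an invertible controlled-propagation operator $s$ (the paper asserts this step without the explicit Neumann-series estimate you supply), and Lemma \ref{lemma: translations of kappa-domain}. The only real divergence is in the final bookkeeping: the paper routes the conclusion through the auxiliary restricted module $C_\kappa$ and the notion of coarsely full operators, whereas you extract the subordination $\operatorname{dom}_{\kappa}(C^1) \subset \operatorname{Supp}(s)[\operatorname{dom}_{\kappa}(C^0)]$ directly from the containment $A_j \times A_k \subset \operatorname{Supp}(s)$ furnished by the lemma, using that $\operatorname{Supp}(s)$ is an entourage and that $\kappa$-domains are disjoint unions of partition blocks. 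This is a legitimate and arguably cleaner shortcut; the symmetric step via $s^*$, which is invertible with support $\operatorname{Supp}(s)^T$, is likewise sound.
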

	\begin{proof}
		First, we establish that if the $\kappa$-domain of $C^1$ is non-empty, then so is the $\kappa$-domain of $C^0$. For $\kappa = 1$, the condition $\operatorname{dom}_1(C^1) \neq \emptyset$ asserts that the underlying Hilbert space of $C^1$ is non-trivial. Since $t \colon C^0 \to C^1$ is invertible, it follows that the underlying Hilbert space of $C^0$ is also non-trivial, implying that $\operatorname{dom}_1(C^0) \neq \emptyset$. Now, let $\kappa$ be an infinite cardinal, and suppose there exists an $E^X_{\text{disc}}$-bounded subset $A \subset X$ such that $\operatorname{rk}(\mathbbm{1}_A^{C^1}) \geq \kappa$. As $t$ is an invertible approximable operator, we may find an invertible operator $t_0 \colon C^0 \to C^1$ with controlled propagation. Then, by Lemma \ref{lemma: translations of kappa-domain}, there exists an element $A$ in the discrete partition $\{A_i\}_{i \in I}$ of $\mathcal{X}$ such that $\operatorname{rk}(\mathbbm{1}_{A}^{C^0}) \ge \kappa$. Consequently, $\operatorname{dom}_\kappa(C^0) \neq \emptyset$.

		Let $C$ be a coarse $\mathcal{X}$-module with a non-empty $\kappa$-domain. Define a coarse $\mathcal{X}$-module $C_{\kappa}$ as follows: its underlying Hilbert space is $\mathbbm{1}^C_{\operatorname{dom}_{\kappa}(C)} H_C$, and its representation is given by $A \mapsto \mathbbm{1}^C_{A \cap \operatorname{dom}_{\kappa}(C)}$. It is straightforward to verify that $C_{\kappa}$ is a non-zero coarse $\mathcal{X}$-module. Furthermore, we observe that $\operatorname{dom}_{\kappa}(C) = \operatorname{dom}_1(C_{\kappa})$, and the inclusion $i_C \colon C_{\kappa} \to C$ is a controlled propagation isometry.

		Suppose we are given two coarse $\mathcal{X}$-modules $C^0$ and $C^1$ with non-empty $\kappa$-domains, along with an approximable invertible operator $t_0 \colon C^0 \to C^1$. Then, there exists an invertible bounded operator $t \colon C^0 \to C^1$ with controlled propagation. By Lemma \ref{lemma: Invertible operator is fully supported}, $t$ is coarsely full. Consider $\tilde{t} = i_{C^1}^* t i_{C^0} \colon C^0_{\kappa} \to C^1_{\kappa}$. As the inclusions $i_{C^i}$ have controlled propagation, so does $\tilde{t}$. Moreover, we have
		$$
			\operatorname{Supp}(\tilde{t}) = \operatorname{Supp}(t) \cap \operatorname{dom}_{1}(C^1_{\kappa}) \times \operatorname{dom}_1(C^0_{\kappa}).
		$$
		We shall now demonstrate that $\tilde{t}$ is coarsely full. For $\kappa = 1$, the operator $\tilde{t}$ coincides with $t$, which is coarsely full. For an infinite cardinal $\kappa$, let $B$ be an $E^X_{\text{disc}}$-bounded measurable subset of $X$ satisfying $\operatorname{rk}(\mathbbm{1}_B^{C^1}) \geq \kappa$. By Lemma \ref{lemma: translations of kappa-domain}, there exists an element $A$ in the discrete partition $\{A_i\}_{i \in I}$ such that
		$$
			\operatorname{rk}(\mathbbm{1}_A^{C^0}) \ge \kappa, \quad \text{and} \quad \mathbbm{1}_B^{C_1} t \mathbbm{1}_A^{C_0} \neq 0.
		$$
		It follows that $B \times A \subset \operatorname{Supp}(\tilde{t})$, and consequently, $\pi_1(\operatorname{Supp}(\tilde{t}))$ contains $B$. As $B$ was arbitrary, we conclude that $\tilde{t}$ is coarsely full. Therefore,
		$$
			\operatorname{dom}_{\kappa}(C^1) = \operatorname{dom}_1 (C^1_{\kappa}) \prec \operatorname{dom}_1 (C^0_{\kappa}) = \operatorname{dom}_{\kappa}(C^0).
		$$
		By symmetry, we obtain the converse subordination, and hence, the $\kappa$-domains of $C^0$ and $C^1$ are asymptotic.
	\end{proof}
	
	One may remark that the set $\{[\operatorname{dom}_{\kappa}(C)]\}_{\kappa}$ is not a complete invariant for coarse $\mathcal{X}$-modules. For example, let $\mathcal{X}$ be a LFCM space, and let $x, y \in X$ be points that belong to the same connected component of $\mathcal{X}$. Consider the following modules:
	
	\begin{enumerate}
    	\item $C_x$ is a bounded module from Example \ref{example: bounded module} built upon a point $x$;
    	\item $C$ is the direct sum of the bounded modules $C_x$ and $C_y$, built upon points $x, y \in \mathcal{X}$, respectively.
	\end{enumerate}

	The $\kappa$-domains of both $C$ and $C_x$ are empty for $\kappa \ge 2$. For $\kappa = 1$, the $1$-domain of $C_x$ consists of a single point $\{x\}$, while the $1$-domain of $C$ consists of two points $\{x, y\}$. Since $x$ and $y$ belong to the same connected component of $\mathcal{X}$, the $1$-domains of $C$ and $C_x$ are asymptotically equivalent, despite the underlying Hilbert spaces having different dimensions.
	
	\begin{lemma}
    	Let $\mathcal{X}$ be a LFCM space and $C_0, C_1$ be $\mathcal{X}$-modules. Then, for any cardinal $\kappa$, 
    	$$
        	\operatorname{dom}_{\kappa}(C_0) \subset \operatorname{dom}_{\kappa}(C_0 \oplus C_1).
    	$$
    	In particular, if $C_0$ is a faithful (respectively, ample) $\mathcal{X}$-module, then the direct sum $C_0 \oplus C_1$ is also a faithful (respectively, ample) $\mathcal{X}$-module.
	\end{lemma}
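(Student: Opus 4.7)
The plan is to unwind the definitions directly; the key observation is that the projection onto a measurable set $A$ in the direct sum module decomposes as an orthogonal sum of the corresponding projections in each summand, which can only increase the rank.

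More precisely, I would fix an $E^X_{\text{disc}}$-bounded measurable set $A \subset \operatorname{dom}_{\kappa}(C_0)$, so that $\operatorname{rk}(\mathbbm{1}_A^{C_0}) \geq \kappa$. By the definition of the direct sum module given just before Lemma \ref{lemma: Finite coproducts}, the representation satisfies
$$
\mathbbm{1}_A^{C_0 \oplus C_1} = \mathbbm{1}_A^{C_0} \oplus \mathbbm{1}_A^{C_1},
$$
acting on $H_{C_0} \oplus H_{C_1}$. The range of this projection contains the range of $\mathbbm{1}_A^{C_0} \oplus 0$, so
$$
\operatorname{rk}(\mathbbm{1}_A^{C_0 \oplus C_1}) \geq \operatorname{rk}(\mathbbm{1}_A^{C_0}) \geq \kappa,
$$
which shows $A \subset \operatorname{dom}_{\kappa}(C_0 \oplus C_1)$. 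Taking the union over all such $A$ gives the claimed inclusion $\operatorname{dom}_{\kappa}(C_0) \subset \operatorname{dom}_{\kappa}(C_0 \oplus C_1)$.

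For the \textbf{in particular} statement, suppose $C_0$ is faithful, i.e.\ $\operatorname{dom}_1(C_0) \asymp X$. The inclusion just proved with $\kappa = 1$ gives
$$
X \prec \operatorname{dom}_1(C_0) \subset \operatorname{dom}_1(C_0 \oplus C_1) \subset X,
$$
so $\operatorname{dom}_1(C_0 \oplus C_1) \asymp X$, establishing faithfulness of $C_0 \oplus C_1$. The same argument with $\kappa = \aleph_0$ handles the ample case.

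I do not anticipate any real obstacle here; the statement is essentially bookkeeping once one records that direct sums of projections have rank equal to the sum of the ranks. The only minor point worth being explicit about is that the argument is valid for every cardinal $\kappa$ (not only those infinite or equal to $1$) since the inclusion does not depend on the invariance result of the previous theorem; invariance under the choice of discreteness gauge is only invoked implicitly by working with the fixed gauge $E^X_{\text{disc}} = \bigsqcup_{i \in I} A_i \times A_i$ adopted earlier in the section.
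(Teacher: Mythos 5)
Your proof is correct and follows essentially the same route as the paper, which likewise reduces the claim to the single observation that $\operatorname{rk}(\mathbbm{1}^{C_0 \oplus C_1}_A) = \operatorname{rk}(\mathbbm{1}_A^{C_0} \oplus \mathbbm{1}_A^{C_1}) \geq \operatorname{rk}(\mathbbm{1}^{C_0}_A)$ for every measurable $A$. Your additional remarks about the ``in particular'' statement and the validity for arbitrary cardinals are accurate but just make explicit what the paper leaves implicit.
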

	\begin{proof}
    	The result follows from the observation that 
    	$$
        	\operatorname{rk}(\mathbbm{1}^{C_0 \oplus C_1}_A) = \operatorname{rk}(\mathbbm{1}_A^{C_0} \oplus \mathbbm{1}_A^{C_1}) \geq \operatorname{rk}(\mathbbm{1}^{C_0}_A),
    	$$
    	for any measurable subset $A$ of $X$.
	\end{proof}

	Observe that $\mathcal{A}_{\kappa}(\mathcal{X})$ does not necessarily contains a faithful $\mathcal{X}$-module. For instance, if $\mathcal{X}$ has $\aleph_1$ coarsely connected components, no coarse $\mathcal{X}$-module of rank $\aleph_0$ can be faithful. However, it is possible to fully characterise the cardinals for which the approximable category contains a faithful module.
	
	\begin{lemma} \label{lemma: cardinality and existence of faith/ample mods}
		Let $\mathcal{X}$ be a LFCM space with an infinite discrete partition. For any two discrete partitions $\{A_i\}_{i \in I}$ and $\{B_j\}_{j \in J}$, the cardinalities of $I$ and $J$ are equal. Moreover, for an infinite cardinal $\kappa$, the following statements are equivalent:
		\begin{enumerate}
    		\item $\kappa$ is greater than or equal to the cardinality of a discrete partition of $\mathcal{X}$;
    		\item $\mathcal{A}_{\kappa}(\mathcal{X})$ contains a faithful $\mathcal{X}$-module;
    		\item $\mathcal{A}_{\kappa}(\mathcal{X})$ contains an ample $\mathcal{X}$-module.
		\end{enumerate}
	\end{lemma}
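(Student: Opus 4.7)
The plan is to address the two assertions separately. For the invariance of cardinality of discrete partitions, given two discrete partitions $\{A_i\}_{i \in I}$ and $\{B_j\}_{j \in J}$, local finiteness of $\{B_j\}$ combined with boundedness of each $A_i$ implies that $J_i = \{j \in J : A_i \cap B_j \neq \emptyset\}$ is finite for every $i$. Since $\{A_i\}$ covers $X$, one has $J = \bigcup_{i \in I} J_i$, so $|J| \leq \aleph_0 \cdot |I| = |I|$ when $|I|$ is infinite; by symmetry, $|I| = |J|$, and the case $|J|$ finite is excluded, for it would make $X$ bounded and force $|I|$ finite by local finiteness of $\{A_i\}$.

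For $(1) \Rightarrow (3)$, fix a discrete partition $\{A_i\}_{i \in I}$, take the LFCM space $\mathcal{I} = (I, \mathcal{E}^I, \mathcal{P}(I))$ together with the bimeasurable coarse equivalence $\iota \colon \mathcal{I} \to \mathcal{X}$ from the proof of Proposition \ref{proposition: discritization of LFCM spaces}, and define a coarse $\mathcal{X}$-module $C$ with underlying Hilbert space $\ell^2(I) \otimes \ell^2(\mathbb{N})$ and representation $\mathbbm{1}^{C}_A = \mathbbm{1}^{C_u}_{\iota^{-1}(A)} \otimes \operatorname{id}_{\ell^2(\mathbb{N})}$, where $C_u$ denotes the uniform module on $\mathcal{I}$ from Example \ref{example: uniform module}. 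Since $\iota^{-1}(A_j) = \{j\}$ by construction, $\mathbbm{1}^{C}_{A_j}$ has rank $\aleph_0$, so $A_j \subset \operatorname{dom}_{\aleph_0}(C)$ for every $j$, and $C$ is ample. The rank of $H_C$ equals $|I| \cdot \aleph_0 = |I| \leq \kappa$, so $C \in \mathcal{A}_\kappa(\mathcal{X})$.

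The implication $(3) \Rightarrow (2)$ is immediate from $\operatorname{dom}_{\aleph_0}(C) \subset \operatorname{dom}_1(C)$. For $(2) \Rightarrow (1)$, let $C$ be a faithful $\mathcal{X}$-module of rank at most $\kappa$ and set $D = \{i \in I : \mathbbm{1}^{C}_{A_i} \neq 0\}$. The projections $\{\mathbbm{1}^{C}_{A_i}\}_{i \in D}$ are mutually orthogonal and nonzero, so $\operatorname{rk}(H_C) \geq |D|$, whence $\kappa \geq |D|$. Choosing the discreteness gauge $E^X_{\operatorname{disc}} = \bigsqcup_{i \in I} A_i \times A_i$, any $E^X_{\operatorname{disc}}$-bounded set is contained in a single $A_i$, so $\operatorname{dom}_1(C) \subset \bigsqcup_{i \in D} A_i$. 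Faithfulness of $C$ then makes $\bigsqcup_{i \in D} A_i$ coarsely dense in $\mathcal{X}$; transporting along $\mathcal{X} \simeq \mathcal{I}$, $D$ is coarsely dense in $\mathcal{I}$. Since $\mathcal{I}$ is locally finite, each $E^I$-ball is finite, and the infinitude of $I$ forces $|D| = |I|$, so $\kappa \geq |I|$.

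The main obstacle is the careful choice of the discreteness gauge in the implication $(2) \Rightarrow (1)$: by using $E^X_{\operatorname{disc}} = \bigsqcup_i A_i \times A_i$, the $E^X_{\operatorname{disc}}$-bounded sets become precisely the subsets contained in a single partition atom, and this identification is what transforms the faithfulness hypothesis into the coarse density of $D$ inside $\mathcal{I}$. Once this is in place, everything else is a bookkeeping exercise involving the invariance of the $\kappa$-domain class (already established) and the fact that coarsely dense subsets of an infinite locally finite coarse space preserve cardinality.
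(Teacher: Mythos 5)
Your proof is correct and follows essentially the same route as the paper: the cardinality comparison via local finiteness, the uniform module (tensored with $\ell^2(\mathbb{N})$) for existence, and the coarse density of the index set $D$ of atoms with nonzero projection for the converse. One minor imprecision: if $|J|$ were finite, $X$ need not be bounded (it could have several coarsely connected components), but your intended conclusion that $|I|$ is then finite follows directly from local finiteness of $\{A_i\}$ applied to each bounded $B_j$, so the argument stands.
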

	\begin{proof}
		Since $\{A_i\}_{i \in I}$ is a discrete partition of $\mathcal{X}$, for each $j \in J$, there exist only finitely many $i \in I$ such that $B_j \cap A_i \neq \emptyset$. Consequently, $|J| \leq |I| \times \aleph_0$. Similarly, one has $|I| \leq |J| \times \aleph_0$. Note that if $\mathcal{X}$ admits an infinite discrete partition, then any discrete partition of $\mathcal{X}$ must also be infinite. It follows that
		$$
			|I| = |I| \times \aleph_0 = |J| \times \aleph_0 = |J|.
		$$
		For the second part of the statement, note that every ample $\mathcal{X}$-module is necessarily faithful. Conversely, given a faithful $\mathcal{X}$-module $C$, let $H$ be a separable, infinite-dimensional Hilbert space. Define the $\mathcal{X}$-module $C \otimes H$, where the underlying Hilbert space is $H_C \otimes H$ and the representation $\mathbbm{1}_{\bullet}^{C \otimes H}$ is given by $\mathbbm{1}_A^{C \otimes H} = \mathbbm{1}_A^C \otimes \operatorname{id}_H$ for all subsets $A$. It is straightforward to verify that $C \otimes H$ is an ample $\mathcal{X}$-module with the same rank as $C$.
		
		It remains to establish the equivalence of (1) and (2). If $\kappa$ exceeds the cardinality of the discrete partition, then the module constructed in Example \ref{example: uniform module} is a faithful $\mathcal{X}$-module of rank $|I|$. Conversely, suppose $C$ is a faithful $\mathcal{X}$-module of rank $\kappa$. Define $I_C = \{i \in I \mid A_i \cap \operatorname{dom}_1(C) \neq \emptyset\} \subseteq I$. By discretising $\mathcal{X}$ and $\operatorname{dom}_1(C)$, there exists a gauge $E \in \mathcal{E}^I$ such that
		$$
			I = E[I_C] = \bigcup_{i \in I_C} E[\{i\}].
		$$
		It follows that the cardinality of $I$ is given by $|I_C| \times \sup_{i \in I_C} |E[\{i\}]|$. Since $|E[\{i\}]|$ is finite for all $i \in I$, we deduce $|I| = |I_C|$.
	\end{proof}
	
	We shall focus on the case of countably generated LFCM spaces. Accordingly, we aim to determine the cardinals $\kappa$ for which the approximable category $\mathcal{A}_{\kappa}(\mathcal{X})$ contains a faithful $\mathcal{X}$-module. The following lemma addresses this question in the connected case.
	
	\begin{lemma}\label{cardinality of discrete partition for countably generated LFCM spaces}
		If $\mathcal{X}$ is a countably generated LFCM space with a countable number of connected components, then $\mathcal{A}_{\aleph_0}(\mathcal{X})$ contains a faithful $\mathcal{X}$-module.
	\end{lemma}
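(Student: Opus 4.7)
The plan is to reduce the statement to Lemma \ref{lemma: cardinality and existence of faith/ample mods}, which guarantees the existence of a faithful $\mathcal{X}$-module in $\mathcal{A}_{\aleph_0}(\mathcal{X})$ as soon as $\aleph_0$ is at least the cardinality of a discrete partition of $\mathcal{X}$. Consequently, the whole problem reduces to showing that any discrete partition $\{A_i\}_{i \in I}$ of $\mathcal{X}$ is countable under the hypotheses of the lemma.

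The first step is to observe that every $E^X_{\text{disc}}$-bounded set is contained in a single coarsely connected component of $\mathcal{X}$. Indeed, if $B \times B \in \mathcal{E}^X$, then any two points of $B$ are connected by a controlled pair, so $B$ lies in one component. Hence each $A_i$ lies in a single connected component, and since there are only countably many such components by assumption, it suffices to prove that each component meets only countably many of the $A_i$'s.

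Next, I would fix a coarsely connected component $\mathcal{X}_0$ and a point $x_0 \in \mathcal{X}_0$, and exhibit $\mathcal{X}_0$ as a countable union of bounded subsets. Using the countable generation of $\mathcal{E}^X$, choose generators $\{E_k\}_{k \in \mathbb{N}}$ and form the increasing cofinal sequence of gauges
$$
    F_k = \bigl(\Delta_X \cup E_1 \cup E_1^T\bigr) \circ \cdots \circ \bigl(\Delta_X \cup E_k \cup E_k^T\bigr).
$$
Since every controlled set is contained in some $F_k$, one has $\mathcal{X}_0 = \bigcup_{k \in \mathbb{N}} F_k[\{x_0\}]$, and each $F_k[\{x_0\}]$ is bounded because $F_k[\{x_0\}] \times F_k[\{x_0\}] \subset F_k \circ F_k^T \in \mathcal{E}^X$. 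By the local finiteness of the discrete partition, only finitely many $A_i$ meet any given $F_k[\{x_0\}]$, so at most countably many partition elements lie in $\mathcal{X}_0$.

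Combining the two steps yields $|I| \leq \aleph_0 \cdot \aleph_0 = \aleph_0$, and Lemma \ref{lemma: cardinality and existence of faith/ample mods} then produces the desired faithful module in $\mathcal{A}_{\aleph_0}(\mathcal{X})$. No real obstacle appears in this argument; the only non-formal ingredient is the construction of the cofinal sequence of gauges, which is a standard consequence of countable generation.
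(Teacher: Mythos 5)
Your argument is correct and follows essentially the same route as the paper: both reduce to Lemma \ref{lemma: cardinality and existence of faith/ample mods} by showing a discrete partition is countable, covering each of the countably many components by a cofinal increasing sequence of bounded neighbourhoods of a base point and invoking local finiteness (the paper phrases this through the discretisation $\mathcal{I}$, you work directly in $\mathcal{X}$). The only nitpick is your explicit formula for $F_k$: a composition of the $k$ \emph{distinct} factors $(\Delta_X \cup E_j \cup E_j^T)$ need not absorb iterated compositions of a single generator such as $E_1 \circ E_1 \circ E_1$, so you should instead take the $k$-fold composition of $\Delta_X \cup \bigcup_{j \le k}(E_j \cup E_j^T)$ (or simply cite that countably generated coarse structures are metrizable, as the paper does); this does not affect the argument, which only uses the existence of such a cofinal sequence of gauges.
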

	\begin{proof}
		Let $\{A_i\}_{i \in I}$ be a discrete partition of $\mathcal{X}$. By considering the discretisation $\mathcal{I} = (I, \mathcal{E}^I, \mathcal{P}(I))$ of $\mathcal{X}$, as in Proposition \ref{proposition: discritization of LFCM spaces}, we obtain a locally finite, countably generated coarse space. We may select a generating set $\{E_n\}_{n \in \mathbb{N}}$ for $\mathcal{I}$ satisfying the following conditions:
		\begin{enumerate}
			\item For every $n \in \mathbb{N}$, the inclusion $E_n \subset E_{n+1}$ holds;
			\item For every $n \in \mathbb{N}$, the entourage $E_n$ is a gauge;
			\item For every entourage $F \in \mathcal{E}^I$, there exists $n \in \mathbb{N}$ such that $F \subset E_n$.
		\end{enumerate}
		Since $\mathcal{I}$ has a countable amount of connected components, there exists a sequence $\{i_k\}_{k \in \mathbb{N}} \subset I$ such that for every $j \in J$ there are $n,k \in \mathbb{N}$ such that $(i_k, j) \in E_n$. As $\mathcal{I}$ is locally finite, the sets $E_n[i_k]$ are finite for all $n,k \in \mathbb{N}$. Hence,  
		$$
			I = \bigcup_{k \in \mathbb{N}} \bigcup_{n \in \mathbb{N}} E_n[i_k]
		$$
		is at most countable. Consequently, $\mathcal{X}$ admits a countable discrete partition. By Lemma \ref{lemma: cardinality and existence of faith/ample mods}, the approximable category $\mathcal{A}_{\aleph_0}(\mathcal{X})$ contains a faithful $\mathcal{X}$-module.
	\end{proof}
	
	In the case where $\mathcal{X}$ has an uncountable number of connected components, the theorem above does not hold. For instance, consider an uncountable disjoint union of copies of $\mathbb{N}$, equipped with an extended metric
	$$
		d(n,m) = \begin{cases}
			|n-m|, & \text{if } n,m \text{ belong to the same copy of } \mathbb{N}; \\
			\infty, & \text{otherwise.}
		\end{cases}
	$$
	In this setting, the cardinal $\kappa$ must be at least the cardinality of the set of connected components of the space for the approximable category of rank $\kappa$ modules to contain a faithful module.
	
	Finally, the notion of a domain introduced in this section enables us to remove the faithfulness assumption in Theorem \ref{theorem: rigidity of $*$-isomorphisms} by restricting the domain and codomain of the resulting coarse equivalence to the faithfulness domains of the respective coarse modules.
	
	\begin{corollary} \label{corollary: Nonfaithful rigidity}
		Let $\mathcal{X}$, $\mathcal{Y}$ be countably generated LFCM spaces, and let $C_X$, $C_Y$ be coarse modules. Suppose $U \colon H_{C_X} \to H_{C_Y}$ is a unitary such that $\operatorname{Ad}_{U}$ restricts to a $*$-isomorphism between the algebras of approximable operators on $C_X$ and $C_Y$, respectively. Then, for any $\delta \in (0,1)$, there exist sufficiently large sets $F \in \mathcal{E}^Y$ and $E \in \mathcal{E}^X$ such that
		$$
    		f^U_{\delta, F, E} \colon \operatorname{dom}_1(C_X) \to \operatorname{dom}_1(C_Y)
		$$
		is a coarse equivalence, with inverse $f^{U^*}_{\delta, E, F}$.
	\end{corollary}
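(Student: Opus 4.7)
The plan is to reduce the statement to the faithful case, Theorem \ref{theorem: rigidity of $*$-isomorphisms}, by restricting the whole setup to the faithfulness domains. The key observation is that, by the non-degeneracy condition and the definition of $\operatorname{dom}_1(C_X)$, the Hilbert space $H_{C_X}$ is spanned by $\mathbbm{1}_A^{C_X} H_{C_X}$ for $A$ ranging over $E^X_{\text{disc}}$-bounded measurable subsets contained in $\operatorname{dom}_1(C_X)$. Equivalently, $\mathbbm{1}^{C_X}_{A} = \mathbbm{1}^{C_X}_{A \cap \operatorname{dom}_1(C_X)}$ for every measurable $A \subset X$. The same holds for $C_Y$ and $\operatorname{dom}_1(C_Y)$.

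First, I endow $\mathcal{X}_1 := \operatorname{dom}_1(C_X)$ with the coarse structure $\mathcal{E}^{X_1} := \{E \cap (\operatorname{dom}_1(C_X))^2 \mid E \in \mathcal{E}^X\}$ and the restricted Boolean algebra of subsets, and similarly define $\mathcal{Y}_1$. Choosing the discreteness gauge $E^X_{\text{disc}} = \bigsqcup_{i \in I} A_i \times A_i$ as was done after Definition \ref{definition: discrete space}, the faithfulness domain is a union of partition elements $\{A_i\}_{i \in I_1}$ with $I_1 = \{i \in I \mid \mathbbm{1}_{A_i}^{C_X} \neq 0\}$, which makes $\mathcal{X}_1$ into a countably generated LFCM space (countable generation being inherited from $\mathcal{X}$). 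Regarded as a coarse $\mathcal{X}_1$-module, $C_X$ is manifestly faithful, since each $A_i$ with $i \in I_1$ is an $E^{X_1}_{\text{disc}}$-bounded measurable subset of $\mathcal{X}_1$ on which $\mathbbm{1}_{A_i}^{C_X} \neq 0$. Moreover, because every approximable operator $t$ on $C_X$ has $\operatorname{Supp}(t) \subset \operatorname{dom}_1(C_X) \times \operatorname{dom}_1(C_X)$, having $E$-controlled propagation for $E \in \mathcal{E}^X$ is equivalent to having $E \cap \mathcal{X}_1 \times \mathcal{X}_1$-controlled propagation for an entourage of $\mathcal{X}_1$; hence $\operatorname{End}_{\mathcal{A}(\mathcal{X}_1)}(C_X) = \operatorname{End}_{\mathcal{A}(\mathcal{X})}(C_X)$, and analogously for $C_Y$.

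Next, I apply Theorem \ref{theorem: rigidity of $*$-isomorphisms} to the faithful countably generated setup $(\mathcal{X}_1, C_X)$, $(\mathcal{Y}_1, C_Y)$ with the same unitary $U$ (whose domain and codomain coincide with $H_{C_X}$ and $H_{C_Y}$, since these are supported on the faithfulness domains). This yields, for any $\delta \in (0,1)$, entourages $F_1 \in \mathcal{E}^{Y_1}$ and $E_1 \in \mathcal{E}^{X_1}$ such that $f^U_{\delta, F_1, E_1}$ is a coarse equivalence $\mathcal{X}_1 \to \mathcal{Y}_1$ with inverse $f^{U^*}_{\delta, E_1, F_1}$. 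Finally, I lift $F_1$ and $E_1$ to entourages $F \in \mathcal{E}^Y$ and $E \in \mathcal{E}^X$ (e.g.\ by $F = F_1 \cup \Delta_Y$, noting $F_1 \subset Y_1 \times Y_1 \subset Y \times Y$). Using $\mathbbm{1}_A^{C_X} = \mathbbm{1}_{A \cap X_1}^{C_X}$ and the analogous identity on $\mathcal{Y}$, the approximate relation $f^U_{\delta, F, E}$ is contained in $Y_1 \times X_1$; and one sees directly from the definitions that its intersection with $Y_1 \times X_1$ (which equals itself) coincides with $f^U_{\delta, F_1, E_1}$, and symmetrically for the inverse. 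This is precisely the statement of the corollary.

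The only mildly subtle point, which I expect to be the main technical check, is the verification that the algebras of approximable operators are unchanged when we pass from $\mathcal{X}$ to its subspace $\mathcal{X}_1$, together with the corresponding correspondence between approximate relations for the two coarse structures; everything else is bookkeeping. Note that this correspondence uses in an essential way the identity $\mathbbm{1}_A^{C_X} = \mathbbm{1}_{A \cap \operatorname{dom}_1(C_X)}^{C_X}$, which ultimately reduces to the local finiteness of the discrete partition and non-degeneracy of the representation.
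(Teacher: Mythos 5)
Your proposal is correct and follows essentially the same route as the paper's proof: restrict the coarse and measurable structures to the faithfulness domains, note that $C_X$ and $C_Y$ become faithful modules over these subspaces with unchanged algebras of approximable operators (since all supports already lie in the $1$-domains), and then invoke Theorem \ref{theorem: rigidity of $*$-isomorphisms}. Your additional bookkeeping on the identity $\mathbbm{1}_A^{C_X} = \mathbbm{1}_{A \cap \operatorname{dom}_1(C_X)}^{C_X}$ and the correspondence of approximate relations just makes explicit what the paper leaves implicit.
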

	\begin{proof}
		Denote the $1$-domain of $C_X$ by $\mathcal{X}_0$, and the $1$-domain of $C_Y$ by $\mathcal{Y}_0$. It follows that $\mathcal{X}_0$ and $\mathcal{Y}_0$ are LFCM spaces with coarse and measurable structures induced from $\mathcal{X}$ and $\mathcal{Y}$, respectively. The modules $C_X$ and $C_Y$ are naturally $\mathcal{X}_0$- and $\mathcal{Y}_0$-modules, respectively. Observe that
		$$
    		\operatorname{End}_{\mathcal{A}(\mathcal{X})}(C_X) = \operatorname{End}_{\mathcal{A}(\mathcal{X}_0)}(C_X) \quad \text{ and } \quad \operatorname{End}_{\mathcal{A}(\mathcal{Y})}(C_Y) = \operatorname{End}_{\mathcal{A}(\mathcal{Y}_0)}(C_Y),
		$$
		since the support of a bounded operator for the original space and the $1$-domain coincide. Now, $U$ is a unitary that restricts to a $*$-isomorphism between algebras of approximable operators on faithful coarse modules, and by Theorem \ref{theorem: rigidity of $*$-isomorphisms}, it induces a coarse equivalence $f^U_{\delta, F, E} \colon \operatorname{dom}_1(C_X) \to \operatorname{dom}_1(C_Y)$.
	\end{proof}
	
	\section{Rigidity of fully faithful $*$-functors}
	
	Suppose $\mathcal{X}$ and $\mathcal{Y}$ are countably generated LFCM spaces, and let $F \colon \mathcal{A}(\mathcal{X}) \to \mathcal{A}(\mathcal{Y})$ be a fully faithful $*$-functor. Suppose that $\mathcal{A}(\mathcal{X})$ contains a faithful $\mathcal{X}$-module. For any faithful $\mathcal{X}$-module $C$, the functor $F$ induces a $*$-isomorphism 
	$$
		\phi_C \colon \operatorname{End}_{\mathcal{A}(\mathcal{X})}(C) \to \operatorname{End}_{\mathcal{A}(\mathcal{Y})}(F(C)).
	$$
	By Theorem~\ref{theorem: rigidity of $*$-isomorphisms}, there exists a unitary $U(C) \colon H_C \to H_{F(C)}$ that implements $\phi_C$. Furthermore, by Corollary~\ref{corollary: Nonfaithful rigidity}, the unitary $U(C)$ induces a coarse embedding
	
	\begin{equation} \label{equation: approximate relations of unitary implementing isos}
    	f^{U(C)}_{\delta, F, E} \colon \mathcal{X} \to \operatorname{dom}_1(F(C)),
	\end{equation}
	
	for any $\delta \in (0,1)$ and sufficiently large gauges $F \in \mathcal{E}^\mathcal{Y}$ and $E \in \mathcal{E}^\mathcal{X}$. This section examines the interplay between the approximate relations introduced in \eqref{equation: approximate relations of unitary implementing isos} under different parameter choices.
	
	\begin{lemma} \label{lemma: properties of approximate relations}
		Let $\mathcal{X}$ and $\mathcal{Y}$ be LFCM spaces, $C_X$ a faithful $\mathcal{X}$-module, and $C_Y$ a $\mathcal{Y}$-module. Suppose $T \colon H_{C_X} \to H_{C_Y}$ is a bounded operator, $u \in \operatorname{End}_{\mathcal{A}(\mathcal{X})}(C_X)$ and $v \in \operatorname{End}_{\mathcal{A}(\mathcal{Y})}(C_Y)$ are central unitaries. Then $f^{vTu}_{\delta, F, E} = f^T_{\delta, F, E}$, for any $\delta > 0$, and any entourages $F \in \mathcal{E}^Y$ and $E \in \mathcal{E}^X$.
	\end{lemma}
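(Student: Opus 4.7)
The plan is to reduce the statement to the observation that bounded subsets of an LFCM space are confined to a single coarsely connected component, together with the explicit form of central unitaries recalled in the remark following Theorem \ref{theorem: spatial implementation}.

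First I would unpack the defining union for $f^{vTu}_{\delta,F,E}$. For any $F$-bounded measurable $B \subset Y$ and $E$-bounded measurable $A \subset X$, the matrix coefficient that must be compared is
$$
\mathbbm{1}_B^{C_Y}(vTu)\mathbbm{1}_A^{C_X} = (\mathbbm{1}_B^{C_Y} v)\, T\, (u \mathbbm{1}_A^{C_X}),
$$
so it suffices to show that $u$ and $v$ act as scalars of modulus one on $\mathbbm{1}_A^{C_X}$ and $\mathbbm{1}_B^{C_Y}$ respectively. The key geometric observation is that if $A \times A \subset E$ for an entourage $E$, then all points of $A$ lie in one coarsely connected component $X_k$ of $\mathcal{X}$, and hence $\mathbbm{1}_A^{C_X} = \mathbbm{1}_{X_k}^{C_X}\mathbbm{1}_A^{C_X}$; analogously for $B$ inside some $Y_\ell$.

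Next I would invoke the description of central unitaries from the remark: every central unitary $u \in \operatorname{End}_{\mathcal{A}(\mathcal{X})}(C_X)$ is of the form $u = \text{SOT-}\sum_{k} \alpha_k \mathbbm{1}_{X_k}^{C_X}$ with $|\alpha_k|=1$, and similarly $v = \text{SOT-}\sum_{\ell} \beta_\ell \mathbbm{1}_{Y_\ell}^{C_Y}$. Combining with the previous step yields
$$
u\mathbbm{1}_A^{C_X} = \alpha_k \mathbbm{1}_A^{C_X}, \qquad \mathbbm{1}_B^{C_Y} v = \beta_\ell \mathbbm{1}_B^{C_Y}.
$$
Consequently
$$
\bigl\|\mathbbm{1}_B^{C_Y}(vTu)\mathbbm{1}_A^{C_X}\bigr\| = |\alpha_k|\,|\beta_\ell|\, \bigl\|\mathbbm{1}_B^{C_Y} T \mathbbm{1}_A^{C_X}\bigr\| = \bigl\|\mathbbm{1}_B^{C_Y} T \mathbbm{1}_A^{C_X}\bigr\|,
$$
so the strict inequality $> \delta$ is satisfied for $(B, A)$ on one side if and only if it is on the other. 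Taking unions of all such rectangles yields the desired equality $f^{vTu}_{\delta, F, E} = f^T_{\delta, F, E}$.

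There is essentially no obstacle here; the only point requiring a brief justification is the reduction of an $E$-bounded set to a single coarsely connected component, which is immediate from $A \times A \subset E \in \mathcal{E}^X$ and the definition of coarse components as equivalence classes under the relation generated by the entourages. Everything else is a direct computation using the spectral decomposition of the central unitaries supplied by the remark, and the fact that the faithfulness of $C_X$ plays no role in this particular identity beyond ensuring that the ambient algebras are those singled out in the statement.
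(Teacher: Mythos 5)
Your proof is correct, but it takes a slightly different route from the paper's. The paper's argument is a one-line computation: since $u$ and $v$ are central in the algebras of approximable operators, which contain the multiplication projections $\mathbbm{1}_A^{C_X}$ and $\mathbbm{1}_B^{C_Y}$ (these have controlled propagation), one may commute them past the projections and then use unitary invariance of the operator norm to get $\|\mathbbm{1}_B^{C_Y}\, vTu\, \mathbbm{1}_A^{C_X}\| = \|v\,\mathbbm{1}_B^{C_Y} T \mathbbm{1}_A^{C_X}\,u\| = \|\mathbbm{1}_B^{C_Y} T \mathbbm{1}_A^{C_X}\|$; this identity holds for arbitrary measurable $A$ and $B$. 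You instead invoke the explicit description of central unitaries as $\operatorname{SOT}$-sums $\sum_k \alpha_k \mathbbm{1}_{X_k}$ over coarsely connected components, together with the observation that a nonempty $E$-bounded set lies in a single component, to conclude that $u$ and $v$ act as unimodular scalars on the relevant corners. This works and the geometric observation is sound, but it is a heavier tool than necessary: the classification of central unitaries rests on the nontrivial fact (from the cited remark, via \cite[Corollary 6.18]{MVmodules}) that the approximable algebra contains the compacts on each component, and your argument genuinely needs $A$ and $B$ to be bounded, whereas the paper's does not. The paper's route buys generality and brevity; yours makes the scalar action completely explicit. Your closing observation that faithfulness of $C_X$ is not used is accurate.
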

	\begin{proof}
		Observe that for any measurable subsets $A \in \mathfrak{A}^X$ and $B \in \mathfrak{A}^Y$, the following equality holds:
		$$
			\|\mathbbm{1}^{C_Y}_B vTu \mathbbm{1}^{C_X}_A\| = \|v \mathbbm{1}^{C_Y}_B T \mathbbm{1}^{C_X}_A u\| = \|\mathbbm{1}^{C_Y}_B T \mathbbm{1}^{C_X}_A\|.
		$$
		This equality arises from the fact that the central unitaries $u$ and $v$ commute with the projections $\mathbbm{1}_B^{C_Y}$ and $\mathbbm{1}_A^{C_X}$. Consequently, for any $\delta > 0$, and any entourages $F \in \mathcal{E}^Y$ and $E \in \mathcal{E}^X$, the approximate relations are identical.
	\end{proof}
	
	Lemma \ref{lemma: properties of approximate relations} ensures that the relations in \eqref{equation: approximate relations of unitary implementing isos} are independent of the choice of the unitary $U(C)$. Consider two sets of parameters, $(\delta_1, F_1, E_1)$ and $(\delta_2, F_2, E_2)$, for which the respective approximate relations are coarse embeddings. Define $\delta = \min(\delta_1, \delta_2)$, $F = F_1 \cup F_2$, and $E = E_1 \cup E_2$. By Lemma \ref{lemma: properties of approximate relations}, one has that
	$$
		f^{U(C)}_{\delta_1, F_1, E_1} \cup f^{U(C)}_{\delta_2, F_2, E_2} \subset f^{U(C)}_{\delta, F, E}.
	$$
	Enlarging $E$ and $F$ if necessary, one can assume that the relation on the right-hand side is also a coarse embedding. Applying Lemma \ref{lemma: included relations => asymptotic}, this inclusion yields the asymptotic equivalence
	$$
		f^{U(C)}_{\delta_1, F_1, E_1} \asymp f^{U(C)}_{\delta, F, E} \asymp f^{U(C)}_{\delta_2, F_2, E_2}.
	$$
	Hence, the relation (\ref{equation: approximate relations of unitary implementing isos}) is independent of the choices of $U(C)$, $\delta$, $F$, and $E$. We still have the freedom to choose a different faithful $\mathcal{X}$-module $C$.
	
	\begin{theorem} \label{*-functor => coarse emb}
		Let $\mathcal{X}$ and $\mathcal{Y}$ be countably generated LFCM spaces, and let $F \colon \mathcal{A}(\mathcal{X}) \to \mathcal{A}(\mathcal{Y})$ be a fully faithful $*$-functor. Suppose that $\mathcal{A}(\mathcal{X})$ and $\mathcal{A}(\mathcal{Y})$ each contain a faithful $\mathcal{X}$-module and a faithful $\mathcal{Y}$-module, respectively. Then there exists a coarse embedding $f^F \colon \mathcal{X} \to \mathcal{Y}$, unique up to closeness, such that for any faithful $\mathcal{X}$-module $C$, $\delta > 0$, and sufficiently large entourages $F \in \mathcal{E}^Y$ and $E \in \mathcal{E}^X$, the approximate relation $f^{U(C)}_{\delta, F, E}$ is close to $f^F$. Furthermore, if $F$ is a $*$-equivalence of categories, then $f^F$ is a coarse equivalence. 
	\end{theorem}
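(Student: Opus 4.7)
The plan is to fix a faithful $\mathcal{X}$-module, define $f^F$ via its approximate relation, and then show that changing the module amounts to passing through a common direct sum. First I would fix a faithful $\mathcal{X}$-module $C_0$, invoke Theorem \ref{theorem: spatial implementation} to obtain a unitary $U(C_0) \colon H_{C_0} \to H_{F(C_0)}$ implementing $\phi_{C_0}$, and apply Corollary \ref{corollary: Nonfaithful rigidity} to get a coarse equivalence $f^{U(C_0)}_{\delta, F, E} \colon \mathcal{X} \to \operatorname{dom}_1(F(C_0))$ for $\delta \in (0,1)$ and sufficiently generous entourages. Viewed as an embedding into $\mathcal{Y}$, its closeness class is canonical by Lemma \ref{lemma: properties of approximate relations} and Lemma \ref{lemma: included relations => asymptotic} (as discussed before the statement), and I declare this to be $f^F$. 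Uniqueness up to closeness is then automatic: any map enjoying the stated property must be close to $f^F$ when evaluated at $C_0$.

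The main step is to show that for any other faithful $\mathcal{X}$-module $C_1$, the relation $f^{U(C_1)}$ is close to $f^F$. Consider $C = C_0 \oplus C_1$, which is faithful, and the canonical controlled-propagation isometric inclusions $i_k \colon C_k \to C$, which satisfy $\operatorname{Supp}(i_k) \subseteq \Delta_X$. Applying $F$ yields approximable isometries $F(i_k) \colon F(C_k) \to F(C)$ in $\mathcal{A}(\mathcal{Y})$, with orthogonal ranges summing to $\operatorname{id}_{F(C)}$. For every approximable $t \in \operatorname{End}_{\mathcal{A}(\mathcal{X})}(C_k)$, combining $F(i_k t i_k^*) = F(i_k) F(t) F(i_k)^*$ with $\phi_{C_k} = \operatorname{Ad}_{U(C_k)}$ and $\phi_C = \operatorname{Ad}_{U(C)}$ gives
\begin{equation*}
(F(i_k) U(C_k))\, t\, (F(i_k) U(C_k))^* = (U(C) i_k)\, t\, (U(C) i_k)^*.
\end{equation*}
Since $\operatorname{End}_{\mathcal{A}(\mathcal{X})}(C_k)$ contains the compact operators on each coarsely connected slice of $H_{C_k}$ (the remark following Theorem \ref{theorem: spatial implementation}), a rank-one argument, applied componentwise, produces a central unitary $V_k \in \operatorname{End}_{\mathcal{A}(\mathcal{X})}(C_k)$ with $U(C) i_k = F(i_k) U(C_k) V_k$. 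By Lemma \ref{lemma: properties of approximate relations}, this yields $f^{U(C) i_k}_{\delta, F, E} = f^{F(i_k) U(C_k)}_{\delta, F, E}$.

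The main obstacle is converting this operator identity into an asymptotic equivalence of relations, since $F(i_k)$ is only approximable rather than of controlled propagation. From the isometry $i_k \mathbbm{1}_A^{C_k} = \mathbbm{1}_A^C i_k$ one gets $\|\mathbbm{1}_B^{F(C)} U(C) i_k \mathbbm{1}_A^{C_k}\| \leq \|\mathbbm{1}_B^{F(C)} U(C) \mathbbm{1}_A^C\|$, so $f^{U(C) i_k}_{\delta, F, E} \subseteq f^{U(C)}_{\delta, F, E}$. To compare $f^{F(i_k) U(C_k)}$ with $f^{U(C_k)}$, I would fix $\varepsilon > 0$ and pick a controlled-propagation $F(i_k)_\varepsilon$ with $\|F(i_k) - F(i_k)_\varepsilon\| < \varepsilon$ and propagation within some $E_\varepsilon \in \mathcal{E}^Y$. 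Inserting $\sum_j \mathbbm{1}_{B_j}^{F(C_k)}$ in the SOT between $F(i_k)_\varepsilon$ and $U(C_k)$ leaves only the finitely many terms with $B_j \subseteq E_\varepsilon^T[B]$, giving
\begin{equation*}
\bigl\|\mathbbm{1}_B^{F(C)} F(i_k)_\varepsilon U(C_k) \mathbbm{1}_A^{C_k}\bigr\| \leq (1+\varepsilon)\,\bigl\|\mathbbm{1}_{E_\varepsilon^T[B]}^{F(C_k)} U(C_k) \mathbbm{1}_A^{C_k}\bigr\|,
\end{equation*}
with a symmetric estimate in the other direction. Combining these containments, valid after enlarging $F$ by a controlled amount and shrinking $\delta$ slightly, Lemma \ref{lemma: included relations => asymptotic} upgrades them to $f^{U(C_k)} \asymp f^{U(C) i_k} \subseteq f^{U(C)}$, both having full $\pi_X$-image equal to $\mathcal{X}$; a final application of Lemma \ref{lemma: included relations => asymptotic} identifies $f^{U(C_0)} \asymp f^{U(C)} \asymp f^{U(C_1)}$ as relations $\mathcal{X} \to \mathcal{Y}$.

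Finally, if $F$ is a $*$-equivalence with quasi-inverse $G$ and $D$ is the hypothesised faithful $\mathcal{Y}$-module, then $F(G(D)) \cong D$ in $\mathcal{A}(\mathcal{Y})$, so Theorem \ref{theorem: domain invariance} gives $\operatorname{dom}_1(F(G(D))) \asymp \operatorname{dom}_1(D) \asymp \mathcal{Y}$. Taking the faithful $\mathcal{X}$-module $C = C_0 \oplus G(D)$ and invoking the preceding lemma on domains of direct sums, $\operatorname{dom}_1(F(C)) \supseteq \operatorname{dom}_1(F(G(D))) \asymp \mathcal{Y}$. Computing $f^F$ via this $C$ produces a coarse equivalence $\mathcal{X} \to \operatorname{dom}_1(F(C)) \asymp \mathcal{Y}$, showing $f^F$ is a coarse equivalence.
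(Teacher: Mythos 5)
Your proposal is correct and follows the paper's overall strategy --- parameter-independence via Lemma \ref{lemma: properties of approximate relations} and Lemma \ref{lemma: included relations => asymptotic}, a direct sum $C_0 \oplus C_1$ as the intermediary between two faithful modules, and essential surjectivity for the equivalence case --- but you execute the key comparison differently. The paper works with the \emph{external} unitary $U(C)\oplus U(D) \colon H_{C\oplus D} \to H_{F(C)\oplus F(D)}$, asserts that $\operatorname{Ad}_{U(C)\oplus U(D)}$ is a $*$-isomorphism of the approximable algebras (so that Theorem \ref{theorem: rigidity of $*$-isomorphisms} applies to it), and then uses the one-line norm inequality $\|\mathbbm{1}_B(U(C)\oplus U(D))\mathbbm{1}_A\| \ge \|\mathbbm{1}_B U(C)\mathbbm{1}_A\|$ to get containment of approximate relations. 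You instead work with the functor's own unitary $U(C_0\oplus C_1)$ and derive the intertwining identity $U(C)i_k = F(i_k)U(C_k)V_k$ up to a central unitary $V_k$; this is a genuinely useful addition, since it makes explicit the point the paper leaves implicit (namely why the direct-sum unitary is compatible with $F$ --- in the paper this ultimately rests on Lemma \ref{lemma: additivity nat.iso => obstruction to unitaries}, which only appears in Section 4). Your verification that $W_2^*W_1$ is a central unitary is sound, since $W_1W_1^* = F(i_ki_k^*) = W_2W_2^*$ and $W_2^*W_1$ commutes with an algebra containing the compacts on each connected slice. The price you pay is the comparison of $f^{F(i_k)U(C_k)}$ with $f^{U(C_k)}$, where $F(i_k)$ is only approximable: your forward estimate is fine, but the ``symmetric estimate in the other direction'' is asserted rather than proved, and it does require an argument --- one must rule out cancellation between $\mathbbm{1}_{E_\varepsilon[B]}F(i_k)\mathbbm{1}_B\eta$ and $\mathbbm{1}_{E_\varepsilon[B]}F(i_k)\mathbbm{1}_{Y\setminus B}\eta$, which follows because $F(i_k)$ is an isometry (so the two images are orthogonal) and the cross term picked up by cutting with $\mathbbm{1}_{E_\varepsilon[B]}$ is of order $\varepsilon$. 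With that detail supplied, your chain $f^{U(C_0)} \asymp f^{U(C)i_0} \asymp f^{U(C)} \asymp f^{U(C)i_1} \asymp f^{U(C_1)}$ and your treatment of the equivalence case (via a quasi-inverse and Theorem \ref{theorem: domain invariance}, equivalent to the paper's essential-surjectivity argument) are complete.
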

	% Should A(Y) contain a faithful module?
	\begin{proof}
		By Lemma \ref{lemma: properties of approximate relations} and Remark \ref{remark:: folded approx relations when folded param}, for every faithful coarse $\mathcal{X}$-module $C$, the approximate relations are independent of the choice of $\delta > 0$ and sufficiently large entourages $F \in \mathcal{E}^Y$ and $E \in \mathcal{E}^X$. Suppose two faithful $\mathcal{X}$-modules, $C$ and $D$, are given. The unitary $U(C) \oplus U(D) \colon C \oplus D \to F(C) \oplus F(D)$ induces an isomorphism $\operatorname{Ad}_{U(C) \oplus U(D)}$ between the algebras of approximable operators on $C \oplus D$ and $F(C) \oplus F(D)$, respectively. It follows that $U(C) \oplus U(D)$ is coarse-like, and its approximate relations are controlled. Observe that for any measurable bounded sets $B \in \mathfrak{A}^Y$ and $A \in \mathfrak{A}^X$, one has
		$$
			\|\mathbbm{1}^{F(C) \oplus F(D)}_B U(C) \oplus U(D) \mathbbm{1}_{A}^{C \oplus D}\| \ge \|\mathbbm{1}_B^{F(C)} U(C) \mathbbm{1}_A^C\|.
		$$
		It follows that the approximate relations corresponding to $U(C)$ and $U(C) \oplus U(D)$ are contained in each other. By Lemma \ref{lemma: included relations => asymptotic}, these relations are asymptotic. Similarly, it can be demonstrated that the approximate relations of $U(D)$ and $U(C) \oplus U(D)$ are asymptotic. Therefore, one obtains the equivalences
		$$
			f^{U(C)}_{\delta, F, E} \asymp f^{U(C) \oplus U(D)}_{\delta, F, E} \asymp f^{U(D)}_{\delta, F, E}.
		$$
		Hence, the class of the approximate relation is independent of the choice of a faithful coarse $\mathcal{X}$-module. If $F$ is a $*$-equivalence of categories, it is essentially surjective. In particular, there exists a coarse $\mathcal{X}$-module $D$ such that $F(D)$ is a faithful $\mathcal{Y}$-module. For any faithful $\mathcal{X}$-module $C$, the module $C \oplus D$ is also faithful. Moreover, its image $F(C \oplus D)$ is isomorphic to $F(C) \oplus F(D)$, which remains faithful since $F(D)$ is faithful. By Theorem \ref{theorem: rigidity of $*$-isomorphisms}, it follows that the approximate relations corresponding to $U(C) \oplus U(D)$ are coarse equivalences.
	\end{proof}
	
	Let $\mathcal{X}$ and $\mathcal{Y}$ be countably generated LFCM spaces. Suppose a family $\{C_i\}_{i \in I}$ of faithful $\mathcal{X}$-modules and a family $\{D_i\}_{i \in I}$ of $\mathcal{Y}$-modules are given. In particular, the preceding theorem establishes that if a family of $*$-isomorphisms  
	$$
		\phi_i \colon \operatorname{End}_{\mathcal{A}(\mathcal{X})}(C_i) \to \operatorname{End}_{\mathcal{A}(\mathcal{Y})}(D_i)
	$$  
	assembles into a full and faithful $*$-functor $F \colon \mathcal{A}(\mathcal{X}) \to \mathcal{A}(\mathcal{Y})$, then these $*$-isomorphisms induce the same coarse embedding, unique up to closeness, between $\mathcal{X}$ and $\mathcal{Y}$.
	
	\begin{corollary}
		Let $\mathcal{X}$ and $\mathcal{Y}$ be countably generated LFCM spaces, and let $F \colon \mathcal{A}(\mathcal{X}) \to \mathcal{A}(\mathcal{Y})$ be a $*$-equivalence of categories. Suppose that $\mathcal{A}(\mathcal{X})$ and $\mathcal{A}(\mathcal{Y})$ each contain a faithful $\mathcal{X}$-module and a faithful $\mathcal{Y}$-module, respectively. Then, for any faithful $\mathcal{X}$-module $C$, the $\mathcal{Y}$-module $F(C)$ is also faithful.
	\end{corollary}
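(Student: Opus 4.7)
The plan is to combine Theorem \ref{*-functor => coarse emb} with Corollary \ref{corollary: Nonfaithful rigidity} so as to extract two asymptotic descriptions of the image of the same approximate relation, and then observe that they must agree. Let $C$ be a faithful $\mathcal{X}$-module. First, I would note that fullness and faithfulness of $F$ produce a $*$-isomorphism
$$
    \phi_C \colon \operatorname{End}_{\mathcal{A}(\mathcal{X})}(C) \to \operatorname{End}_{\mathcal{A}(\mathcal{Y})}(F(C)),
$$
which by Theorem \ref{theorem: spatial implementation} is implemented by a unitary $U(C) \colon H_C \to H_{F(C)}$.

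Applying Corollary \ref{corollary: Nonfaithful rigidity} to $U(C)$, I obtain that for any $\delta \in (0,1)$ and sufficiently large entourages $F_0 \in \mathcal{E}^Y$, $E_0 \in \mathcal{E}^X$, the approximate relation $f^{U(C)}_{\delta, F_0, E_0}$ is a coarse equivalence from $\operatorname{dom}_1(C) \asymp \mathcal{X}$ (where the asymptotic equivalence uses faithfulness of $C$) onto $\operatorname{dom}_1(F(C))$; in particular its $\pi_Y$-projection satisfies $\pi_Y(f^{U(C)}_{\delta, F_0, E_0}) \asymp \operatorname{dom}_1(F(C))$. On the other hand, since $F$ is a $*$-equivalence, Theorem \ref{*-functor => coarse emb} supplies a coarse equivalence $f^F \colon \mathcal{X} \to \mathcal{Y}$, unique up to closeness, with the property that $f^{U(C)}_{\delta, F_0, E_0} \asymp f^F$ as controlled relations in $Y \times X$.

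To conclude, I would chain the resulting asymptotic equivalences:
$$
    \operatorname{dom}_1(F(C)) \asymp \pi_Y\bigl(f^{U(C)}_{\delta, F_0, E_0}\bigr) \asymp \pi_Y(f^F) \asymp \mathcal{Y},
$$
where the last step is coarse surjectivity of $f^F$. Hence $\operatorname{dom}_1(F(C)) \asymp \mathcal{Y}$, meaning $F(C)$ is faithful. The proof is essentially an assembly of previously established results and contains no substantive obstacle; the only point that warrants a brief verification is the elementary observation that asymptotic relations in $Y \times X$ have asymptotic $\pi_Y$-projections, which follows since every entourage of the product coarse structure projects to an entourage in $Y$.
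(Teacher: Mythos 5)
Your proof is correct and follows essentially the same route as the paper: both arguments identify $\operatorname{dom}_1(F(C))$ with the $\pi_Y$-projection of the approximate relation of $U(C)$ via Corollary \ref{corollary: Nonfaithful rigidity}, and then use the module-independence/closeness statement of Theorem \ref{*-functor => coarse emb} (whose $*$-equivalence clause already encodes the comparison with a faithful module in the image) to conclude that this projection is coarsely dense in $\mathcal{Y}$. The auxiliary observation that asymptotic relations in $Y \times X$ have asymptotic $\pi_Y$-projections is indeed elementary and is used implicitly in the paper as well.
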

	\begin{proof}
		The closeness class of asymptotic relations is independent of the choice of a faithful $\mathcal{X}$-module $C$. For two faithful $\mathcal{X}$-modules $C$ and $D$ let $f^C$ and $f^D$ denote the approximate relations corresponding to $U(C)$ and $U(D)$. One has
		$$
			\operatorname{dom}_0(F(C)) \asymp \pi_2(f^C) \asymp \pi_2(f^D) \asymp \operatorname{dom}_0(F(D)).
		$$
		Since there exists a faithful $\mathcal{X}$-module $C$ whose image $F(C)$ is a faithful $\mathcal{X}$-module, the statement follows.
	\end{proof}
	
	In the case where $\mathcal{X}$ has a countable number of connected components, Lemma \ref{cardinality of discrete partition for countably generated LFCM spaces} enables us to dispense with the requirement that approximable categories contain faithful modules, as the approximable category of separable coarse modules already includes a faithful module.
	
	\begin{corollary} \label{ff *-funct => coarse emb (2)}
		Let $\mathcal{X}$ and $\mathcal{Y}$ be countably generated LFCM spaces with at most countably many connected components, and let $F \colon \mathcal{A}_{\aleph_0}(\mathcal{X}) \to \mathcal{A}_{\aleph_0}(\mathcal{Y})$ be a fully faithful $*$-functor. Then there exists a coarse embedding $f^F \colon \mathcal{X} \to \mathcal{Y}$, unique up to closeness, such that for any faithful $\mathcal{X}$-module $C$, $\delta > 0$, and sufficiently large entourages $F \in \mathcal{E}^Y$ and $E \in \mathcal{E}^X$, the approximate relation $f^{U(C)}_{\delta, F, E}$ is close to $f^F$. Furthermore, if $F$ is a $*$-equivalence of categories, then $f^F$ is a coarse equivalence.
	\end{corollary}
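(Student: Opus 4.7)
The plan is to deduce this corollary directly from Theorem~\ref{*-functor => coarse emb} by verifying the existence hypothesis on faithful modules using the extra assumption on the cardinality of the set of connected components. I would first apply Lemma~\ref{cardinality of discrete partition for countably generated LFCM spaces} separately to $\mathcal{X}$ and to $\mathcal{Y}$. Since each is a countably generated LFCM space with at most countably many connected components, that lemma produces a faithful $\mathcal{X}$-module in $\mathcal{A}_{\aleph_0}(\mathcal{X})$ and a faithful $\mathcal{Y}$-module in $\mathcal{A}_{\aleph_0}(\mathcal{Y})$.

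With these faithful modules secured, the hypothesis of Theorem~\ref{*-functor => coarse emb}, specialised to the convention $\mathcal{A}(\cdot) = \mathcal{A}_{\aleph_0}(\cdot)$, is satisfied. The plan is then simply to invoke that theorem on the given full and faithful $*$-functor $F \colon \mathcal{A}_{\aleph_0}(\mathcal{X}) \to \mathcal{A}_{\aleph_0}(\mathcal{Y})$. This yields the claimed coarse embedding $f^F$, unique up to closeness, together with the description of $f^F$ as the closeness class of the approximate relations $f^{U(C)}_{\delta, F, E}$ associated with any faithful $\mathcal{X}$-module $C$, for $\delta > 0$ and sufficiently large entourages $F \in \mathcal{E}^Y$ and $E \in \mathcal{E}^X$. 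The concluding assertion, that $f^F$ is a coarse equivalence whenever $F$ is a $*$-equivalence of categories, transfers verbatim from the corresponding statement in Theorem~\ref{*-functor => coarse emb}.

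There is no substantive obstacle in this argument: the corollary is essentially a streamlined restatement of Theorem~\ref{*-functor => coarse emb} under a mild countability assumption on connected components which, via Lemma~\ref{cardinality of discrete partition for countably generated LFCM spaces}, automatically supplies the faithful separable modules whose existence had to be hypothesised in the more general statement. The only point worth emphasising in writing up the argument is the compatibility of ranks, namely that the faithful modules produced by Lemma~\ref{cardinality of discrete partition for countably generated LFCM spaces} indeed lie in the category $\mathcal{A}_{\aleph_0}$, which is exactly the content of that lemma.
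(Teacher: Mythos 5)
Your proposal is correct and matches the paper's own route exactly: the paper states (in the paragraph preceding the corollary) that Lemma~\ref{cardinality of discrete partition for countably generated LFCM spaces} supplies faithful modules in $\mathcal{A}_{\aleph_0}(\mathcal{X})$ and $\mathcal{A}_{\aleph_0}(\mathcal{Y})$ under the countability assumption on connected components, after which Theorem~\ref{*-functor => coarse emb} applies verbatim. Nothing further is needed.
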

	
	\section{Structure of fully faithful $*$-functors}
	
	The previous section concluded that any full and faithful $*$-functor $F \colon \mathcal{A}(\mathcal{X}) \to \mathcal{A}(\mathcal{Y})$ between approximable categories of countably generated LFCM spaces that contain faithful modules induces a unique up to closeness coarse embedding $f^F \colon \mathcal{X} \to \mathcal{Y}$. In this section, we investigate the relation between $F$ and a $*$-functor $f^F_*$ induced by $f^F$. Recall (\cite[Chapter 8, section 3]{Richter_2020}) that an additive functor $F$ induces a natural isomorphism $\alpha$ whose components are given by
	$$  
		\alpha_{C,D} \colon F(C) \oplus F(D) \to F(C \oplus D), \qquad \alpha_{C,D}= F(i_C) \pi_{F(C)} + F(i_D) \pi_{F(D)}
	$$  
	for all objects $C$ and $D$. We can express the components of $\alpha$ in terms of unitaries that implement the $*$-isomorphisms between algebras of approximable operators.

	\begin{lemma} \label{lemma: additivity nat.iso => obstruction to unitaries}
		Let $\mathcal{X}$ and $\mathcal{Y}$ be LFCM spaces, $F \colon \mathcal{A}(\mathcal{X}) \to \mathcal{A}(\mathcal{Y})$ be a fully faithful $*$-functor and $C, D$ be coarse $\mathcal{X}$-modules. Let $\alpha$ be a natural isomorphism provided by additivity of $F$, then there are central unitaries $u \in \operatorname{End}_{\mathcal{A}(\mathcal{X})}(C)$, and $v \in \operatorname{End}_{\mathcal{A}(\mathcal{X})}(D)$ such that
		$$
			\alpha_{C, D} = U(C \oplus D) (U(C) \oplus U(D))^* (u \oplus v).
		$$
	\end{lemma}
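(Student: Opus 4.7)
The plan is to compare the two unitary isomorphisms $F(C) \oplus F(D) \to F(C \oplus D)$ --- the natural isomorphism $\alpha_{C,D}$ and the unitary $V := U(C \oplus D)(U(C) \oplus U(D))^*$ --- through their defect $w := \alpha_{C,D}^{-1} V$. I would show that $w$ is a block-diagonal unitary whose entries are central, and then transport those entries back to $\operatorname{End}_{\mathcal{A}(\mathcal{X})}(C)$ and $\operatorname{End}_{\mathcal{A}(\mathcal{X})}(D)$ via the unitaries $U(C)$ and $U(D)$ to produce $u$ and $v$.

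First, I would show that conjugation by $\alpha_{C,D}$ and by $V$ agree on the two diagonal projections $p_C := i_{F(C)} \pi_{F(C)}$ and $p_D := i_{F(D)} \pi_{F(D)}$. Using the explicit formula for $\alpha_{C,D}$ together with its adjoint $\alpha_{C,D}^{-1} = i_{F(C)}F(\pi_C) + i_{F(D)}F(\pi_D)$ (which is an adjoint because $F$ is a $*$-functor and $i_C^* = \pi_C$), a direct calculation gives $\alpha_{C,D}\,p_C\,\alpha_{C,D}^{-1} = F(i_C\pi_C)$. On the other side, because $U(C) \oplus U(D)$ respects the direct-sum decomposition, $V\,p_C\,V^* = U(C \oplus D)\,(i_C\pi_C)\,U(C \oplus D)^* = F(i_C\pi_C)$. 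The analogous equality holds for $p_D$. Consequently $w$ commutes with $p_C$ and $p_D$, so it decomposes as $w = w_C \oplus w_D$ for unitaries $w_C \in \operatorname{End}_{\mathcal{A}(\mathcal{Y})}(F(C))$ and $w_D \in \operatorname{End}_{\mathcal{A}(\mathcal{Y})}(F(D))$.

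Next, I would establish centrality of $w_C$ and $w_D$. For every $a \in \operatorname{End}_{\mathcal{A}(\mathcal{X})}(C)$, the same style of computation shows that both $\alpha_{C,D}$ and $V$ conjugate $F(a) \oplus 0 \in \operatorname{End}_{\mathcal{A}(\mathcal{Y})}(F(C) \oplus F(D))$ to $F(i_C a \pi_C) \in \operatorname{End}_{\mathcal{A}(\mathcal{Y})}(F(C \oplus D))$; on the $V$-side this uses $F(a) = U(C)\,a\,U(C)^*$ and on the $\alpha_{C,D}$-side it is the explicit formula. Hence $w$ commutes with $F(a) \oplus 0$, which forces $w_C F(a) = F(a) w_C$. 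Since $F$ is full and faithful, $a \mapsto F(a)$ surjects onto $\operatorname{End}_{\mathcal{A}(\mathcal{Y})}(F(C))$, so $w_C$ is central; symmetrically $w_D$ is central.

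Finally, I would set $u := U(C)^* w_C^* U(C)$ and $v := U(D)^* w_D^* U(D)$. Since $\operatorname{Ad}_{U(C)}$ is a $*$-isomorphism $\operatorname{End}_{\mathcal{A}(\mathcal{X})}(C) \xrightarrow{\sim} \operatorname{End}_{\mathcal{A}(\mathcal{Y})}(F(C))$ sending $u$ to $w_C^*$, the element $u$ is a central unitary in $\operatorname{End}_{\mathcal{A}(\mathcal{X})}(C)$, and likewise for $v$. Unwinding $V = \alpha_{C,D}\, w$ and inserting the identity $w^{-1} = (U(C) \oplus U(D))(u \oplus v)(U(C) \oplus U(D))^*$ then rearranges to the equality claimed in the lemma. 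The only real delicacy is bookkeeping: ensuring that the block-matrix manipulations with the implicit identification via $U(C) \oplus U(D)$ match the precise placement of $u \oplus v$ in the statement; once the conjugation identities on the projections $p_C$ and $p_D$ are in place, everything else is algebra.
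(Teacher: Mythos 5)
Your proof is correct and is essentially the paper's argument: both compare $\alpha_{C,D}$ with $U(C\oplus D)(U(C)\oplus U(D))^*$, show the resulting defect unitary is block-diagonal with central diagonal blocks (forced by commutation with the projections and with all operators of the form $F(a)\oplus 0$, using fullness), and transport the blocks back through $U(C)$ and $U(D)$; your $w$ is just the adjoint-conjugate, via $U(C)\oplus U(D)$, of the operator $V$ the paper analyses on the $C\oplus D$ side. The bookkeeping caveat you raise about the exact placement of $u\oplus v$ is real but harmless, and the paper itself only asserts the identity ``up to central unitaries.''
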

	\begin{proof}
		Since $\alpha$ is a natural transformation, for any approximable operators $t \colon C \to C'$, and $h \colon D \to D'$ the following diagram commutes
		$$
			\begin{tikzcd}
				F(C') \oplus F(D') \arrow{d}{\alpha_{C,D}} \arrow{rr}{F(t) \oplus F(h)} && F(C') \oplus F(D') \arrow{d}{\alpha_{C',D'}} \\
				F(C \oplus D)  \arrow{rr}{F(t \oplus h)} && F(C' \oplus D').
			\end{tikzcd}
		$$
		Take $C = C'$, $D = D'$, and define the unitary $V$ as the composition
		$$
			\begin{tikzcd}
				V \colon C \oplus D \arrow{rr}{U(C) \oplus U(D)} && F(C) \oplus F(D) \arrow{r}{\alpha_{C, D}} & F(C \oplus D) \arrow{rr}{U(C \oplus D)^*} && C \oplus D.
			\end{tikzcd}
		$$
		It is straightforward from the diagram that $V$ satisfies $\operatorname{Ad}_V(t \oplus h) = t \oplus h$. It follows that the diagonal entries of the matrix representation of $V$ for the decomposition $C \oplus D$ are central unitaries in $\operatorname{End}_{\mathcal{A}(\mathcal{X})}(C)$ and $\operatorname{End}_{\mathcal{A}(\mathcal{X})}(D)$ respectively. Simple computations show that the off-diagonal entries of $V$ are zeros. Therefore $V$ is a diagonal matrix whose entries are central unitaries, but then
		$$
			\alpha_{C,D} = U(C \oplus D) V (U(C) \oplus U(D))^*.
		$$
		The equality up to central unitaries follows.
	\end{proof}
	
	Generally, the morphisms $\alpha_{C, D}$ need not be of controlled propagation, as the following example shows.
	
	\begin{example}
		Let $\mathcal{X}$ be a LFCM space. For every coarse $\mathcal{X}$-module $C$, pick an approximable unitary $U(C) \colon C \to C$. Define a functor $F \colon \mathcal{A}(\mathcal{X}) \to \mathcal{A}(\mathcal{X})$ as identity on objects, and for every approximable operator $t \colon C \to D$ define $F(t) \colon C \to D$ as
		$$
			F(t) = U(D) t U(C)^*
		$$
		By Lemma \ref{lemma: additivity nat.iso => obstruction to unitaries} the components of the natural isomorphism $\alpha \colon F \oplus F \Rightarrow F$ are given by 
		$$
			\alpha_{C,D} = U(C \oplus D)(U(C) \oplus U(D))^*.
		$$
		In view of the arbitrary choice of the unitaries $U(C)$, the components of $\alpha$ need not be of controlled propagation.
	\end{example}
	
	The general form of full and faithful $*$-functors closely resembles the previous example. In what follows, a central unitary $u$ in a unital $C^*$-algebra $A$ refers to a unitary element that belongs to the centre of $A$.
	
	\begin{theorem} \label{theorem: structure of fully faithful $*$-functors}
		Let $\mathcal{X}$ and $\mathcal{Y}$ be LFCM spaces, $F \colon \mathcal{A}(\mathcal{X}) \to \mathcal{A}(\mathcal{Y})$ be a full and faithful $*$-functor. For coarse $\mathcal{X}$-module $C$ let $U(C) \colon H_C \to H_{F(C)}$ denote the unitary that implements the $*$-isomorphism between algebras of approximable operators on $C$ and $F(C)$ induced by $F$. Then for every pair of $\mathcal{X}$-modules $C$ and $D$ and an approximable operator $t \colon C \to D$ there are central unitaries $u \in \operatorname{End}_{\mathcal{A}(\mathcal{X})}(C)$, and $v \in \operatorname{End}_{\mathcal{A}(\mathcal{X})}(D)$ such that
		$$
			F(t) = U(D)^* v^* t u^* U(C).
		$$
	\end{theorem}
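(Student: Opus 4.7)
The plan is to embed the morphism $t \colon C \to D$ as an off-diagonal entry of an endomorphism of the biproduct $C \oplus D$, then use that the restriction of $F$ to $\operatorname{End}_{\mathcal{A}(\mathcal{X})}(C \oplus D)$ is the $*$-isomorphism spatially implemented by $U(C \oplus D)$. The mismatch between $U(C \oplus D)$ and the ``diagonal'' unitary $U(C) \oplus U(D)$ is precisely what Lemma \ref{lemma: additivity nat.iso => obstruction to unitaries} controls, and this will account for the central unitaries $u$ and $v$.

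Concretely, I set $T := i_D \circ t \circ \pi_C \in \operatorname{End}_{\mathcal{A}(\mathcal{X})}(C \oplus D)$, i.e.\ the endomorphism represented by the matrix with $t$ as its unique nonzero entry in the $(D,C)$-position. Then $t = \pi_D \circ T \circ i_C$, so by functoriality
$$
F(t) = F(\pi_D) \circ F(T) \circ F(i_C).
$$
The additivity isomorphism $\alpha_{C,D} \colon F(C) \oplus F(D) \to F(C \oplus D)$ lets me rewrite $F(i_C) = \alpha_{C,D} \circ i_{F(C)}$ and $F(\pi_D) = \pi_{F(D)} \circ \alpha_{C,D}^{*}$; here $\alpha_{C,D}$ is unitary in the $C^{*}$-category, as can be checked directly from its defining formula $\alpha_{C,D} = F(i_C)\pi_{F(C)} + F(i_D)\pi_{F(D)}$ together with $F$ being a $*$-functor and $i_{C}^{*} = \pi_C$, $i_{D}^{*} = \pi_D$. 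Since $T \in \operatorname{End}_{\mathcal{A}(\mathcal{X})}(C \oplus D)$, the spatial implementation of Theorem \ref{theorem: spatial implementation} gives $F(T) = U(C \oplus D) \circ T \circ U(C \oplus D)^{*}$.

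Invoking Lemma \ref{lemma: additivity nat.iso => obstruction to unitaries} to express $\alpha_{C,D}$ as a product involving $U(C \oplus D)$, $U(C) \oplus U(D)$, and a block-diagonal central unitary $u \oplus v$, the occurrences of $U(C \oplus D)$ telescope against its adjoint, and $F(t)$ reduces to a sandwich of $T$ between $U(C) \oplus U(D)$ (and its adjoint) and $u \oplus v$ (and its adjoint), bracketed by $\pi_{F(D)}$ on the left and $i_{F(C)}$ on the right. Since $T$ has a single nonzero block in the $(D,C)$-position and the remaining factors are block-diagonal, an elementary matrix computation isolates that block and yields an expression of the form $U(D) \circ v^{*} \circ t \circ u \circ U(C)^{*}$; replacing $u$ by $u^{*}$ (which is again a central unitary) gives the claimed formula.

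The main obstacle is purely bookkeeping: one must carefully track on which side each central unitary ends up, and verify that $u$ and $v$ do not leak across the two summands during the computation. This is guaranteed by the block-diagonal form of $u \oplus v$ provided by Lemma \ref{lemma: additivity nat.iso => obstruction to unitaries}, which itself follows from applying naturality of $\alpha$ to diagonal endomorphisms of $C \oplus D$ and observing that the resulting conjugation preserves the matrix decomposition entrywise.
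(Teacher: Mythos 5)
Your proposal is correct and follows essentially the same route as the paper: embedding $t$ as the off-diagonal entry $i_D t \pi_C$ of an endomorphism of $C \oplus D$, applying the spatial implementation to $\operatorname{End}_{\mathcal{A}(\mathcal{X})}(C \oplus D)$, and using Lemma \ref{lemma: additivity nat.iso => obstruction to unitaries} to trade $U(C \oplus D)$ for $U(C) \oplus U(D)$ at the cost of block-diagonal central unitaries. The only remark is that your final expression $U(D) v^* t u\, U(C)^*$ actually matches the computation in the paper's proof (and typechecks with $U(C)\colon H_C \to H_{F(C)}$), whereas the formula as printed in the theorem statement appears to have the unitaries on the wrong sides; this is a discrepancy internal to the paper, not a gap in your argument.
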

	\begin{proof}
		Let $t \colon C \to D$ be an approximable operator. It can be seen as an operator $\hat{t} \colon C \oplus D \to C \oplus D$ given by $\hat{t} = i_D t \pi_C$. Note that
		$$
			\begin{aligned}
				F(t) 	& =  F(\pi_D) F(\hat{t}) F(i_C) \\
						& = F(\pi_D) U(C \oplus D) \hat{t} U(C \oplus D)^* F(i_C) \\
					 	& \approx  F(\pi_D) \alpha_{C,D} (U(C) \oplus U(D)) \hat{t} (U(C) \oplus U(D))^* \alpha_{C,D}^* F(i_C) \\
					 	& = \pi_{F(D)} (U(C) \oplus U(D)) \hat{t} (U(C) \oplus U(D))^* i_{F(C)} \\
					 	& = U(D) t U(C)^*,
			\end{aligned}
		$$
		where $\approx$ denotes equality up to central unitaries. The conclusion follows.
	\end{proof}
	
	\begin{example}
		Let $(X,d)$ be a metric space. Since its metric coarse structure $\mathcal{E}_d$ is coarsely connected, it comprises a single connected component. Let $C$ be a coarse $\mathcal{X}$-module. By the result of \cite{MVmodules}, the algebra of approximable operators $\operatorname{End}_{\mathcal{A}(\mathcal{X})}(C)$ contains the algebra of compact operators on the underlying Hilbert space $H_C$. Consequently, the only central unitaries in $\operatorname{End}_{\mathcal{A}(\mathcal{X})}(C)$ are of the form $\alpha 1$ for some $\alpha$ with $|\alpha| = 1$. For a non-connected LFCM space $\mathcal{X}$ with connected components $\{X_i\}_{i \in J}$, any central unitary $u \in \operatorname{End}_{\mathcal{A}(\mathcal{X})}(C)$ is given by  
		$$
			\operatorname{SOT-} \sum_{i \in J} \alpha_i \mathbbm{1}_{X_i},
		$$  
		for some complex numbers $\{\alpha_i\}_{i \in J}$ of modulus one.

	\end{example}
	
	Theorem \ref{theorem: structure of fully faithful $*$-functors} and Lemma \ref{lemma: additivity nat.iso => obstruction to unitaries} force us to deal with morphisms modulo central unitaries. We introduce certain decorative objects to make the results more readable.
	
	\begin{definition}
		Let $\mathcal{X}$, $\mathcal{Y}$ be LFCM spaces, $C_X$ and $C_Y$ be coarse modules. Two bounded operators $t, s \colon H_{C_X} \to H_{C_Y}$ are said to be \emph{equivalent modulo central unitaries} (in symbols, $t \cong_{c} s$) if there are central unitaries $u \in \operatorname{End}_{\mathcal{A}(\mathcal{X})}(C_X)$ and $v \in \operatorname{End}_{\mathcal{A}(\mathcal{Y})}(C_Y)$ such that $t = vsu$.
	\end{definition}
	
	It is easy to see that for any coarse $\mathcal{X}$-modules $C$ and $D$, the relation $\cong_u$ is an equivalence relation on the set of approximable operators from $C$ to $D$. More can be said, recall (\cite[Chapter 2, section 8]{maclane}) that an equivalence relation on the Hom-sets of a category is said to be a congruency if it respects the composition.
	
	\begin{lemma}
		The relation $\cong_u$ is a congruency on the approximable category
	\end{lemma}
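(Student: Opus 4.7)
The plan is to verify the two conditions defining a congruence on a category: that $\cong_u$ is an equivalence relation on each Hom-set (which the authors already indicate is routine), and that $\cong_u$ is compatible with composition. The substantive point is the compatibility with composition, and the essential ingredient is that central unitaries can be transported through approximable operators.

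First I would recall from the Remark in Section 1.4 that the central unitaries in $\operatorname{End}_{\mathcal{A}(\mathcal{X})}(C)$ are precisely the SOT-sums
$$
u = \operatorname{SOT-}\sum_{k \in J} \alpha_k \mathbbm{1}^C_{X_k},
$$
indexed by the coarse connected components $\{X_k\}_{k \in J}$ of $\mathcal{X}$, with $|\alpha_k|=1$. To each such $u \in \operatorname{End}_{\mathcal{A}(\mathcal{X})}(C)$ I associate the "same" central unitary $u' \in \operatorname{End}_{\mathcal{A}(\mathcal{X})}(D)$ on any other $\mathcal{X}$-module $D$, namely $u' = \operatorname{SOT-}\sum_k \alpha_k \mathbbm{1}^D_{X_k}$. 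I then record the key transport lemma: for any approximable $s \colon C \to D$ one has $s u = u' s$. For $s$ of controlled propagation this follows because $\mathbbm{1}^D_{X_k} s \mathbbm{1}^C_{X_\ell} = 0$ whenever $k \neq \ell$ (as $X_k \times X_\ell$ is not an entourage), so $s \mathbbm{1}^C_{X_k} = \mathbbm{1}^D_{X_k} s \mathbbm{1}^C_{X_k}$; summing in the strong operator topology and approximating arbitrary approximable operators by ones of controlled propagation extends the identity.

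With this transport property in hand, the congruence property is a short manipulation. Suppose $t_1, t_2 \colon C \to D$ with $t_2 = v_1 t_1 u_1$ for central unitaries $u_1 \in \operatorname{End}(C)$, $v_1 \in \operatorname{End}(D)$, and $s_1, s_2 \colon D \to E$ with $s_2 = v_2 s_1 u_2$ for central unitaries $u_2 \in \operatorname{End}(D)$, $v_2 \in \operatorname{End}(E)$. Then
$$
s_2 t_2 = v_2 s_1 (u_2 v_1) t_1 u_1.
$$
Since $u_2$ and $v_1$ are both central in $\operatorname{End}(D)$, their product $w := u_2 v_1$ is again a central unitary in $\operatorname{End}(D)$. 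Applying the transport lemma to $s_1$ yields $s_1 w = w' s_1$ with $w' \in \operatorname{End}(E)$ central, so
$$
s_2 t_2 = (v_2 w') s_1 t_1 u_1.
$$
The element $v_2 w'$ is a product of central unitaries in $\operatorname{End}(E)$, hence central unitary, and $u_1$ is central unitary in $\operatorname{End}(C)$. Thus $s_2 t_2 \cong_u s_1 t_1$, which establishes composition-compatibility.

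The only step that requires any genuine care is the transport lemma, and its proof reduces to two observations: that central unitaries are supported on the diagonal of each connected component, and that approximable operators respect the decomposition into connected components. The latter is where the definition of approximability (rather than mere boundedness) is used, via a density argument from controlled-propagation operators. Everything else — reflexivity, symmetry, and transitivity of $\cong_u$, and the bookkeeping above — is formal.
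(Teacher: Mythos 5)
Your proposal is correct and follows essentially the same route as the paper: the key step in both is the transport identity $tu = u't$ for approximable $t$ and central unitaries written as SOT-sums over connected components, proved by reducing to controlled-propagation operators and passing to the limit, after which compatibility with composition is formal bookkeeping.
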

	\begin{proof}
		It is enough to show that for any approximable operator $t \colon C \to D$ and a central unitary $u \colon C \to C$, there is a central unitary $v \colon D \to D$ such that $tu = vt$. As $t$ is approximable, there is a controlled propagation operator $s \colon C \to D$ such that $\|t - s\| < \varepsilon$. Let $\{X_i\}_{i \in I}$ be a decomposition of $\mathcal{X}$ into its connected component. Each $X_i$ is measurable as it is a disjoint union of elements of the discrete partition. As $s$ is of controlled propagation, one has
		$$
			\mathbbm{1}^D_{X_i} s = s \mathbbm{1}^C_{X_i}.
		$$
		Let $u \colon C \to C$ be a central unitary, then $u = \text{SOT-}\sum_i \alpha_i \mathbbm{1}_{X_i}^C$, for some complex numbers $\alpha_i$ of modulus one. Define a central unitary in $\operatorname{End}_{\mathcal{A}(\mathcal{X})}(D)$ as $v = \text{SOT-}\sum_i \alpha_i \mathbbm{1}_{X_i}^D$. It follows that 
		$$
			su = \text{SOT-}\sum_{i} s \mathbbm{1}_{X_i}^C u = \text{SOT-}\sum_{i} \alpha_i \mathbbm{1}_{X_i}^D s = vs.
		$$
		By tending $\varepsilon$ to $0$ one concludes that $vt = tu$. Now suppose that $t_1 \cong_u t_2$ and $s_1 \cong_u s_2$ for some approximable operators $t_1, t_2 \colon C \to D$, and $s_1, s_2 \colon D \to G$, then $s_1 t_1 = v_1 s_2 u_1 v_2 t_2 u_2$, for some central unitaries $v_1 \colon G \to G$, $u_1, v_2 \colon D \to D$, and $u_2 \colon C \to C$. From the previous claim, we deduce that $s_1 t_1 = u s_2 t_2 v$ for some central unitaries $u \colon G \to G$, and $v \colon C \to C$.
	\end{proof}
	
	Consider a quotient category $\mathcal{A}(\mathcal{X}) / \cong_u$ whose objects are coarse $\mathcal{X}$-modules and whose morphisms are classes of $\cong_u$-equivalence of approximable operators. Denote by $\Pi \colon \mathcal{A}(\mathcal{X}) \to \mathcal{A}(\mathcal{X}) / \cong_u$ the canonical quotient functor. Note that the quotient category is not a $C^*$-category anymore as $\cong_u$ does not preserve sums. Since a full and faithful $*$-functor $F \colon \mathcal{A}(\mathcal{X}) \to \mathcal{A}(\mathcal{Y})$ maps central unitaries to central unitaries, it follows that $\Pi \circ F$ preserves the conjugacy relation. Consequently, there is a functor $\tilde{F} \colon \mathcal{A}(\mathcal{X}) / \cong_u \to \mathcal{A}(\mathcal{Y}) / \cong_u$, which acts identically on objects and for every approximable operator $t$, the functor $\tilde{F}$ sends the class $[t]$ to $[F(t)]$. We will refer to $\tilde{F}$ as a decent of $F$.
	
	\begin{definition}
		For two full and faithful $*$-functors $F, G \colon \mathcal{A}(\mathcal{X}) \to \mathcal{A}(\mathcal{Y})$, a \emph{natural transformation modulo central unitaries} from $F$ to $G$ is a natural isomorphism $\eta \colon \tilde{F} \Rightarrow \tilde{G}$ between their decents. 
	\end{definition}
	
	Suppose given two full and faithful $*$-functors $F, G \colon \mathcal{A}(\mathcal{X}) \to \mathcal{A}(\mathcal{Y})$, and a collection of approximable unitaries $\eta_C \colon F(C) \to G(C)$ for every object $C$ of $\mathcal{A}(\mathcal{X})$ such that for every pair of coarse $\mathcal{X}$-modules $C$, and $D$ there are central unitaries $u_C \colon C \to C$ and $u_D \colon D \to D$ such that the following diagram commutes for every approximable operator $t \colon C \to D$:
	$$
		\begin{tikzcd}
			F(C) \arrow{d}{\eta_C} \arrow{rr}{F(u_Dtu_C)} & & F(D) \arrow{d}{\eta_D} \\
			G(C) \arrow{rr}{G(t)} & & G(D).
		\end{tikzcd}
	$$
	It is easy to check that the collection of $\cong_u$-equivalence classes $[\eta_C]$ defines a natural modulo central unitaries isomorphism $\eta \colon \tilde{F} \Rightarrow \tilde{G}$. We will show that for a full and faithful $*$-functor $F \colon \mathcal{A}(\mathcal{X}) \to \mathcal{A}(\mathcal{Y})$ the $*$-functor $f^F_*$ induced by a coarse embedding $f^F$ from Theorem \ref{*-functor => coarse emb} is naturally modulo central unitaries isomorphic to $F$. To proceed, we need a strengthening of Theorem \ref{theorem: Iso => unitary is coarse-like}.
	
	\begin{theorem}[\cite{MVrigidity}, Proposition 9.11]
		Let $\mathcal{X}$ and $\mathcal{Y}$ be LFCM spaces, $C_X$ and $C_Y$ be ample coarse modules. Let $U \colon H_{C_X} \to H_{C_Y}$ be a unitary that implements a $*$-isomorphism
		$$
			\phi \colon \operatorname{End}_{\mathcal{A}(\mathcal{X})}(C_X) \to \operatorname{End}_{\mathcal{A}(\mathcal{Y})}(C_Y).
		$$
		Then for every $\varepsilon > 0$ there exists a controlled relation $R \subset Y \times X$, and an operator $S \colon H_{C_X} \to H_{C_Y}$ supported on $R$ such that $\|U - S\| < \varepsilon$.
	\end{theorem}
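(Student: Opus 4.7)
The plan is to approximate $U$ column-by-column along a discrete partition $\{A_i\}_{i \in I}$ of $\mathcal{X}$, and to assemble the pieces into an operator $S$ supported on a controlled relation. The key observation is that for each bounded set $A_i$, the projection $p_i := U\mathbbm{1}_{A_i}^{C_X}U^* \in \operatorname{End}_{\mathcal{A}(\mathcal{Y})}(C_Y)$ is approximable by the hypothesis that $\operatorname{Ad}_U$ restricts to a $*$-isomorphism; any controlled-propagation approximant $\tilde s_i$ forces the image of $U\mathbbm{1}_{A_i}^{C_X}$ to be concentrated, up to a small error, in a bounded subset $B_i \subset Y$. Stitching these localized columns together yields an operator supported on a controlled relation and norm-close to $U$.

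Concretely, given $\varepsilon > 0$, fix $\delta > 0$ (to be chosen proportional to $\varepsilon$ up to a multiplicity constant) and, for each $i$, select $\tilde s_i$ of $F$-controlled propagation with $\|\tilde s_i - p_i\| < \delta$. A central technical input is to secure a single $F \in \mathcal{E}^Y$ working uniformly in $i$: this is precisely the coarse-likeness of $U$, which for ample modules can be extracted from the $*$-isomorphism property of $\operatorname{Ad}_U$ through ampleness-based constructions (bypassing the countable-generation assumption of Theorem \ref{theorem: Iso => unitary is coarse-like}). Setting $B_i := \pi_1(\operatorname{Supp}(\tilde s_i))$ and $S := \sum_i \mathbbm{1}_{B_i}^{C_Y} U \mathbbm{1}_{A_i}^{C_X}$ (SOT-convergent), one has $\operatorname{Supp}(S) \subset R := \bigcup_i B_i \times A_i$. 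That $R$ is controlled follows by applying the same uniform approximation to $\mathbbm{1}_{A_i \cup A_{i'}}^{C_X}$ whenever $A_i$ and $A_{i'}$ are $E$-close: the resulting approximant has $F$-controlled propagation whose support contains bright spots for both $A_i$ and $A_{i'}$, forcing $B_i$ and $B_{i'}$ into a common entourage of $\mathcal{Y}$ depending only on $E$ and $F$.

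The main obstacle is the operator-norm estimate $\|U - S\| < \varepsilon$. Writing $T_i := \mathbbm{1}_{B_i^c}^{C_Y} U\mathbbm{1}_{A_i}^{C_X}$, disjointness of the $A_i$'s immediately yields $T_i T_j^* = 0$ for $i \neq j$, so $\|T\|^2 = \|TT^*\| = \|\sum_i T_i T_i^*\|$. Each individual summand satisfies $\|T_i T_i^*\| \le \delta^2$, because $\tilde s_i$ localizes the range of $U\mathbbm{1}_{A_i}^{C_X}$ within $B_i$ up to error $\delta$ (one checks that $(1 - \tilde s_i) U \mathbbm{1}_{A_i}^{C_X} \approx (1 - p_i) U \mathbbm{1}_{A_i}^{C_X} = 0$, so $\|\mathbbm{1}_{B_i^c}^{C_Y} U \mathbbm{1}_{A_i}^{C_X}\| \le \delta$). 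The real difficulty is bounding the operator norm of the sum of positive operators $\sum_i T_i T_i^*$, since their ranges a priori overlap heavily in $\bigcap_i B_i^c$. I expect the resolution to proceed by a finite-multiplicity argument: the cover $\{B_i\}_{i \in I}$ of $\mathcal{Y}$ has multiplicity bounded by some $N$ depending only on $F$ and the discrete partition of $\mathcal{Y}$, so $I$ decomposes into $N$ subfamilies on each of which the $B_i$'s are pairwise disjoint; on each such subfamily the corresponding $T_i T_i^*$'s have mutually orthogonal ranges, and a Cotlar--Stein-type estimate then gives $\|T\|^2 \le N\delta^2$. Setting $\delta := \varepsilon/\sqrt{N}$ completes the bound. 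The most delicate point will be to fix $N$ before choosing the approximations so that the estimate does not become circular, which requires extracting the multiplicity from the coarse-geometric data of $\mathcal{Y}$ and the uniform entourage $F$ alone.
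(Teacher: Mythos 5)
This statement is not proved in the paper at all: it is imported verbatim from \cite{MVrigidity}, Proposition 9.11, so there is no internal argument to compare yours against, and I can only assess the proposal on its own terms. Your opening move --- decomposing $U = \operatorname{SOT-}\sum_i U\mathbbm{1}_{A_i}^{C_X}$ along a discrete partition, localising each column via approximability of $p_i = U\mathbbm{1}_{A_i}^{C_X}U^*$, and correctly observing that the error $T = \sum_i T_i$ with $T_i = \mathbbm{1}_{B_i^c}^{C_Y}U\mathbbm{1}_{A_i}^{C_X}$ satisfies $T_iT_j^* = 0$ for $i\neq j$ --- is the standard entry point for results of this type, and you have correctly identified the two pressure points. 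But neither is resolved. First, the uniform entourage $F$: you assert that coarse-likeness of $U$ ``can be extracted from the $*$-isomorphism property through ampleness-based constructions,'' with no argument. This cannot be waved through; Theorem \ref{theorem: Iso => unitary is coarse-like} needs countable generation precisely because its proof is a Baire-category argument, and replacing that input by ampleness is a substantial part of the content of the cited proposition. Tellingly, ampleness is never actually used in any step of your proof, so the theorem's main hypothesis plays no role --- a strong sign that the essential work has been assumed rather than done.

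Second, and decisively, the norm estimate fails as written. Your plan is to split $I$ into $N$ subfamilies on which the $B_i$ are pairwise disjoint and to claim that on each subfamily the $T_iT_i^*$ have mutually orthogonal ranges. This is false: the range of $T_i$ lies in $\mathbbm{1}_{B_i^c}^{C_Y}H_{C_Y}$, i.e.\ in the \emph{complement} of $B_i$, and disjointness of $B_i$ and $B_j$ says nothing about $B_i^c\cap B_j^c$, which is essentially all of $Y$. Quantitatively, writing $q_i = U\mathbbm{1}_{A_i}^{C_X}U^*$ one has
$$
	\langle f, TT^*f\rangle = \sum_i \bigl\|q_i\mathbbm{1}_{B_i^c}^{C_Y}f\bigr\|^2,
$$
and the available information --- the termwise bound $\|q_i\mathbbm{1}_{B_i^c}^{C_Y}\|\le\delta$, Bessel's identity $\sum_i\|q_if\|^2=\|f\|^2$, and the multiplicity bound $\sum_i\|\mathbbm{1}_{B_i}^{C_Y}f\|^2\le N\|f\|^2$ --- only yields an estimate of order $N\|f\|^2$, not $\varepsilon^2\|f\|^2$; there is no almost-orthogonality available to make the sum small. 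Controlling precisely this sum is where the difficulty of the proposition lies and where ampleness must enter (in \cite{MVrigidity} this is done by a genuinely different mechanism, not by a Cotlar--Stein bound on the $B_i$). As it stands the argument does not close.
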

	
	The enhancement provided by the above theorem comes at the cost of the ampleness requirement. This is not problematic in our setting due to Lemma \ref{lemma: cardinality and existence of faith/ample mods}, as demonstrated in the proof of the subsequent theorem. However, this ampleness condition may become restrictive if one considers alternative variants of approximable categories. For instance, Lemma \ref{lemma: cardinality and existence of faith/ample mods} does not apply to categories of locally finite coarse modules, i.e., coarse modules where the projections $\mathbbm{1}_A$ have finite rank for bounded subsets $A$.
	
	\begin{theorem} \label{theorem: Structure_of_faf_star_functors}
		Let $\mathcal{X}$ and $\mathcal{Y}$ be LFCM spaces, $F \colon \mathcal{A}(\mathcal{X}) \to \mathcal{A}(\mathcal{Y})$ be a full and faithful $*$-functor. Suppose that $\mathcal{A}(\mathcal{X})$ contains a faithful $\mathcal{X}$-module. Let $f^F \colon \mathcal{X} \to \mathcal{Y}$ be a coarse embedding induced by $F$, then $F$ is naturally modulo central unitaries isomorphic to $f^F_*$.
	\end{theorem}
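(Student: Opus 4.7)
The plan is to construct, for every coarse $\mathcal{X}$-module $C$, an approximable unitary $\eta_C \colon F(C) \to f^F_*(C)$ whose $\cong_u$-class depends naturally on $C$. I would exploit the fact that $H_{f^F_*(C)} = H_C$ and $f^F_*(C_0 \oplus C_1) = f^F_*(C_0) \oplus f^F_*(C_1)$ hold on the nose, so that $\eta_C$ is essentially a reinterpretation of the spatial implementation supplied by Theorem \ref{theorem: spatial implementation}. First I would fix a faithful $\mathcal{X}$-module $E$, guaranteed by hypothesis. For every $\mathcal{X}$-module $C$ the sum $C \oplus E$ is faithful, so Theorem \ref{theorem: spatial implementation} supplies a unitary $U(C \oplus E) \colon H_{C \oplus E} \to H_{F(C \oplus E)}$ implementing the induced $*$-isomorphism on endomorphism algebras, and I set
\[
\eta_C := \pi_{H_C} \circ U(C \oplus E)^* \circ F(i_C).
\]
A direct computation using $F(i_C)^* = F(\pi_C)$, the intertwining $F(i_C \pi_C) = U(C \oplus E)(i_C \pi_C)U(C \oplus E)^*$, and $\pi_C i_C = \operatorname{id}_C$ yields $\eta_C \eta_C^* = \operatorname{id}_{H_C}$ and $\eta_C^* \eta_C = \operatorname{id}_{H_{F(C)}}$, so $\eta_C$ is a Hilbert-space unitary.

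The technical heart is to show $\eta_C$ is a morphism in $\mathcal{A}(\mathcal{Y})$. I would reduce $C \oplus E$ to an ample module by tensoring with a separable Hilbert space (as in Lemma \ref{lemma: cardinality and existence of faith/ample mods}) and apply Proposition 9.11 of \cite{MVrigidity}: for every $\varepsilon > 0$ there is an operator $S_\varepsilon$ supported on a controlled relation $R_\varepsilon \subset Y \times X$ with $\|U(C \oplus E) - S_\varepsilon\| < \varepsilon$. Since the approximate relations of $U(C \oplus E)$ are close to the graph of $f^F$ by Theorem \ref{*-functor => coarse emb}, $R_\varepsilon$ is contained in an enlargement of $f^F$; a support chase then shows that, viewed as an operator $F(C \oplus E) \to f^F_*(C \oplus E)$, the adjoint $S_\varepsilon^*$ has support in $Y \times Y$ contained in a fixed $\mathcal{Y}$-entourage. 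Hence $U(C \oplus E)^*$ is approximable between $\mathcal{Y}$-modules, and so is $\eta_C$, because $F(i_C)$ is a morphism in $\mathcal{A}(\mathcal{Y})$ and $\pi_{H_C} = f^F_*(\pi_C)$ has controlled $\mathcal{Y}$-propagation.

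For naturality, let $t \colon C \to D$ be approximable and set $\hat t := i_D t \pi_C \in \operatorname{Hom}_{\mathcal{A}(\mathcal{X})}(C \oplus E, D \oplus E)$. Theorem \ref{theorem: structure of fully faithful $*$-functors} yields $F(\hat t) \cong_u U(D \oplus E)\,\hat t\, U(C \oplus E)^*$, the $\cong_u$ being witnessed by $\mathcal{X}$-central unitaries on the source and target. Crucially, because $f^F$ is a coarse embedding, distinct $\mathcal{X}$-coarsely connected components have distinct images in $\mathcal{Y}$, so these $\mathcal{X}$-central unitaries remain central when transported through $f^F_*$. Combining this with $F(\hat t) F(i_C) = F(i_D) F(t)$ and $\pi_{H_D} \hat t = t \pi_{H_C}$, I obtain
\[
\eta_D \circ F(t) = \pi_{H_D} U(D \oplus E)^* F(\hat t) F(i_C) \cong_u \pi_{H_D}\, \hat t\, U(C \oplus E)^* F(i_C) = t \circ \eta_C = f^F_*(t) \circ \eta_C.
\]
The main obstacle I foresee is the careful tracking of central unitaries arising from Lemma \ref{lemma: additivity nat.iso => obstruction to unitaries} together with the reduction from faithful to ample modules needed to invoke Proposition 9.11; the remaining geometric content is already carried by Theorem \ref{*-functor => coarse emb} and the structure theorem.
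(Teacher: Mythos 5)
Your proposal is correct and follows essentially the same route as the paper: build unitaries $\eta_C$ comparing the spatial implementation of $F$ with that of $f^F_*$, prove they are approximable by combining Proposition 9.11 of \cite{MVrigidity} (after reducing to ample modules via Lemma \ref{lemma: cardinality and existence of faith/ample mods}) with the fact that the approximate relations of $U(\cdot)$ are governed by $f^F$, and verify naturality modulo central unitaries via the structure theorem. The only notable difference is that you manufacture the implementing unitary for a general module $C$ explicitly as $\pi_{H_C}\, U(C\oplus E)^*\, F(i_C)$ for a fixed faithful $E$, and you make explicit the (correct) observation that a coarse embedding sends distinct coarsely connected components into distinct components, so $\mathcal{X}$-central unitaries push forward to $\mathcal{Y}$-central ones — a point the paper's proof leaves implicit.
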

	\begin{proof}
		Fix a measurable coarse embedding $f \colon \mathcal{X} \to \mathcal{Y}$ that represents $f^F$. For a coarse $\mathcal{X}$-module $C$ we have two isomorphisms
		$$
			\phi_C, \psi_C \colon \operatorname{End}_{\mathcal{A}(\mathcal{X})}(C) \to \operatorname{End}_{\mathcal{A}(\mathcal{Y})}(F(C)),
		$$
		where $\phi_C$ is induced by $F$, and $\psi_C$ is induced by $f_*$. Let $U(C)$, $V(C)$ be unitaries that implement $\phi_C$, $\psi_C$ resepctivelly. Consider a unitary $\eta_C = U(C)V(C)^*$. The following diagram commutes modulo central unitaries for every approximable operator $t \colon C \to D$:
		$$
			\begin{tikzcd}
				F(C) \arrow{r}{F(t)} & F(D) \\
				f_*(C) \arrow{u}{\eta_C} \arrow{r}{f_*(t)}& f_*(D) \arrow{u}{\eta_D}.
			\end{tikzcd}
		$$
		It remains to check that $\eta_C$ are approximable, since then the classes of $\cong_u$-equivalences of $\eta_C$ will define a natural modulo central unitaries isomorphism. Note that for any measurable subsets $A, B$ of $Y$, one has an equality
		$$
			\|\mathbbm{1}_B^{F(C)} \eta_C \mathbbm{1}_A^{f_*(C)}\| = \|\mathbbm{1}_B^{F(C)} U(C) V(C)^* \mathbbm{1}_A^{f_*(C)} V(C)\| = \|\mathbbm{1}_B^{F(C)} U(C) \mathbbm{1}_{f^{-1}(A)}^C\|.
		$$
		Therefore, for every $\delta > 0$, $F, E \in \mathcal{E}^Y$ there is an inclusion of approximate relations
		$$
			f^{\eta_C}_{\delta, F, E} \subset f^{U(C)}_{\delta, F, (f \times f)^{-1}(E)} \circ f^T.
		$$
		By the construction of $f^F$ from Theorem \ref{theorem: rigidity of $*$-isomorphisms}, the approximate relations corresponding to $U(C)$ are subordinate to $f$. It follows that the approximate relations corresponding to $\eta_C$ are entourages in $\mathcal{X}$. Suppose that $C$ is an ample $\mathcal{X}$-module. Then the for every $\varepsilon > 0$ there exists a controlled relation $R \subset Y \times Y$, and an operator $s \colon C \to F(C)$ supported on $R$ such that 
		$$
			\|s V(C)^* - \eta_C\| = \|s - U(C)\| < \varepsilon.
		$$
		Since $R$ is a controlled relation, and $f$ is a coarse embedding, we deduce that $R \circ E^X_{\text{disc}} \circ f^T$ is a controlled relation, in particular, the support of $s V(C)^*$ is a controlled relation. It follows that
		$$
			f^{\eta_C}_{\delta, F, E} \prec \operatorname{Supp}(s V(C)^*).
		$$
		By picking large enough gauges $F, E$, the approximate relations of $\eta_C$ become densely defined. It follows from Lemma \ref{lemma: included relations => asymptotic} that $sV(C)^*$ has controlled propagation. Thus, $\eta_C$ is approximable. Now for arbitrary coarse $\mathcal{X}$-module $C$, pick an ample coarse $\mathcal{X}$-module $\tilde{C}$ such that there is an approximable isometry $t \colon C \to \tilde{C}$. The following diagram commutes up to central unitaries:
		$$
			\begin{tikzcd}
				F(C) \arrow{r}{F(t)} & F(\tilde{C}) \\
				f_*(C) \arrow{u}{\eta_C} \arrow{r}{f_*(t)}& f_*(\tilde{C}) \arrow{u}{\eta_{\tilde{C}}}.
			\end{tikzcd}
		$$
		It follows that up to central unitaries $\eta_C$ is equal to $F(t)^* \eta_{\tilde{C}} f_*(t)$, and the latter is an approximable operator. Therefore, $\eta_C$ is approximable.
	\end{proof}
	
	Note that the only reason to introduce natural isomorphisms modulo central unitaries is the failure of commutativity of the squares
	$$
		\begin{tikzcd}
			F(C) \arrow{r}{F(t)} & F(D) \\
			C \arrow{u}{U(C)}\arrow{r}{G(t)} & D \arrow{u}{U(D)}.
		\end{tikzcd}
	$$
	Such squares always commute when $F$ is a $*$-functor induced by a coarse measurable map $f \colon \mathcal{X} \to \mathcal{Y}$. In particular, if one can pick unitaries $U(C)$ so that all the squares above commute, then one can show that $F$ and $f_*$ are naturally isomorphic.
	 
	\begin{remark}
		Theorem \ref{theorem: Structure_of_faf_star_functors} provides more than natural modulo central unitaries isomorphism. In fact, the choice of components $\eta_C$, $\eta_D$ depends only on the modules $C$ and $D$ and not on the morphism $t \colon C \to D$. 
	\end{remark}
	
	\section{Corollaries and open questions}
	
	In the final subsection, we summarise the consequences of the present work and outline certain questions that remain unanswered. We begin by revisiting the initial question of when a collection of $*$-isomorphisms between algebras of approximable operators on faithful coarse modules induces the same coarse equivalence up to closeness. 
	
	\begin{corollary} \label{corollary: initial goal}
		Let $\mathcal{X}$, $\mathcal{Y}$ be countably generated coarse spaces, $\{C_i\}_{i \in I}$ be a collection of faithful $\mathcal{X}$-modules, $\{D_i\}_{i \in I}$ be a collection of $\mathcal{Y}$-modules, and 
		$$
			\{\phi_i \colon \operatorname{End}_{\mathcal{A}(\mathcal{X})}(C_i) \to \operatorname{End}_{\mathcal{A}(\mathcal{Y})}(D_i)\}_{i \in I}
		$$
		be a collection of $*$-ismomorphisms. The following are equivalent:
		\begin{enumerate}
			\item The coarse embeddings induced by $\phi_i$ are asymptotic;
			\item There is a full and faithful $*$-functor $F \colon \mathcal{A}(\mathcal{X}) \to \mathcal{A}(\mathcal{Y})$ that induces the isomorphisms $\phi_i$.
		\end{enumerate}
	\end{corollary}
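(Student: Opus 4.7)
The plan is to prove the two directions separately. For (2) $\Rightarrow$ (1), I invoke Theorem \ref{*-functor => coarse emb} (or Corollary \ref{ff *-funct => coarse emb (2)}): the full and faithful $*$-functor $F$ produces a closeness class $[f^F]$ of coarse embeddings such that, for every faithful coarse $\mathcal{X}$-module $C$, any unitary implementing $F|_{\operatorname{End}(C)}$ has approximate relations close to $f^F$. Applying this to each $C_i$ shows that every $\phi_i$ induces the same closeness class of coarse embeddings, namely $[f^F]$.

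For (1) $\Rightarrow$ (2), my plan is to build $F$ as a twisted pushforward functor. Let $f \colon \mathcal{X} \to \mathcal{Y}$ be a measurable coarse embedding representing the common closeness class (via Proposition \ref{proposition: discritization of LFCM spaces}), and, for each $i$, fix a unitary $U_i \colon H_{C_i} \to H_{D_i}$ implementing $\phi_i$ via Theorem \ref{theorem: spatial implementation}. Assuming without loss of generality that the family $\{C_i\}_{i \in I}$ contains no repetitions (condition (1) forces the $\phi_i$ indexed by identical $C_i$ to agree modulo central unitaries, which is harmless), I set $F(C_i) := D_i$ and $F(C) := f_*(C)$ for $C \notin \{C_i\}_{i \in I}$, with intertwining unitaries $V(C) \colon f_*(C) \to F(C)$ defined by $V(C_i) := U_i$ (viewed as a $\mathcal{Y}$-to-$\mathcal{Y}$ operator $f_*(C_i) \to D_i$) and $V(C) := \operatorname{id}$ otherwise. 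Setting
$$
	F(t) := V(C') \, f_*(t) \, V(C)^{-1}
$$
for an approximable $t \colon C \to C'$ would then yield a $*$-functor, fully faithful because $f_*$ itself is fully faithful (a straightforward check using that $f$ is a coarse embedding translates supports in $\mathcal{Y}$ back to entourages in $\mathcal{X}$), together with the observation that conjugation by unitaries is a bijection on Hom-sets. A direct computation $F(t) = U_i t U_i^* = \phi_i(t)$ for $t \in \operatorname{End}(C_i)$ would then confirm that $F$ induces each $\phi_i$.

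The main obstacle is showing that each $V(C_i)$ is an approximable unitary in $\mathcal{A}(\mathcal{Y})$; otherwise the formula for $F(t)$ would not produce morphisms of the approximable category. My plan here is to essentially reprise the argument used in the proof of Theorem \ref{theorem: Structure_of_faf_star_functors}. Starting from the identity
$$
	\|\mathbbm{1}^{D_i}_B V(C_i) \mathbbm{1}^{f_*(C_i)}_A\| = \|\mathbbm{1}^{D_i}_B U_i \mathbbm{1}^{C_i}_{f^{-1}(A)}\|,
$$
the closeness of the approximate relations of $U_i$ to $f$ shows that the approximate relations of $V(C_i)$, viewed as a $\mathcal{Y}$-to-$\mathcal{Y}$ operator, are subordinate to the diagonal of $\mathcal{Y}$, hence are entourages. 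Since $V(C_i)$ implements a $*$-isomorphism between $\operatorname{End}_{\mathcal{A}(\mathcal{Y})}(f_*(C_i))$ and $\operatorname{End}_{\mathcal{A}(\mathcal{Y})}(D_i)$ (two $\mathcal{Y}$-modules whose $1$-domains both coincide with $f(\mathcal{X})$ up to asymptotic equivalence, by Corollary \ref{corollary: Nonfaithful rigidity} together with the faithfulness of $C_i$), passing to an ample dilation and invoking \cite[Proposition 9.11]{MVrigidity} would produce a norm approximation of $V(C_i)$ by an operator supported on a controlled relation; combining this with the entourage bound via Lemma \ref{lemma: included relations => asymptotic} would upgrade this operator to one of controlled propagation, thereby establishing that $V(C_i)$ is approximable.
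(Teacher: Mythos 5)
Your construction of $F$ is exactly the paper's: $F(C_i)=D_i$, $F(C)=f_*(C)$ otherwise, with morphisms conjugated by the implementing unitaries $U_i$ and the tautological unitaries implementing $f_*$, and your (2)\,$\Rightarrow$\,(1) direction is the same appeal to Theorem \ref{*-functor => coarse emb}. The paper simply asserts that this $F$ is a well-defined full and faithful $*$-functor, whereas you supply the missing verification that each $U_i \colon f_*(C_i) \to D_i$ is an approximable unitary of $\mathcal{A}(\mathcal{Y})$ by rerunning the argument of Theorem \ref{theorem: Structure_of_faf_star_functors}; the only soft spot is the phrase ``passing to an ample dilation'', since \cite[Proposition 9.11]{MVrigidity} requires both modules to be ample and, unlike in Theorem \ref{theorem: Structure_of_faf_star_functors} where the already-given functor transports the problem to an ample module along an approximable isometry, here you would still need to justify that $\operatorname{Ad}_{U_i}$ remains a $*$-isomorphism of approximable algebras after dilation.
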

	\begin{proof}
		The implication (2. $\Rightarrow$ 1.) is established by Theorem \ref{*-functor => coarse emb}. Vise-versa, let $f$ be a measurable representative of the class of coarse embeddings induced by $\phi_i$. As $f$ is a coarse embedding, the induced $*$-functor $f_*$ is full and faithful. Let $U(C_i)$ be unitaries that implement $\phi_i$, and $V(C)$ denote the unitaries that implement $f_*$ at $C$. Define a new functor $F \colon \mathcal{A}(\mathcal{X}) \to \mathcal{A}(\mathcal{Y})$ as follows. On objects, if $C$ belongs to the collection $\{C_i\}_{i \in I}$, i.e $C = C_i$ for some $i \in I$, then $F(C) = D_i$, otherwise, $F(C) = f_*(C)$. For an approximable operator $t \colon C^0 \to C^1$ define
		\begin{enumerate}
			\item If $C^0$ and $C^1$ belong to the collection $\{C_i\}_{i \in I}$, then $F(t) = U(C^1) t U(C^0)^*$;
			\item If $C^0$ belongs to the collection $\{C_i\}_{i \in I}$, and $C^1$ does not, then $F(t) = V(C^1) t U(C^0)^*$;
			\item If $C^1$ belongs to the collection $\{C_i\}_{i \in I}$, and $C^0$ does not, then $F(t) = U(C^1) t V(C^0)^*$;
			\item Otherwise, $F(t) = V(C^1) t V(C^0)^*$.
		\end{enumerate}
		It is clear that $F$ defines a full and faithful $*$-functor. 
	\end{proof}
	
	Let $\textbf{LFCM}^{\aleph_0}_{\text{emb}}$ be a category whose objects are countably generated coarse spaces and whose morphisms are closeness classes of coarse embeddings. Let $\mathcal{C}/\cong_u$ be a category whose objects are approximable categories of coarse modules of rank $\aleph_0$ and whose morphisms are equivalence classes of full and faithful $*$-functors modulo natural up to central unitaries isomorphisms.
	
	\begin{corollary} \label{corollary: cat equiv}
		The functor $\mathcal{A}_{\aleph_0} \colon \textbf{LFCM}^{\aleph_0}_{\text{emb}} \to \mathcal{C}/\cong_u$ is an equivalence of categories.
	\end{corollary}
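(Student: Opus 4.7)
The plan is to verify the three defining conditions of an equivalence of categories for the functor $\mathcal{A}_{\aleph_0}$: essential surjectivity, fullness, and faithfulness. Essential surjectivity is built into the construction of $\mathcal{C}/\cong_u$, whose objects are by definition the categories $\mathcal{A}_{\aleph_0}(\mathcal{X})$ for countably generated coarse $\mathcal{X}$, so every object lies on the nose in the image of $\mathcal{A}_{\aleph_0}$ and no argument is required beyond unravelling the definition.

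For fullness, I would take a morphism in $\mathcal{C}/\cong_u$ represented by a full and faithful $*$-functor $F \colon \mathcal{A}_{\aleph_0}(\mathcal{X}) \to \mathcal{A}_{\aleph_0}(\mathcal{Y})$. Corollary \ref{ff *-funct => coarse emb (2)} supplies a coarse embedding $f^F \colon \mathcal{X} \to \mathcal{Y}$, and Theorem \ref{theorem: Structure_of_faf_star_functors} then supplies a natural modulo central unitaries isomorphism $F \cong (f^F)_*$, so that $[F] = [(f^F)_*] = \mathcal{A}_{\aleph_0}([f^F])$. The faithful-module hypothesis required by Theorem \ref{theorem: Structure_of_faf_star_functors} is furnished by Lemma \ref{cardinality of discrete partition for countably generated LFCM spaces}, provided the spaces are restricted to those with at most countably many connected components, which is the standing assumption throughout Sections~3--4.

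For faithfulness, suppose $f, g \colon \mathcal{X} \to \mathcal{Y}$ are coarse embeddings whose pushforward functors represent the same morphism in $\mathcal{C}/\cong_u$. I would first establish that the coarse embedding $f^{f_*}$ obtained by applying Corollary \ref{ff *-funct => coarse emb (2)} to $f_*$ is close to $f$ itself: for any faithful $\mathcal{X}$-module $C$, the implementing unitary of the $*$-isomorphism induced by $f_*$ can be taken to be the identity $U_f(C) = \operatorname{id}_{H_C} \colon C \to f_*C$, and expanding the definition yields
\begin{equation*}
f^{U_f(C)}_{\delta, F, E} = \bigcup \bigl\{B \times A \mid B \text{ is } F\text{-bounded, } A \text{ is } E\text{-bounded, } \mathbbm{1}^C_{f^{-1}(B) \cap A} \neq 0\bigr\},
\end{equation*}
which Lemma \ref{lemma: included relations => asymptotic} identifies, up to closeness, with the graph of $f$ on the faithfulness domain. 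Symmetrically $f^{g_*} \sim g$. Now given a natural modulo central unitaries isomorphism $\tilde{f_*} \cong \tilde{g_*}$ realised by approximable unitaries $\eta_C \colon f_*C \to g_*C$, the approximate relations of each $\eta_C$ are entourages, so the implementing unitaries $U_f(C)$ and $\eta_C U_g(C)$ produce asymptotic approximate relations; applying Lemma \ref{lemma: included relations => asymptotic} once more yields $f^{f_*} \sim f^{g_*}$, whence $f \sim g$ as required.

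The main obstacle is the bookkeeping in the faithfulness step, specifically verifying that the closeness class of the induced coarse embedding is invariant under $\cong_u$-equivalent replacements of the inducing functor, and that pushforward along a coarse embedding $f$ is recovered as $f$ up to closeness through the approximate-relation construction. Both reductions parallel arguments already carried out in the proofs of Theorems \ref{*-functor => coarse emb} and \ref{theorem: Structure_of_faf_star_functors}, and no fundamentally new techniques are needed beyond careful application of Lemma \ref{lemma: included relations => asymptotic}.
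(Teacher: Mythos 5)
Your proposal is correct and coincides with the paper on essential surjectivity (by definition of $\mathcal{C}/\cong_u$) and on fullness (Corollary \ref{ff *-funct => coarse emb (2)} plus Theorem \ref{theorem: Structure_of_faf_star_functors}); the divergence is in the faithfulness step. There the paper argues more directly with supports: from the naturality square it deduces that $\eta_C$ implements a $*$-isomorphism $\operatorname{End}_{\mathcal{A}(\mathcal{Y})}(f_*C) \to \operatorname{End}_{\mathcal{A}(\mathcal{Y})}(g_*C)$, hence is coarse-like and, being approximable, of controlled propagation; since $\eta_C$ and $\nu_C = V(C)U(C)^*$ differ by central unitaries they have equal supports, and $\operatorname{Supp}(\nu_C) = (f\times g)(E^X_{\text{disc}})$, so this set is an entourage and $f \sim g$ follows in one line. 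Your route instead passes everything through approximate relations: you first verify $f^{f_*} \sim f$ (your displayed computation of $f^{U_f(C)}_{\delta,F,E}$ is right, and Lemma \ref{lemma: included relations => asymptotic} closes it on the faithfulness domain), then compose with the approximate relations of $\eta_C$. Your unproved assertion that the approximate relations of $\eta_C$ are entourages does hold, and in fact needs only approximability of $\eta_C$ (an $(\delta/2,E_0)$-approximant forces $f^{\eta_C}_{\delta,F,E} \subset F \circ E_0 \circ E$), which is slightly more economical than the paper's detour through coarse-likeness; the price is that you need a composition estimate for approximate relations of the kind used implicitly in the proof of Theorem \ref{theorem: Structure_of_faf_star_functors}, whereas the paper's support computation avoids it. One slip to fix: $\eta_C U_g(C)$ does not typecheck ($\eta_C$ goes from $f_*C$ to $g_*C$); you should compare $U_g(C)$ with $\eta_C U_f(C)$, which agree up to central unitaries by the naturality square, so Lemma \ref{lemma: properties of approximate relations} applies. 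With that repaired, both arguments reach the same conclusion.
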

	\begin{proof}
		The fact that $\mathcal{A}_{\aleph_0}$ is full is established in Theorem \ref{theorem: Structure_of_faf_star_functors}. It is essentially surjective by definition of $\mathcal{C}$. It remains to show that it is faithful. Suppose that for two measurable coarse embeddings $f, g \colon \mathcal{X} \to \mathcal{Y}$ one has $f_* \cong_u g_*$. In particular, for every coarse $\mathcal{X}$-module $C$ there is an approximable unitary $\eta_c \colon f_*(C) \to g_*(C)$ such that for any approximable operator $t \colon C \to C$ the following diagram commutes in $\mathcal{A}(\mathcal{Y}) / \cong_u$:
		$$
			\begin{tikzcd}
				f_*(C) \arrow{r}{f_*(t)} \arrow{d}{\eta_c} & f_*(C) \arrow{d}{\eta_c}\\
				g_*(C) \arrow{r}{g_*(t)} & g_*(C).
			\end{tikzcd}
		$$
		It follows that $\operatorname{Ad}_{\eta_c} \colon \operatorname{End}_{\mathcal{A}(\mathcal{Y})}(f_*(C)) \to \operatorname{End}_{\mathcal{A}(\mathcal{Y})}(g_*(C))$ is a $*$-isomorphism, therefore, $\eta_c$ is a coarse-like approximable unitary. By Lemma \ref{proposition: approximable, coarse-like unitary => controlled}, $\eta_c$ is of controlled propagation. Consider unitaries
		$$
			U(C) \colon C \to f_*(C), \quad V(C) \colon C \to g_*(C),
		$$
		that implement $f_*$ and $g_*$ at $C$ respectively. Let $\nu_c = V(C)U(C)^*$. Note that
		$$
			\|\mathbbm{1}^{g_*(C)}_B \nu_c \mathbbm{1}_A^{f_*(C)}\| = \|\mathbbm{1}^{g_*(C)}_B \eta_c \mathbbm{1}_A^{f_*(C)}\|,
		$$
		thus $\operatorname{Supp}(\eta_c) = \operatorname{Supp}(\nu_c)$, but $\operatorname{Supp}(\nu_c) = (f \times g)(E^X_{\text{disc}})$. It follows that $(f \times g)(E^X_{\text{disc}})$ is an entourage, so $f \sim g$.
	\end{proof}
	
	One can similarly consider the quasi-local category, whose objects are coarse $\mathcal{X}$-modules and whose morphisms are quasi-local operators (see \cite[Definition 5.14]{MVmodules} for the definition of quasi-local operators). However, we did not manage to prove the analogue of Theorem \ref{theorem: domain invariance} for coarse $\mathcal{X}$-modules isomorphic via a quasi-local unitary.
	
	\begin{question}
		Let $\mathcal{X}$, $\mathcal{Y}$ be LFCM spaces. Suppose given a full and faithful $*$-functor $F \colon \operatorname{Ql}(\mathcal{X}) \to \operatorname{Ql}(\mathcal{Y})$ between the quasi-local categories of $\mathcal{X}$ and $\mathcal{Y}$. Do the coarse equivalences induced by $*$-isomorphisms of quasi-local algebras of faithful modules coincide up to closeness? 
	\end{question}

%	\bibliographystyle{plain}
%	\bibliography{CAtRoRAoCS}

\begin{thebibliography}{10}

\bibitem{BFBFKVW_rigidity}
F.~P. Baudier, B.~M. Braga, I.~Farah, A.~Khukhro, A.~Vignati, and R.~Willett.
\newblock Uniform {R}oe algebras of uniformly locally finite metric spaces are rigid.
\newblock {\em Invent. Math.}, 230(3):1071--1100, 2022.

\bibitem{BBFVW_BijCoaExpanders}
F.~P. Baudier, B.~M. Braga, I.~Farah, A.~Vignati, and R.~Willett.
\newblock Coarse equivalence versus bijective coarse equivalence of expander graphs.
\newblock {\em Math. Z.}, 307(3):Paper No. 44, 24, 2024.

\bibitem{BF_rigidity}
B.~M. Braga and I.~Farah.
\newblock On the rigidity of uniform {R}oe algebras over uniformly locally finite coarse spaces.
\newblock {\em Trans. Amer. Math. Soc.}, 374(2):1007--1040, 2021.

\bibitem{BFVemb2019}
B.~M. Braga, I.~Farah, and A.~Vignati.
\newblock Embeddings of uniform {R}oe algebras.
\newblock {\em Comm. Math. Phys.}, 377(3):1853--1882, 2020.

\bibitem{BVFcoronas}
B.~M. Braga, I.~Farah, and A.~Vignati.
\newblock Uniform {R}oe coronas.
\newblock {\em Adv. Math.}, 389:Paper No. 107886, 35, 2021.

\bibitem{BFV_GURAR}
B.~M. Braga, I.~Farah, and A.~Vignati.
\newblock General uniform {R}oe algebra rigidity.
\newblock {\em Ann. Inst. Fourier (Grenoble)}, 72(1):301--337, 2022.

\bibitem{BV_GelfandTypeDuality}
B.~M. Braga and A.~Vignati.
\newblock A {G}elfand-type duality for coarse metric spaces with property {A}.
\newblock {\em Int. Math. Res. Not. IMRN}, (11):9799--9843, 2023.

\bibitem{BunkeEngel_HomThwBornCoarseSp}
U.~Bunke and A.~Engel.
\newblock {\em Homotopy theory with bornological coarse spaces}, volume 2269 of {\em Lecture Notes in Mathematics}.
\newblock Springer, 2020.

\bibitem{BunkeEngel_AdditiveCstarCat}
U.~Bunke and A.~Engel.
\newblock Additive {$C$}*-categories and {$K$}-theory.
\newblock \href{https://arxiv.org/abs/2010.14830}{arXiv:2010.14830}, 2021.

\bibitem{Elek1997TheO}
G.~Elek.
\newblock The {$K$}-theory of {G}romov's translation algebras and the amenability of discrete groups.
\newblock {\em Proc. Amer. Math. Soc.}, 125(9):2551--2553, 1997.

\bibitem{EM_physics}
E.~E. Ewert and R.~Meyer.
\newblock Coarse geometry and topological phases.
\newblock {\em Comm. Math. Phys.}, 366(3):1069--1098, 2019.

\bibitem{PMIHES_1981__53__53_0}
M.~Gromov.
\newblock Groups of polynomial growth and expanding maps.
\newblock {\em Inst. Hautes \'Etudes Sci. Publ. Math.}, (53):53--73, 1981.

\bibitem{Niblo1993GeometricGT}
M.~Gromov.
\newblock Asymptotic invariants of infinite groups.
\newblock In {\em Geometric group theory, {V}ol.\ 2 ({S}ussex, 1991)}, volume 182 of {\em London Math. Soc. Lecture Note Ser.}, pages 1--295. Cambridge Univ. Press, Cambridge, 1993.

\bibitem{HLS_CtBCC}
N.~Higson, V.~Lafforgue, and G.~Skandalis.
\newblock Counterexamples to the {B}aum-{C}onnes conjecture.
\newblock {\em Geom. Funct. Anal.}, 12(2):330--354, 2002.

\bibitem{maclane}
S.~Mac~Lane.
\newblock {\em Categories for the working mathematician}, volume~5 of {\em Graduate Texts in Mathematics}.
\newblock Springer-Verlag, New York, second edition, 1998.

\bibitem{MVrigidity}
D.~Martínez and F.~Vigolo.
\newblock A rigidity framework for {R}oe-like algebras.
\newblock \href{https://arxiv.org/abs/2403.13624}{arXiv:2403.13624}, 2024.

\bibitem{MVmodules}
D.~Martínez and F.~Vigolo.
\newblock Roe algebras of coarse spaces via coarse geometric modules.
\newblock \href{https://arxiv.org/abs/2312.08907}{arXiv:2312.08907}, 2024.

\bibitem{MVpaper}
D.~Martínez and F.~Vigolo.
\newblock {$C$}*-rigidity of bounded geometry metric spaces.
\newblock arXiv:2501.03128 to appear in Publications mathématiques de l'IH\'ES, 2025.

\bibitem{Mitchener_CstarCats}
P.~D. Mitchener.
\newblock {$C^*$}-categories.
\newblock {\em Proc. London Math. Soc. (3)}, 84(2):375--404, 2002.

\bibitem{Richter_2020}
B.~Richter.
\newblock {\em From categories to homotopy theory}, volume 188 of {\em Cambridge Studies in Advanced Mathematics}.
\newblock Cambridge University Press, Cambridge, 2020.

\bibitem{RoeCCaIToCRM}
J.~Roe.
\newblock Coarse cohomology and index theory on complete {R}iemannian manifolds.
\newblock {\em Mem. Amer. Math. Soc.}, 104(497):x+90, 1993.

\bibitem{RoeBook}
J.~Roe.
\newblock {\em Lectures on coarse geometry}, volume~31 of {\em University Lecture Series}.
\newblock American Mathematical Society, Providence, RI, 2003.

\bibitem{seyedhosseini2022variant}
M.~Seyedhosseini.
\newblock A variant of {R}oe algebras for spaces with cylindrical ends with applications in relative higher index theory.
\newblock {\em J. Noncommut. Geom.}, 16(2):595--624, 2022.

\bibitem{SWrigidity}
J.~\v{S}pakula and R.~Willett.
\newblock On rigidity of {R}oe algebras.
\newblock {\em Adv. Math.}, 249:289--310, 2013.

\bibitem{WW_CsA}
S.~White and R.~Willett.
\newblock Cartan subalgebras in uniform {R}oe algebras.
\newblock {\em Groups Geom. Dyn.}, 14(3):949--989, 2020.

\bibitem{WillettYu2020}
R.~Willett and G.~Yu.
\newblock {\em Higher index theory}, volume 189 of {\em Cambridge Studies in Advanced Mathematics}.
\newblock Cambridge University Press, Cambridge, 2020.

\end{thebibliography}
	
\end{document}